\documentclass[10pt, reqno]{amsart}
\usepackage{amsmath,amssymb,amsthm,graphicx}
\usepackage[left=2.5cm,right=2.2cm,top=2.5cm,bottom=3.5cm]{geometry}

\usepackage[breaklinks,hypertexnames=false]{hyperref}
\hypersetup{
	colorlinks = true, 
	urlcolor = blue, 
	linkcolor = blue, 
	citecolor = blue 
}
\usepackage{orcidlink}
\usepackage{dsfont}
\usepackage{parskip}
\makeatletter
\def\thm@space@setup{%
  \thm@preskip=\parskip \thm@postskip=0pt
}
\makeatother

\usepackage[hang,flushmargin]{footmisc}
\usepackage{footnote} 
\makesavenoteenv{tabular}

\usepackage{caption}
\captionsetup{font=footnotesize}

\let\le\leqslant
\let\ge\geqslant
\let\a\alpha

\let\eps\varepsilon

\let\mc\mathcal
\let\mb\mathbb
\newcommand{\ZZ}{\mathbb{Z}}
\newcommand{\RR}{\mathbb{R}}
\newcommand{\NN}{\mathbb{N}}

\usepackage{thmtools}
\usepackage[noabbrev,capitalize,nameinlink,nosort]{cleveref}

\newtheorem{lemma}{Lemma}[section]
\newtheorem{proposition}[lemma]{Proposition}
\newtheorem{theorem}[lemma]{Theorem}
\newtheorem{corollary}[lemma]{Corollary}

\crefname{fact}{Fact}{Facts}
\newtheorem{conjecture}[lemma]{Conjecture}

\theoremstyle{definition}
\newtheorem{definition}[lemma]{Definition}

\newtheorem{remark}[lemma]{Remark}
\newtheorem{claim}[lemma]{Claim}
\crefname{claim}{Claim}{Claims}
\newtheorem*{remark*}{Remark}

\DeclareMathOperator{\ind}{ind}
\DeclareMathOperator{\Ber}{Ber}
\DeclareMathOperator{\Slice}{Slice}
\DeclareMathOperator{\Bin}{Bin}
\DeclareMathOperator{\Poi}{Poi}
\DeclareMathOperator{\LO}{LO}

\newcommand{\prob}[1]{\mathbb{P}[#1]}
\newcommand{\probs}[2]{\mathbb{P}_{#1}[#2]}
\newcommand{\expected}[1]{\mathbb{E}[#1]}
\newcommand{\expecteds}[2]{\mathbb{E}_{#1}[#2]}

\let\originalleft\left
\let\originalright\right
\renewcommand{\left}{\mathopen{}\mathclose\bgroup\originalleft}
\renewcommand{\right}{\aftergroup\egroup\originalright}

\crefformat{equation}{#2(#1)#3}

\newcommand*{\claimproofname}{Proof of claim}
\newenvironment{claimproof}[1][\claimproofname]{\begin{proof}[#1]}{\end{proof}}

\newcommand{\binmax}[2]{\mathrm{MP}(#1,#2)}
\newcommand{\binmaxplus}[2]{\mathrm{MP}_+(#1,#2)}

\title[]{\huge E\lowercase{dge-statistics beyond }$1/e$}
\author[]{\Large A\lowercase{lexandr }G\lowercase{rebennikov and }M\lowercase{atthew }K\lowercase{wan}}

\address{Institute of Science and Technology Austria (ISTA)}
\email{\href{mailto:aleksandr.grebennikov@ist.ac.at}{\nolinkurl{aleksandr.grebennikov@ist.ac.at}}}
\address{Institute of Science and Technology Austria (ISTA)}
\email{\href{mailto:matthew.kwan@ist.ac.at}{\nolinkurl{matthew.kwan@ist.ac.at}}}

\thanks{Both authors are supported by ERC Starting Grant “RANDSTRUCT” No. 101076777.}

\begin{document}

\begin{abstract}
For integers $k$ and $\ell$, let $\ind(k, \ell)$ be the maximum proportion of $k$-vertex subsets of a large graph that induce exactly $\ell$ edges. The edge-statistics theorem (conjectured by Alon--Hefetz--Krivelevich--Tyomkyn, and proved by Kwan--Sudakov--Tran, Fox--Sauermann, and Martinsson--Mousset--Noever--Truji\'c) asserts that, for $k \to \infty$ and $0 < \ell <\binom{k}{2}$, one has $\ind(k, \ell) \le 1/e + o(1)$.

We investigate the ``stability'' of this problem: how can one improve this bound under additional assumptions on $\ell$? In particular, the edge-statistics theorem is tight when $\ell\in \{1,k-1,\binom k2-(k-1),\binom k2-1\}$; we show that for all other $\ell$, one can replace $1/e$ with a strictly smaller constant. This extends an analogous result of Ueltzen in the setting of graph inducibility. We also obtain a much stronger (and essentially optimal) upper bound on $\ind(k, \ell)$ when $\ell$ is far from a multiple of $k$, refining and extending previous bounds due to Fox and Sauermann.
\end{abstract}

\maketitle

\section{Introduction}
\label{sec:introduction}

For a $k$-vertex graph $H$, let $N(n, H)$ be the maximum possible number of $k$-vertex subsets of an $n$-vertex graph that induce a copy of $H$. A simple averaging argument shows that $N(n, H) / \binom{n}{k}$ is non-increasing in $n$, and thus one can define the \emph{inducibility} of $H$ as
\[
\ind(H) = \lim_{n \to \infty} \frac{N(n, H)}{\binom{n}{k}}.
\]
This concept was introduced by Pippenger and Golumbic~\cite{pippenger-golumbic-75} in 1975, and has been extensively studied since then.

Alon, Hefetz, Krivelevich, and Tyomkyn~\cite{alon-hefetz-krivelevich-tyomkyn-20} introduced the following variant of this notion. Let $N(n, k, \ell)$ be the maximum number of $k$-vertex subsets of an $n$-vertex graph that induce exactly $\ell$ edges. Similarly, $N(n, k, \ell) / \binom{n}{k}$ is non-increasing in $n$, and one can define the \emph{edge-inducibility} as
\[
\ind(k, \ell) = \lim_{n \to \infty} \frac{N(n, k, \ell)}{\binom{n}{k}}.
\]
Clearly, for each $k$ we have $\ind(k, 0) = \ind(k, \binom{k}{2}) = 1$ (since we can take the host graph to be empty or complete). Also note that $\ind(k, \ell) = \ind(k, \binom{k}{2} - \ell)$, since one can replace the host graph with its complement. Thus, throughout the rest of this introduction we restrict our attention to the range $1 \le \ell \le \frac{1}{2}\binom{k}{2}$.

It follows from Goodman's theorem that $\ind(3, 1) = 3/4$. However, in general it is surprisingly difficult to determine edge-inducibilities exactly. For several small values of $k$ and $\ell$, this was recently done by Bodn\'ar and Pikhurko~\cite{bodnar-pikhurko-25} via the method of flag algebras. Also, in this direction, Liu, Mubayi, and Reiher~\cite[Theorem 1.13]{liu-mubayi-reiher-23} obtained an explicit formula for $\ind(k, 1)$, for all $k$.

Alon, Hefetz, Krivelevich, and Tyomkyn suggested studying the asymptotic behaviour of edge-inducibilities as $k \to \infty$, and posed the following \emph{edge-statistics conjecture} \cite[Conjecture 1.2]{alon-hefetz-krivelevich-tyomkyn-20}: if $k$ is sufficiently large in terms of $\eps > 0$ and $1 \le \ell \le \frac{1}{2}\binom{k}{2}$, then
\begin{equation} \label{eq:edge-statistics}
\ind(k, \ell) \le 1/e + \eps.
\end{equation}
This conjecture was proved in a combination of works by Kwan, Sudakov, and Tran~\cite{kwan-sudakov-tran-19}, Fox and Sauermann~\cite{fox-sauermann-20}, and Martinsson, Mousset, Noever, and Truji\'c~\cite{martinsson-mousset-noever-trujic-19}, and was recently extended to hypergraphs by Jain, Kwan, Mubayi, and Tran~\cite{jain-kwan-mubayi-tran-25}.

To see that the bound \cref{eq:edge-statistics} is asymptotically sharp, one can consider $\ell = 1$ or $\ell = k-1$. Indeed, suppose that $k \to \infty$, and that $n$ is much larger than $k$. Then,
\begin{itemize}
    \item for $\ell = 1$, consider the random graph $\mb G(n, \binom{k}{2}^{-1})$: the number of edges in its random $k$-vertex subgraph converges to a Poisson random variable with mean $1$;
    \item for $\ell = k-1$, consider the complete bipartite graph $K_{n/k, n - n/k}$: the size of the intersection of a random $k$-subset of its vertices with the smaller part converges to a Poisson random variable with mean $1$.
\end{itemize}

In this paper we investigate the ``stability'' of this problem: how can one improve the bound \cref{eq:edge-statistics} under additional assumptions on $\ell$? Our first result states that for $\ell \notin \{1, k-1\}$, we can replace $1/e \approx 0.37$ with a strictly smaller constant.

\begin{theorem} \label{better-than-1/e}
    Let $k, \ell$ be positive integers such that $k$ is sufficiently large, and $1 \le \ell \le \frac{1}{2}\binom{k}{2}$, $\ell \notin \{1, k-1\}$. Then, 
    \[
    \ind(k, \ell) < 0.33.
    \]
\end{theorem}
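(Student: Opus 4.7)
The heuristic driving this improvement is that $1/e$ is a very special constant in this setting: $\sup_{\lambda > 0}\Pr[\mathrm{Poi}(\lambda) = j] = j^j e^{-j}/j!$ equals $1/e$ only at $j = 1$, and for $j = 2$ is already $2e^{-2} < 0.28$ (decaying like $1/\sqrt{2\pi j}$ for larger $j$ by Stirling). Hence once the edge-count $X = e(S)$ in a uniformly random $k$-subset $S$ is shown to be close to a binomial/Poisson distribution in a sufficiently strong sense, the bound $\Pr[X = \ell] < 0.33$ for $\ell \notin \{1\}$ should follow from standard point-probability estimates; the symmetric range $\ell \in [\frac{1}{2}\binom{k}{2},\binom{k}{2}-1]$ is handled by complementation.

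The first step is to refine the structural information on near-extremal graphs implicit in the existing proofs of the edge-statistics theorem (Kwan--Sudakov--Tran, Fox--Sauermann, Martinsson--Mousset--Noever--Truji\'c). Their vertex-exposure arguments essentially show that any graph $G$ with $\ind_G(k, \ell) \ge 1/e - o(1)$ must be close to one of a few structural ``templates'': a quasirandom graph $\mathbb G(n, p)$ with $p \approx \ell/\binom{k}{2}$, or a multipartite blowup whose small parts have size $\approx n/k$. In each template, $X$ decomposes as a sum of approximately-independent contributions, whose distribution can be computed explicitly.

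Then one performs a case analysis. In the quasirandom template, $X \approx \mathrm{Bin}(\binom{k}{2}, p)$, and the local central limit theorem bounds its point-probabilities by $O(1/\sqrt{p(1-p)\binom{k}{2}})$; in the Poisson regime $\binom{k}{2}p = O(1)$, the probability at $\ell$ exceeds $0.33$ only when $\ell = 1$. In the bipartite-blowup template $K_{An,(1-A)n}$, one has $X = Y(k-Y)$ with $Y \sim \mathrm{Bin}(k,A)$, and a short calculation shows the point-probability exceeds $0.33$ only when $\ell = k-1$ and $A \approx 1/k$. Multipartite blowups with $r \ge 3$ parts produce joint distributions whose point-probabilities factor into several Poisson-type terms, each strictly bounded away from $1/e$.

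The main obstacle is to make the structural dichotomy quantitatively robust, particularly for ``hybrid'' regimes where $\ell$ lies close to (but not at) a multiple of $k$: these are the natural boundary cases between templates, and one must rule out the possibility that a perturbed blowup nudges $\ind_G(k,\ell)$ back up to $0.33$. I expect to handle this by a stability argument that exploits the gap between $1/e$ and $0.33$ as slack for approximation error --- in particular, the stronger ``essentially optimal'' bound promised in the abstract for $\ell$ far from multiples of $k$ should feed directly into this step, reducing the remaining work to a bounded number of residue classes of $\ell$ modulo $k$ that can be treated individually via an explicit PMF computation against the relevant candidate multipartite structures.
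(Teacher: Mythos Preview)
Your high-level decomposition matches the paper's: use the ``far from a multiple of $k$'' bound (Theorem~\ref{far-from-multiple}) together with the dense-range bound (Theorem~\ref{bulk}) to reduce to $\ell$ lying in $O(1)$ intervals of the form $[ak-C,ak+C]$, then treat those individually. Where you diverge from the paper---and where I think the plan has a genuine gap---is in how you intend to handle those remaining cases.

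You assert that the existing proofs of the edge-statistics theorem ``essentially show'' that near-extremal host graphs are close to a short list of templates (quasirandom or multipartite blowups), and you then compute the PMF in each template. But no such structural stability theorem is proved in \cite{kwan-sudakov-tran-19,fox-sauermann-20,martinsson-mousset-noever-trujic-19}; those papers bound $\Pr[e(G[X])=\ell]$ directly via anticoncentration, without classifying $G$. Establishing a stability statement of the strength you need (robust enough that the PMF of every near-extremal graph is $o(1)$-close to one of your explicit templates) is itself a nontrivial project, and it is not clear that the slack $1/e-0.33$ is enough to absorb the approximation error for arbitrary perturbations of a blowup. The paper avoids this entirely: for $\ell$ near $ak$ with $a\ge 1$ it argues via antichains and a Littlewood--Offord-type inequality (Theorem~\ref{close-to-multiple}), never needing to know what the host graph looks like.

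The more serious issue is the case of bounded $\ell\ge 2$. Your Poisson heuristic handles the quasirandom template, and your $Y(k-Y)$ computation handles the pure bipartite template, but for small $\ell$ the supremum over host graphs is over \emph{all} graphs with $O(1)$ non-isolated vertices (after the reduction in Lemma~\ref{reduction-to-bernoulli}), not just two templates. There are infinitely many non-isomorphic such graphs, and the corresponding polynomials $f(\vec\xi(p))$ are neither binomial nor products of binomials in general. The paper deals with this by an inductive substitution argument (Theorem~\ref{reduction-to-finite}) that reduces the supremum to a finite set $\mathcal G(m)$, followed by a computer check at $p=1/3$ (Proposition~\ref{0.33}). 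Your plan has no analogue of this step, and without it the bounded-$\ell$ case remains open.
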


This extends\footnote{Strictly speaking, our result as stated does not provide a bound on $\ind(H)$ when $H$ has exactly $k-1$ edges but is not a star. However, it follows from our proof that for such graphs $\ind(H) \le 2/e^2 + o(1)$.} a recent result of Ueltzen~\cite[Theorem 1.2]{ueltzen-24} in the setting of graph inducibility, which states that for every $k$-vertex graph $H$ such that $2 \le e(H) \le \frac{1}{2}\binom{k}{2}$ and $H \not \simeq K_{1, k-1}$, one has $\ind(H) \le c + o(1)$ as $k \to \infty$ for some $c < 1/e$. Also, \cref{better-than-1/e} confirms the uniformity-two case of a general conjecture due to Jain, Kwan, Mubayi, and Tran~\cite{jain-kwan-mubayi-tran-25} on edge-inducibilities in hypergraphs (see \cref{subsec:hypergraphs}).

\subsection{Strong bounds when \texorpdfstring{$\ell$}{} is far from a small multiple of \texorpdfstring{$k$}{}} 

Another interesting direction is to study assumptions on $\ell$ that allow one to prove bounds of the form $\ind(k, \ell) = o(1)$ as $k \to \infty$. For the ``dense range'', i.e. when $\ell/k\to \infty$, an essentially optimal such bound was obtained by Kwan and Sauermann~\cite{kwan-sauermann-23} (building upon the previous work of Kwan, Sudakov, and Tran~\cite[Theorem 1.1]{kwan-sudakov-tran-19} and Alon, Hefetz, Krivelevich, and Tyomkyn~\cite[Theorem 1.5]{alon-hefetz-krivelevich-tyomkyn-20}).

\begin{theorem}[{Kwan--Sauermann~\cite[Theorem 1.3]{kwan-sauermann-23}}] \label{bulk}
    Let $k, \ell$ be positive integers such that $1 \le \ell \le \frac{1}{2}\binom{k}{2}$. Then, 
    \[
    \ind(k, \ell) = O\left(\sqrt{k/\ell}\right).
    \]
\end{theorem}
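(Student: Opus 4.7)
The plan is to establish the anti-concentration bound $\mathbb{P}[e(S) = \ell] = O(\sqrt{k/\ell})$, where $S$ is a uniformly random $k$-subset of a large $n$-vertex host graph $G$; by monotonicity of $N(n, k, \ell)/\binom{n}{k}$ in $n$, we may take $n$ much larger than $k$. The heuristic scale is that, for a graph of density $p \approx 2\ell/k^2$, the edge count $e(S)$ has standard deviation of order $k\sqrt{p} = \Theta(\sqrt{\ell})$, so the target bound $\sqrt{k/\ell}$ matches the inverse-standard-deviation scale up to a factor of $\sqrt{k}$. This slack is essential: it is what allows the bound to hold uniformly for arbitrary graphs, not just for random-like hosts where the relevant conditional distributions would be maximally spread.

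The core approach is to reveal the vertices of $S$ in a uniformly random order $v_1, \ldots, v_k$ and write $e(S) = \sum_{i=1}^{k} X_i$, where $X_i$ is the number of neighbours of $v_i$ among $\{v_1, \ldots, v_{i-1}\}$. An Esseen / Littlewood--Offord-type inequality applied to the martingale-difference-like sequence $(X_i)$ should then yield a bound of the form
\[
\mathbb{P}\Bigl[\sum_i X_i = \ell\Bigr] \le \frac{C}{\sqrt{\sum_i \delta_i^2}},
\]
where $\delta_i$ measures the conditional spread of $X_i$ given $v_1, \ldots, v_{i-1}$ (say the total-variation distance between the conditional law of $X_i$ and its integer translate). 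This reduces the theorem to showing $\sum_i \delta_i^2 = \Omega(\ell/k)$ with high probability.

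For the required spread lower bound, one leverages the identity $\sum_i X_i = \ell$: since each $X_i \ge 0$, a positive fraction of the indices $i$ must have conditional mean of order $\ell/k$, and convexity / Paley--Zygmund-type arguments then force the conditional variance of $X_i$ to be of the same order, which translates to $\delta_i^2$ of the appropriate magnitude after a local-limit comparison. The main obstacle is executing this uniformly over all possible degree sequences of $G$: for structured hosts such as disjoint unions of cliques or balanced blow-ups of a small graph, the conditional distribution of $X_i$ can be highly discrete or bimodal rather than spread, so a case analysis using Kruskal--Katona / the convexity of $x \mapsto \binom{x}{2}$, or an exploitation of how $\mathbb{E}[X_i \mid v_1, \ldots, v_{i-1}]$ varies across $i$, is needed to convert conditional variance into genuine point-mass spread. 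It is precisely this technical passage where the bound picks up its extra $\sqrt{k}$ factor relative to the Gaussian heuristic, and where the Kwan--Sauermann argument sharpens the earlier bounds of Alon--Hefetz--Krivelevich--Tyomkyn and Kwan--Sudakov--Tran to the essentially optimal form.
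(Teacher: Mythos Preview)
This theorem is not proved in the present paper; it is quoted from Kwan--Sauermann as an external input, so there is no in-paper proof to compare against. What you have written is a strategy outline rather than a proof, and the actual Kwan--Sauermann argument takes a different route.

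Your proposal has a genuine gap at its core. The Esseen/Rogozin-type bound $\mathbb{P}[\sum_i X_i = \ell] \le C/\sqrt{\sum_i \delta_i^2}$ you invoke requires independence (or a carefully stated martingale analogue that you do not supply), and in any case the entire difficulty lies in the lower bound $\sum_i \delta_i^2 = \Omega(\ell/k)$. Your justification for this is vague and partly circular: you invoke the identity $\sum_i X_i = \ell$---the very event whose probability is being bounded---to argue that many conditional means are of order $\ell/k$, and then assert that Paley--Zygmund or convexity converts conditional mean into point-mass spread. But a conditional distribution can have mean $\ell/k$ while being supported on one or two atoms (e.g.\ when $v_i$ is either in a small hub set or not), so conditional mean does not yield $\delta_i$ without further structural work. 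You acknowledge this yourself (``The main obstacle is executing this uniformly\ldots\ a case analysis\ldots\ is needed'') and then stop; that case analysis \emph{is} the theorem.

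For comparison, Kwan--Sauermann do not use a sequential-revelation martingale. Their main contribution in that paper is the optimal bound $\LO_2(m)=O(1/\sqrt{m})$ for the quadratic Littlewood--Offord problem over independent Rademacher variables, and the edge-statistics bound is deduced in that framework via the slice-to-Rademacher coupling (the same coupling this paper employs in \cref{sec:slice} and \cref{sec:appendix}). The structural analysis of the host graph is carried out inside that quadratic-polynomial setting, not through the increments $X_i$.
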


However, for $\ell = O(k)$ this bound is vacuous. Fox and Sauermann~\cite[Theorem 1.3]{fox-sauermann-20} studied edge-inducibilities in the ``very sparse'' regime $\ell \le k / 2$: they obtained an optimal bound of $O(\ell^{-1/4})$ for $\ell \le k/\log^4 k$, and a slightly suboptimal bound of $O(k^{-1/4} \log k)$ for $k/\log^4 k \le \ell \le (1/2 - o(1))k$. 

In our next result, \cref{far-from-multiple}, we refine these estimates by showing that the optimal bound $O(\ell^{-1/4})$ holds for all $\ell \le k - \sqrt{k}$. In fact, more generally, we prove that an analogous statement is true for all values of $\ell$ that are far from a multiple of $k$.

\begin{theorem} \label{far-from-multiple}
    Let $k, \ell_0, a$ be non-negative integers such that $1 \le \ell_0 \le k-1$ and $0 \le a \le \sqrt{k}$. Then,
    \[
    \ind(k, a k + \ell_0) = O\left(\max(\ell_0^{-1/4}, ((k-\ell_0)/(a+1))^{-1/2})\right).
    \]
\end{theorem}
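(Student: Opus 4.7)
The plan is to prove the two halves of the $\max$ separately, via two complementary anticoncentration arguments. Let $G$ be an $n$-vertex host graph with $n$ large, and set $q = N(n,k,\ell)/\binom{n}{k}$ for $\ell = ak + \ell_0$; the goal is to show both $q = O(((k-\ell_0)/(a+1))^{-1/2})$ and $q = O(\ell_0^{-1/4})$. The central device in both proofs is a one-vertex resampling: writing a uniformly random $k$-subset $S$ as $S' \sqcup \{v\}$, where $S'$ is a uniform $(k-1)$-subset and $v$ is uniform in $V(G)\setminus S'$, the edge count decomposes as $e(G[S]) = e(G[S']) + d_{S'}(v)$, with $d_{S'}(v) = |N(v)\cap S'|$. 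Hence
\[
q = \mathbb{E}_{S'}\!\left[\frac{|\{v \in V(G)\setminus S' : d_{S'}(v) = \ell - e(G[S'])\}|}{n-k+1}\right],
\]
and bounding $q$ is equivalent to upper-bounding the expected relative size of the largest level set of the map $v \mapsto d_{S'}(v)$.

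For the $((k-\ell_0)/(a+1))^{-1/2}$ bound, I compute the expected variance $\mathbb{E}_{S'}[\mathrm{Var}_v(d_{S'}(v))]$ directly in terms of degree and codegree moments of $G$. The constraint $\mathbb{E}[e(G[S])] \approx \ell$ pins $e(G)$ to be approximately $\binom{n}{2}\,\ell/\binom{k}{2}$, and after some algebra this forces the variance to be $\Omega((k-\ell_0)/(a+1))$; a standard pigeonhole on the integer values in the bulk interval of $d_{S'}(v)$ then yields the level-set bound $O(1/\sigma)$ times the ambient size. Extremal graphs (blow-ups of a small graph such as a disjoint union of cliques of size $a+1$, or their complements) need to be handled separately, but in those cases one can compute the anticoncentration directly from the "profile" structure of a random $k$-subset.

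For the $\ell_0^{-1/4}$ bound I adapt the iterative argument of Fox--Sauermann~\cite{fox-sauermann-20}, who proved this bound in the sparse range $\ell \le k/\log^4 k$. Their argument iterates the one-vertex resampling a bounded number of times, building up constraints on the joint distribution of $d_{S_{i-1}}(v_i)$ from the structure of $G$, and combines them with a fourth-moment anticoncentration estimate. The two extensions needed are: (a) handling $a \ge 1$ by subtracting the "bulk" contribution $\approx 2a$ of each resampled degree (coming from the edge density $\approx 2a/k$ forced by the total edge count), thereby reducing the analysis to fluctuations on the scale $\sqrt{\ell_0}$; and (b) extending the valid range of $\ell_0$ up to $k-\sqrt{k}$ by inserting the variance estimate from the first part of the proof at intermediate steps to substitute for the Poisson-like estimates that the sparse argument relied on.

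The main obstacle I expect is step (b): in the range $k/\log^4 k \le \ell_0 \le k-\sqrt{k}$ the host graph is dense enough that the local Poisson heuristic underlying Fox--Sauermann fails, but not dense enough for the Kwan--Sauermann bound $O(\sqrt{k/\ell})$ to close the gap. Interpolating between these regimes using the variance estimate of the first bound, and tracking the $a$-dependence robustly through the resulting iteration, is where most of the technical work lies.
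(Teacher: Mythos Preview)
Your variance approach to the $((k-\ell_0)/(a+1))^{-1/2}$ bound has a real gap: the claimed lower bound $\mathbb{E}_{S'}[\mathrm{Var}_v(d_{S'}(v))] = \Omega((k-\ell_0)/(a+1))$ is false in general. Take $n = 3k$, $a = 0$, any $\ell_0 \in [1, k-1]$, and let $G$ be a perfect matching. Then $d_{S'}(v) \in \{0,1\}$ for every $v$ and every $S'$, so $\mathrm{Var}_v(d_{S'}(v)) \le 1/4$ deterministically, not $\Omega(k)$. More generally, any bounded-degree graph with a large matching will have tiny $\mathrm{Var}_v(d_{S'}(v))$ while still requiring the nontrivial bound; these are not ``blow-ups of a small graph'' and cannot be absorbed into your extremal-case exception. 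Even when the variance is large, variance alone does not upper-bound the size of a \emph{specific} level set of an integer-valued function (a function equal to $0$ on a $(1-\epsilon)$-fraction of inputs and to $M$ on an $\epsilon$-fraction has variance $\approx \epsilon M^2$ but a level set of relative size $1-\epsilon$), so the pigeonhole step is also unjustified. Since the second half of your plan explicitly leans on the first (``inserting the variance estimate \ldots at intermediate steps''), the failure propagates.

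The paper's proof is structurally different and does not split the two terms of the $\max$. It sets $m = \tfrac{1}{3}\min(\sqrt{\ell_0}, (k-\ell_0)/(a+1))$ and runs a matching/vertex-cover dichotomy on $G$ (with $n = 3k$). If $G$ has a matching of size $m/2$, one applies a quadratic Littlewood--Offord inequality on the slice (\cref{large-matching-corollary}, ultimately resting on Kwan--Sauermann) to get $\Pr[e(G[X]) = \ell] \lesssim 1/\sqrt{m}$ directly---this is precisely the regime your variance argument misses. Otherwise $G$ has a vertex cover $U$ of size at most $m$; conditioning on $X \cap U$ turns the edge count into a \emph{linear} polynomial in the remaining slice variables, with coefficients in $\{0,\dots,m\}$ and constant term at most $m^2$. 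A linear slice Littlewood--Offord bound (\cref{linear-on-the-slice}) then gives $O(1/\sqrt{m})$ unless all but $m$ of these coefficients equal some common $a_0$, in which case the value is deterministically trapped in $[a_0 k - 2a_0 m,\, a_0 k + 2m^2]$. The choice of $m$ is engineered exactly so that no such interval can contain $\ell = ak + \ell_0$. No moment computation, no Fox--Sauermann iteration, and no separate treatment of the two halves of the $\max$ are needed.
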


In the range $\ell = O(k)$, this result is, in some sense, optimal (see \cref{subsec:lower-bounds} for a further discussion). On the other hand, while the bound from \cref{far-from-multiple} still holds for $a > \sqrt{k}$, in this range it gets superseded by the bound from \cref{bulk}.

\begin{figure}
\begin{center}
\includegraphics[width=0.9\textwidth]{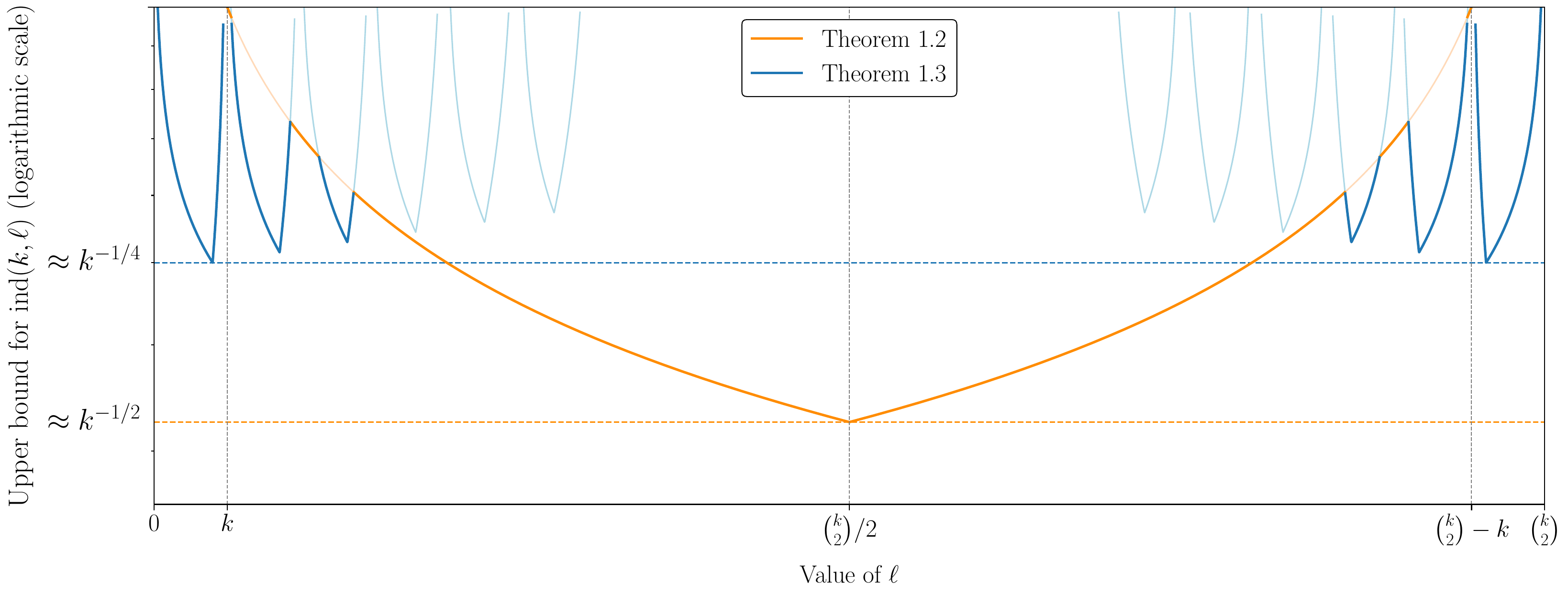}
\end{center}
\caption{This cartoon plot summarises the upper bounds on $\ind(k, \ell)$ given by \cref{bulk,far-from-multiple}.} 
\end{figure}

\subsection{Asymptotic results when \texorpdfstring{$\ell$}{} is close to a small multiple of \texorpdfstring{$k$}{}}

Together, \cref{bulk,far-from-multiple} imply that, for every $\eps > 0$, we have $\ind(k, \ell) \le \eps$ unless $\ell = ak + b$ with $0 \le a \le C$ and $-C \le b \le C$ for some $C = C(\eps)$. That is to say,  $\ind(k, \ell)$ is small unless $\ell$ is close to a multiple of $k$. In the range where $\ell$ is close to a multiple of $k$, one cannot hope for a bound that goes to zero as $k$ tends to infinity.
Indeed, taking the host graph to be the complete bipartite graph $K_{an/k, n - an/k}$, or, respectively, $K_{an/k, n-an/k}$ with additional edges between each pair of vertices in the smaller part, it is not hard to see that for each fixed $a \ge 1$ and $k \to \infty$,
\begin{equation} \label{eq:lower-bound-a^a/(e^a a!)}
\ind(k, a(k-a)) \ge \frac{a^a}{e^a a!} + o(1), \qquad \ind(k, a(k-a)+{\textstyle{\binom{a}{2}}}) \ge \frac{a^a}{e^a a!} + o(1).
\end{equation}
Our next result provides an upper bound that is sharp in the above cases. Moreover, for values of $\ell$ that do not fall in an interval of the form $[a(k-a), a(k-a)+\binom{a}{2}]$, we also obtain an asymptotic improvement over this bound.

\begin{theorem} \label{close-to-multiple}
    Fix any integer $a \ge 1$ and $C, \eps > 0$, and suppose that $k$ is sufficiently large in terms of $a, C, \eps$. Consider an integer $\ell \in [a(k-a) - C, a(k-a) + C]$.
    \begin{enumerate}
        \item[(1)] Then, 
        \[
        \ind(k, \ell) \le \frac{a^a}{e^a a!} + \eps;
        \]
        \item[(2)] If, additionally, $\ell$ does \textbf{not} satisfy $a(k-a) \le \ell \le a(k-a) + \binom{a}{2}$, then
        \[
        \ind(k, \ell) \le \frac{(a+1)^{a+1}}{e^{a+1} (a+1)!} + \eps.
        \]
    \end{enumerate}
\end{theorem}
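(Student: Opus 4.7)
Following the structural-reduction-plus-mode-estimate template used in modern edge-statistics results, let $G$ be an $n$-vertex host graph ($n \gg k$) realising $\ind(k, \ell)$ up to $\eps$, and let $S$ be a uniformly random $k$-subset of $V(G)$. The goal is to bound $\prob{e(S) = \ell}$.

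A standard compactness reduction (via graph limits) lets us replace $G$ by a blow-up of a bounded-order ``step graph'': there is a partition $V(G) = V_1 \sqcup \cdots \sqcup V_t$ with $|V_i|/n = \lambda_i$, and edges between any two classes (or within a single class) are either all present or all absent. Writing $X_i := |S \cap V_i|$, the vector $(X_1, \ldots, X_t)$ is approximately multinomial with parameters $k$ and $(\lambda_i)$, and
\[
e(S) = \sum_{i < j} a_{ij} X_i X_j + \sum_{i} b_{ii} \binom{X_i}{2},
\]
where $a_{ij}, b_{ii} \in \{0, 1\}$ encode the between- and within-class edges. The problem reduces to bounding $\prob{e(S) = \ell}$ over all choices of these parameters.

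The heart of the argument is a structural step showing that, in the near-extremal regime, at most one class $V_H$ with $\lambda_H = \Theta(1/k)$ can be ``heavy'', while the remaining classes collectively form a nearly edgeless ``low-type'' set $L$ connected to $V_H$ by a complete bipartite block. Setting $J := X_H$, the resulting edge count $e(S) = J(k - J) + e(G[S \cap V_H])$ lies in $\left[J(k-J), J(k-J) + \binom{J}{2}\right]$. Since $j(k-j)$ changes by $k - 2j - 1 = \Omega(k)$ between consecutive integers near $a$, the window $\ell \in [a(k-a) - C, a(k-a) + C]$ admits at most one integer value of $J$; in Part~(1) this value must be $j = a$, yielding
\[
\prob{e(S) = \ell} \le \prob{J = a} + o(1) \le \max_{p \in [0, 1]} \prob{\Bin(k, p) = a} + o(1) \to \frac{a^a}{e^a\, a!},
\]
where the final limit is Stirling's approximation applied to $\binom{k}{a}(a/k)^{a}(1 - a/k)^{k-a}$.

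For Part~(2), if $\ell \notin [a(k-a), a(k-a) + \binom{a}{2}]$ then $j = a$ is no longer compatible. The symmetric candidate $j = a - 1$ is ruled out because its compatibility window lies $\Omega(k)$ below $\ell$ and closing this deficit would require populating $L$ with density $\omega(1/k)$, introducing $\omega(1)$ random contributions to $e(S)$ and forcing $\prob{e(S) = \ell} = o(1)$. The only remaining feasible value is $j = a+1$, for which the same mode estimate yields $\prob{e(S) = \ell} \le \max_p \prob{\Bin(k, p) = a+1} \to (a+1)^{a+1}/(e^{a+1}(a+1)!)$. The principal obstacle is the structural reduction to a single heavy class: quantitatively ruling out configurations with multiple heavy or intermediate-type classes, or with scattered edges in $L$ that precisely match $\ell$, will require a delicate stability argument combining the $o(1)$ bounds of \cref{far-from-multiple} with a careful case analysis of small deviations from the clean two-block template.
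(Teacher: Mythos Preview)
Your proposal identifies the right target (the Poisson mode $a^a/(e^a a!)$) but has two genuine gaps that the paper's proof handles differently.

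First, the ``graph-limits compactness reduction'' to a bounded-order step graph is not justified in this regime. The relevant host graphs are sparse (after a preliminary step one has $e(G) = O(k)$ edges on $n = Rk$ vertices), so the graphon limit is identically zero and carries no information. There is no obvious compactness argument that produces a step graph with $t$ independent of $k$.

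Second, and more seriously, the ``single heavy class'' structural claim is false in general and you explicitly flag it as the principal obstacle. The paper does \emph{not} reduce to one heavy vertex. Instead it finds a small vertex cover $U$ (via a large-matching dichotomy: if $G$ has a matching of size $m$ one invokes the quadratic Littlewood--Offord bound on the slice), then extracts a set $W \subseteq U$ of vertices whose degrees are each $\Theta(k)$ and dwarf the residual edge count. The key observation is that the collection of ``good'' subsets $W' \subseteq W$ (those for which $\sum_{w \in W'} d(w)$ is within $d/t$ of $R\ell$) forms an \emph{antichain}, because consecutive good sums would differ by at least $d$. The BLYM inequality then bounds the probability that $W \cap S$ lands in this antichain, and a product--slice comparison (Ehm's theorem) converts the resulting sum into $\max_{W'} \binom{|W|}{|W'|} p^{|W'|}(1-p)^{|W|-|W'|} \le \max_{M \ge a} M^M/(e^M M!)$. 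This is how the paper avoids having to rule out multiple heavy classes: it allows arbitrarily many and still recovers the single-mode bound via BLYM.

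For Part~(2) you are also missing a nontrivial ingredient. Even once one knows $|W'| = a$, one must bound the conditional probability $\varphi(W') = \prob{e(S) = \ell \mid W \cap S = W'}$ strictly below $1$. After conditioning, $e(S) - a(k-a)$ is a quadratic polynomial in the remaining variables with $\{0,1\}$ degree-$2$ coefficients and nonzero constant term $\ell - a(k-a) - e(G[W'])$; the paper proves a separate anticoncentration result (Proposition~\ref{better-than-3/4}) showing such a polynomial equals zero with probability less than $0.725$ for small $p$. The inequality $0.725 \cdot a^a/(e^a a!) < (a+1)^{a+1}/(e^{a+1}(a+1)!)$ then finishes. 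Your sketch does not account for the possibility that the residual randomness in $S \cap (V \setminus W)$ hits $\ell$ with high conditional probability.
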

In particular, combining the lower bounds \cref{eq:lower-bound-a^a/(e^a a!)} with the upper bounds given by \cref{close-to-multiple}(1), we conclude that both $\ind(k, a(k-a))$ and $\ind(k, a(k-a)+{\textstyle{\binom{a}{2}}})$ tend to $a^a/(e^a a!)$ as $k \to \infty$.

Since $M^M/(e^M M!)$ is a decreasing function of $M \in \NN$, the bounds given by \cref{close-to-multiple} for $\ell \neq k-1$ are at most $2/e^2 + \eps$. Combined with \cref{bulk,far-from-multiple}, this implies that $\ind(k, \ell) \le 2/e^2 + o(1)$ unless $\ell = O(1)$. We make this explicit in the following theorem.

\begin{theorem} \label{2/e^2}
    Fix $\eps > 0$, and suppose that $k$ is sufficiently large in terms of $\eps$. Then, for each $\ell$ such that $60 \le \ell \le \frac{1}{2}\binom{k}{2}$ and $\ell \neq k-1$, 
    \[
    \ind(k, \ell) \le 2/e^2 + \eps.
    \]
\end{theorem}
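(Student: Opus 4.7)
The plan is to obtain \cref{2/e^2} as a case-analysis corollary of \cref{bulk,far-from-multiple,close-to-multiple}. The key algebraic input is the monotonicity fact flagged in the text, namely that $M \mapsto M^M/(e^M M!)$ is strictly decreasing on $\NN$ (easy via Stirling), so its value at $M = 2$ is $2/e^2$ and all subsequent values are at most $2/e^2$. In particular, \cref{close-to-multiple}(1) with parameter $a \ge 2$ and \cref{close-to-multiple}(2) with parameter $a = 1$ both deliver the target bound $2/e^2 + \eps$; the hypothesis $\ell \ne k-1$ is precisely what allows invoking part (2) when $a = 1$, since the exclusion interval $[a(k-a), a(k-a) + \binom{a}{2}]$ collapses to $\{k-1\}$ in this case.

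Fix $\eps > 0$ and choose large constants $A = A(\eps)$ and $B = B(\eps)$. If $\ell \ge A k$, then \cref{bulk} already yields $\ind(k, \ell) = O(1/\sqrt A) < \eps$. Otherwise, I would write $\ell = a k + \ell_0$ with $0 \le \ell_0 < k$ and $a \le A$, and split into three regimes based on where $\ell_0$ falls. (i) If $\ell_0 \ge B$ and $k - \ell_0 \ge B(a+1)$, then \cref{far-from-multiple} gives $\ind(k, \ell) < \eps$ directly. (ii) If $\ell_0 < B$, then for $a \ge 1$ the value $\ell$ lies within $O(B + A^2) = O_\eps(1)$ of $a(k - a)$, so \cref{close-to-multiple} applies with parameter $a$: part (1) when $a \ge 2$, and part (2) when $a = 1$ (which works because $\ell = k + \ell_0 > k - 1$). (iii) If $k - \ell_0 < B(a+1)$, then $\ell$ is analogously within $O_\eps(1)$ of $(a+1)(k - (a+1))$; I would apply \cref{close-to-multiple} with parameter $a + 1$, using part (1) when $a + 1 \ge 2$, and part (2) (with parameter $1$) when $a = 0$ — the latter being valid precisely because $\ell \ne k-1$ by hypothesis.

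The one remaining sub-case is $a = 0$ with $\ell_0 = \ell < B$, i.e., $60 \le \ell < B$; here \cref{close-to-multiple} does not apply (it requires $a \ge 1$), and one must fall back on \cref{far-from-multiple} with $a = 0$ to obtain $\ind(k, \ell) = O(\ell^{-1/4})$. I expect this to be the main (if mild) obstacle: one needs to check that the implicit constant in this $O(\cdot)$ is small enough that $\ell \ge 60$ already forces the right-hand side to be at most $2/e^2 + \eps$. This is the only place in the argument where the specific threshold $60$ plays an essential role; everywhere else the proof works for any $\ell$ that is a sufficiently large constant, and one must open up \cref{far-from-multiple} rather than cite it as a black box to pin down the constant for this boundary case.
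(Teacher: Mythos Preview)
Your case analysis using \cref{bulk,far-from-multiple,close-to-multiple} is exactly what the paper does for the range where $\ell$ exceeds some absolute constant $C$. The genuine gap is in your treatment of the residual regime $60 \le \ell < B = O_\eps(1)$. You propose to ``open up'' \cref{far-from-multiple} and extract a sharp enough constant, but this cannot succeed. In that proof one sets $n = 3k$ and $m = \tfrac{1}{3}\min(\sqrt{\ell_0}, (k-\ell_0)/(a+1))$; for $a = 0$ and bounded $\ell$ this gives $m \approx \tfrac{1}{3}\sqrt{\ell}$, so that at $\ell = 60$ one has $m < 3$. The eventual bound is $O(1/\sqrt{m})$, and even ignoring all implicit constants coming from \cref{large-matching-corollary} and \cref{linear-on-the-slice} (which rely on the quadratic Littlewood--Offord theorem of Kwan--Sauermann and are far from having constant~$1$), the raw quantity $1/\sqrt{m}$ already exceeds $2/e^2 \approx 0.271$ by a wide margin. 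In fact $\ell^{-1/4} \le 2/e^2$ would itself require $\ell \gtrsim 185$, and the actual bound from the proof is considerably worse than $\ell^{-1/4}$.

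The paper handles this regime with an entirely separate toolkit developed in \cref{sec:close-to-zero}: first \cref{reduction-to-bernoulli} replaces the slice by independent $\Ber(p)$ samples (paying only $o_k(1)$ when $\ell = O(1)$), then \cref{reduction-to-finite} reduces the supremum over all $\{0,1\}$-coefficient quadratics to a finite family $\mc G(m)$, and finally \cref{0.27} takes $m = 8$, $p = 0.426$, checks $\binmax{8}{p} < 0.27$, and dispatches the finitely many $g \in \mc G(8)$ via Markov's inequality: $\expected{g(\vec\xi(p))} < 16.112$, so $\prob{g(\vec\xi(p)) = \ell} < 16.112/\ell < 0.27$ once $\ell \ge 60$. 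This Markov step is precisely where the threshold $60$ originates; it has nothing to do with the constant in \cref{far-from-multiple}.
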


We believe that in fact the above statement should hold with $60$ replaced by $2$, thereby improving the bound in \cref{better-than-1/e} from $0.33$ to the optimal $2/e^2 + o(1) \approx 0.27$. However, we did not manage to accomplish this (see \cref{subsec:set-systems} for some speculations in this direction). A related conjecture in the setting of graph inducibility was proposed by Ueltzen~\cite[Conjecture 1.6]{ueltzen-24}: it states for each graph $H$ with $k$ vertices such that $2 \le e(H) \le \frac{1}{2}\binom{k}{2}$ and $H \not \simeq K_{1, k-1}$, one has $\ind(H) \le 2/e^2 + o(1)$ as $k \to \infty$. Ueltzen proved this conjecture for graphs with $\Omega(k)$ non-isolated vertices \cite[Theorem 2.2]{ueltzen-24}, and \cref{2/e^2} can be used to confirm it for every graph with at least $60$ edges.

\begin{figure}
\begin{center}
\includegraphics[width=0.8\textwidth]{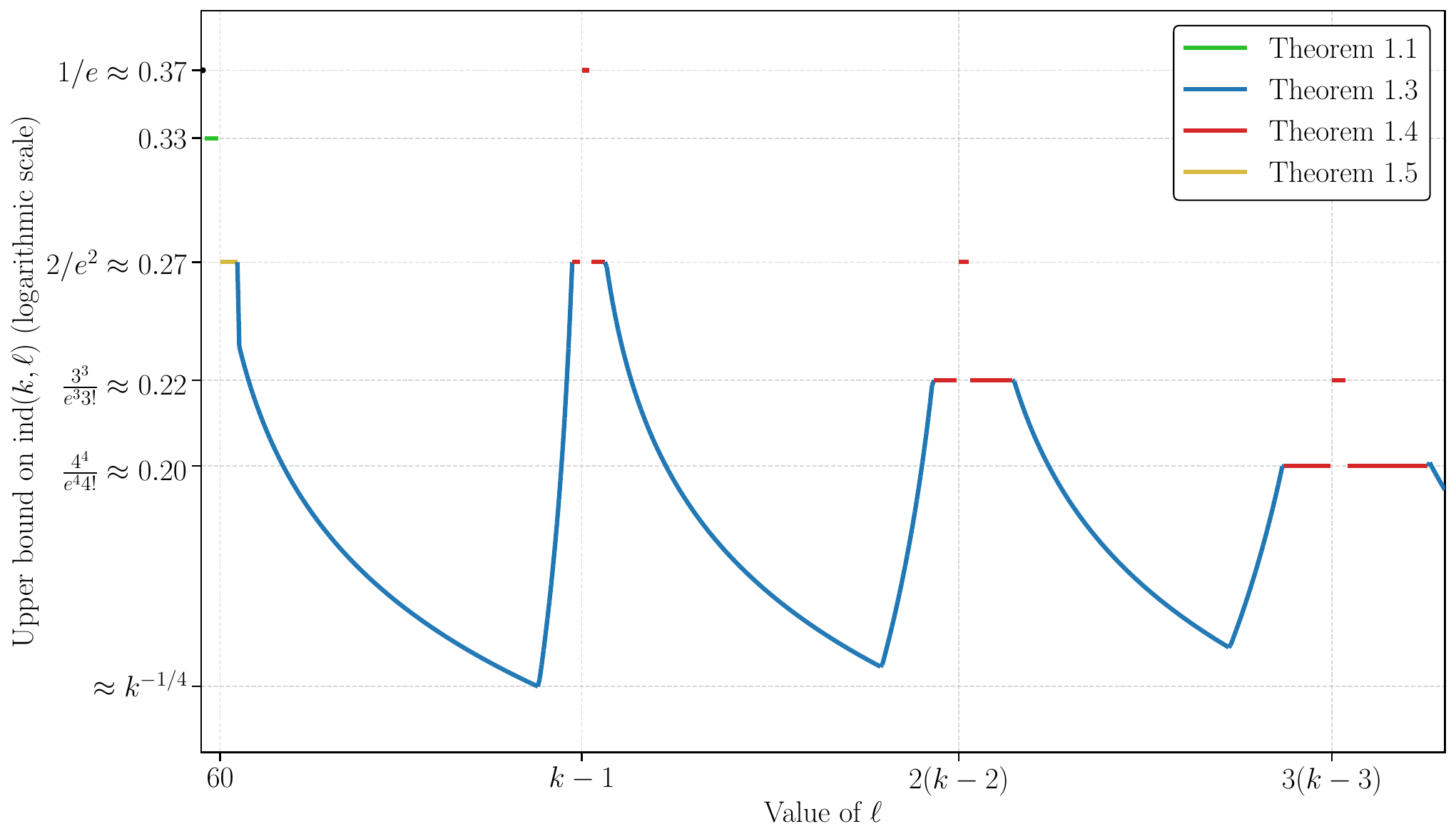}
\end{center}
\caption{This cartoon plot summarises the upper bounds on $\ind(k, \ell)$ in the range $\ell = O(k)$ and $k \to \infty$ given by \cref{better-than-1/e,far-from-multiple,close-to-multiple,2/e^2}.}
\end{figure}

\subsection{Edge-inducibilities in uniform hypergraphs}
\label{subsec:hypergraphs}

All the questions discussed in this introduction also make sense in the setting of hypergraphs. Formally, for each integer $r \ge 2$, define $\ind_r(k, \ell)$ as the limit of $N_r(n, k, \ell) / \binom{n}{k}$ as $n \to \infty$, where $N_r(n, k, \ell)$ is the maximum number of $k$-vertex subsets of an $n$-vertex $r$-uniform hypergraph that induce exactly $\ell$ edges. Jain, Kwan, Mubayi, and Tran~\cite{jain-kwan-mubayi-tran-25} proved that $\ind_r(k, \ell) \le 1/e + o(1)$ for each $\ell \notin \{0, \binom{k}{r}\}$, where the asymptotics is for $k \to \infty$ with $r$ fixed. They also conjectured that $1/e$ can be replaced by a smaller constant unless $\min(\ell, \binom{k}{r} - \ell)$ is of the form $\binom{k-d}{r-d}$ for some $d \in \{0, \ldots, r-1\}$. \cref{better-than-1/e} confirms this conjecture for $r = 2$.

While our proof of \cref{better-than-1/e} is quite specific to the graph case, some of the ideas behind \cref{bulk,far-from-multiple} can be extended to hypergraphs. We combine our approach with the methods from \cite{jain-kwan-mubayi-tran-25} (partially based on results of Bollob\'as and Scott~\cite{bollobas-scott-15}) to prove the following weaker analogue of these statements.
\begin{theorem} \label{hypergraphs-weak}
    For every integer $r \ge 2$ and $\eps > 0$, there exists $C = C(\eps, r)$ such that if $\ind_r(k, \ell) > \eps$ then
    \[
    \ell \in \Big\{\sum_{d = 0}^r \ell_d \binom{k-d}{r-d} : \ell_0 \in \{0, 1\}, \; \ell_1, \ldots, \ell_r \in \ZZ \cap [-C, C]\Big\}.
    \]
\end{theorem}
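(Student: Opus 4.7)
The plan is to prove \cref{hypergraphs-weak} by induction on the uniformity $r$. The base case $r = 2$ follows from combining \cref{bulk} with \cref{far-from-multiple}: if $\ind(k, \ell) > \eps$, these bounds (after possibly replacing $\ell$ with $\binom{k}{2} - \ell$ using complementation symmetry) force $\ell = \ell_1(k-1) + \ell_2$ with $|\ell_1|, |\ell_2| \le C = C(\eps)$, and complementation symmetry then produces the leading coefficient $\ell_0 \in \{0, 1\}$.

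For the inductive step with $r \ge 3$, let $G$ be a large $r$-uniform host hypergraph satisfying $\probs{S}{e(G[S]) = \ell} \ge \eps/2$ for a uniformly random $k$-subset $S$. The crucial identity is the link decomposition
\[
r \cdot e(G[S]) = \sum_{v \in S} e(L_v[S \setminus \{v\}]),
\]
where $L_v$ denotes the $(r-1)$-uniform link of $v$ in $G$. Following the Bollobás--Scott framework adapted in \cite{jain-kwan-mubayi-tran-25}, an averaging and pigeonhole argument identifies a positive proportion of vertices $v$ for which $e(L_v[S'])$, with $S'$ a random $(k-1)$-subset of $V(G) \setminus \{v\}$, is itself anti-concentrated at some value $\ell_v$: one has $\probs{S'}{e(L_v[S']) = \ell_v} \ge \eps'$ for some $\eps' = \eps'(\eps, r) > 0$.

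By the induction hypothesis applied to each such link, $\ell_v = \sum_{d=0}^{r-1} m_{d,v} \binom{k-1-d}{r-1-d}$ with $m_{0,v} \in \{0, 1\}$ and bounded $|m_{d,v}|$. Summing over $v \in S$, invoking the absorption identity $(k-d)\binom{k-1-d}{r-1-d} = (r-d)\binom{k-d}{r-d}$, and averaging over $S$ then shows that $r\ell$ — and hence $\ell$ — can be written as a bounded integer combination of $\binom{k-d}{r-d}$ for $d = 0, \ldots, r$; the coefficient $\ell_0 \in \{0, 1\}$ arises from the possible passage to the complement hypergraph when $\ell > \tfrac{1}{2}\binom{k}{r}$.

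The main obstacle will be extracting the pointwise link-level constraints: the upper-bound statements \cref{bulk,far-from-multiple} (and the hypergraph analogues in \cite{jain-kwan-mubayi-tran-25}) only bound anti-concentration probabilities from above, whereas the induction needs to leverage the \emph{failure} of such a bound to identify a specific value $\ell_v$. This is resolved by the Bollobás--Scott symmetric-function decomposition, which separates $e(G[S])$ into orthogonal contributions organized by codegree level, so that the bounded integer coefficients accumulated at each of the $r$ induction levels combine additively into the claimed form $\sum_{d=0}^r \ell_d \binom{k-d}{r-d}$ rather than compounding multiplicatively.
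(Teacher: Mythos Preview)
Your proposed induction has a genuine gap at the step where you ``sum over $v\in S$'' and claim bounded integer coefficients. Even granting that for a positive proportion of vertices $v$ the link count $e(L_v[S'])$ hits some structured value $\ell_v$ with probability at least $\eps'$, this only holds with probability $\eps'$, not identically; there is no single outcome of $S$ on which all $k$ link equalities hold simultaneously, so you cannot simply substitute $\ell_v$ into the identity $r\,e(G[S])=\sum_{v\in S}e(L_v[S\setminus\{v\}])$. More seriously, even if each $\ell_v$ were a bounded integer combination of $\binom{k-1-d}{r-1-d}$, the sum has $k$ terms, and the absorption identity $(k-d)\binom{k-1-d}{r-1-d}=(r-d)\binom{k-d}{r-d}$ divides by $k-d$, so $\sum_{v\in S}m_{d,v}$ would need to be divisible by (and comparable to) $k-d$ to yield a bounded integer coefficient for $\binom{k-d}{r-d}$. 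Nothing in your argument forces this. The invocation of the Bollob\'as--Scott decomposition to resolve the ``main obstacle'' is not an argument: that decomposition is used in \cite{jain-kwan-mubayi-tran-25} to prove \emph{upper} bounds on point probabilities, not to deduce that individual link counts are concentrated.

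The paper's proof avoids this entirely. It proves a polynomial dichotomy (\cref{polynomial-dichotomy}) by induction on the degree $r$: either the polynomial $f$ is anticoncentrated on the slice, or, after fixing the values of $O_{\eps,r}(1)$ variables, the resulting polynomial has \emph{all} degree-$d$ coefficients equal to a common value $\ell'_d$ for each $d$, whence $\ell=\sum_d \ell'_d\binom{k-|Y_1|}{d}$ and a change of basis gives the claimed form. The inductive step works by observing that if one value $\ell_r$ dominates the top-degree coefficients, then $f-\ell_r\sum_{|W|=r}x^W$ is sparse; if it has a large matching one gets anticoncentration from \cref{sparse-polynomials-general}, and otherwise a bounded vertex cover $U$ exists, and conditioning on the variables in $U$ drops the degree to $r-1$. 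The crucial difference from your approach is that only $O(1)$ variables are fixed (not $k$), so the bounded-coefficient structure survives the induction.
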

Roughly speaking, \cref{hypergraphs-weak} says that $\ind_r(k, \ell)$ is negligible unless $\ell$ is a linear combination of terms of the form $\binom {k-d}{r-d}$ with small coefficients (it is not hard to see that the converse is also true). The quantitative aspects of the dependence of $C$ on $\eps$ given by our proof, however, appear to be non-optimal.

\subsection{Proof ideas} 

First, we briefly outline the proof of \cref{far-from-multiple}. Since $\ind(k, \ell)$ is the limit of a non-increasing sequence $N(n, k, \ell) / \binom{n}{k}$, it suffices to prove upper bounds on $N(n, k, \ell)$ for $n = 3k$. This, in turn, boils down to estimating the probability that a uniformly random $k$-subset $X$ of an $n$-vertex graph $G$ contains exactly $\ell$ edges. Since in our case $\ell = o(k^2)$, we may assume that $G$ is sparse (otherwise, the bound easily follows from a standard concentration inequality).

If $G$ has a large matching, then we obtain the desired bound by combining a recent anticoncentration result for sparse polynomials of a uniformly random point on a ``slice'' of the Boolean hypercube (due to Jain, Kwan, Mubayi, and Tran~\cite{jain-kwan-mubayi-tran-25}) with the optimal bound for the quadratic Littlewood--Offord problem (due to Kwan and Sauermann~\cite{kwan-sauermann-23}). Specifically, we deal with this case in \cref{large-matching-corollary}.

If $G$ has no large matching, then it has a small vertex cover $U$. Conditioning on the outcome of $U \cap X$, we reduce our problem to a question about a \emph{linear} polynomial on the ``slice''. If almost all its coefficients are the same, then its value cannot be far from a multiple of $k$, and thus cannot be equal to $\ell$. Otherwise, we can bound its point concentration probability using a variant of the classical Erd\H{o}s--Littlewood--Offord theorem (due to Litvak, Lytova, Tikhomirov, Tomczak-Jaegermann, and Youssef~\cite{LLTTY-17}).

The proof of \cref{close-to-multiple} also begins with an application of \cref{large-matching-corollary}, but the rest of the argument is more delicate. Namely, instead of considering the whole vertex cover $U$, we find a subset $W \subseteq U$ of ``important'' vertices (such that the degree of each vertex in $W$ is linear in $n$ and is much larger than the total number of edges not touching $W$). Next, we prove that the number of edges inside $X$ is very unlikely to be equal to $\ell$ unless $W \cap X$ falls into a certain \emph{antichain} of subsets of $W$. Using the classical BLYM inequality on antichains, along with a result of Ehm~\cite{ehm-91} comparing the ``slice'' and ``product'' distributions, we complete the proof of the first part of the theorem and reduce the second part to a certain polynomial anticoncentration inequality (\cref{better-than-3/4}). This last ingredient almost follows from a result of Fox, Kwan, and Sauermann~\cite{fox-kwan-sauermann-21}, but requires an additional careful argument.

\cref{better-than-1/e} follows from \cref{bulk,far-from-multiple,close-to-multiple} unless $\ell = O(1)$. This last remaining case turns out to be the most challenging. Our proof in this case is computer-assisted: in \cref{reduction-to-finite}, we reduce the problem to a finite computation. While we did not make a serious effort to optimise the constant $0.33$ in the statement of \cref{better-than-1/e}, our proof does provide a computational framework for doing so (see \cref{computational_data}).

\subsection*{Organisation of the paper} In \cref{sec:slice}, we review several known results about polynomials on a slice of the Boolean hypercube. With these tools at hand, in \cref{sec:far-from-multiple}, we give a short proof of \cref{far-from-multiple}. In \cref{sec:close-to-multiple}, we focus on the case when $\ell$ is close to a nonzero multiple of $k$, and present a proof of \cref{close-to-multiple} assuming a certain technical proposition (its proof appears in \cref{sec:proof-better-than-3/4}). In \cref{sec:close-to-zero}, we complete the proofs of \cref{better-than-1/e,2/e^2} by dealing with the case when $\ell$ is close to zero. In \cref{sec:hypergraphs} we turn our attention to the hypergraph setting and prove \cref{hypergraphs-weak}.

Finally, in \cref{sec:concluding} we discuss some possible directions for further research, including a conjecture about set systems of unbounded uniformity.

\subsection*{Notation} 
We use standard asymptotic notation throughout. For functions $f=f(n)$ and $g=g(n)$, we write $f=O(g)$ or $f \lesssim g$ to mean that there is a constant $C$ such that $|f| \le C|g|, f=\Omega(g)$ or $f \gtrsim g$ to mean that there is a constant $c>0$ such that $f(n) \ge c|g(n)|$ for sufficiently large $n$, and $f=o(g)$ to mean that $f / g \rightarrow 0$ as $n \rightarrow \infty$. Subscripts on asymptotic notation indicate quantities that should be treated as constants.

We also use standard graph theory notation. In particular, $V(G)$ and $E(G)$ denote the vertex and edge sets of a graph $G$, respectively, and $e(G) = |E(G)|$. For a vertex $v \in V(G)$, we write $N_G(v)$ for its neighbourhood in $G$ and $\deg_G(v) = |N_G(v)|$ for its degree. For a set of vertices $U \subseteq V(G)$, we write $G[U]$ for the subgraph of $G$ induced by $U$. For two disjoint sets of vertices $U_1, U_2 \subseteq V(G)$, we write $G[U_1, U_2]$ for the bipartite subgraph of $G$ with parts $U_1$ and $U_2$ containing all edges of $G$ between $U_1$ and $U_2$. 

For a positive integer $n$, we write $[n]=\{1, \ldots, n\}$. All logarithms in this paper are to base $e$. All polynomials in this paper have real coefficients. We sometimes omit floor and ceiling symbols and assume large numbers are integers when divisibility considerations are not important.

\section{Polynomials on a slice of the Boolean hypercube}
\label{sec:slice}

\begin{definition} \label{slice}
    Let $\Slice(n, k)$ denote the subset of $\{0, 1\}^n$ with exactly $k$ entries equal to $1$. We write $\vec{\sigma} \sim \Slice(n, k)$ to denote a uniformly random element of $\Slice(n, k)$.
\end{definition}

Many of the classical results for polynomials of Bernoulli random variables can be transferred, with appropriate modifications, to polynomials on the slice. In this section, we collect several such adapted results that will be used throughout the paper.
First, we need the following Azuma--Hoeffding-type concentration inequality, due to Kwan, Sudakov, and Tran~\cite{kwan-sudakov-tran-19}.

\begin{proposition}[{Kwan--Sudakov--Tran~\cite[Lemma 2.1]{kwan-sudakov-tran-19}}] \label{concentration-on-the-slice}
    Consider a function $f: \{0, 1\}^n \to \RR$ such that for every $i \in [n]$ and $x_1, \ldots, x_n \in \{0, 1\}$ 
    \[
    |f(x_1, \ldots, x_{i-1}, 0, x_{i+1}, \ldots, x_n) - f(x_1, \ldots, x_{i-1}, 1, x_{i+1}, \ldots, x_n)| \le a_i.
    \]
    Let $k \le n$ and $\vec{\sigma} \sim \Slice(n, k)$. Then, for any $t > 0$, 
    \[
    \mathbb P\Big[{|f(\vec{\sigma}) - \expected{f(\vec{\sigma})}| \ge t}\Big] \le 2\exp\left(-\frac{t^2}{8 \sum_{i=1}^n a_i^2}\right).
    \] 
\end{proposition}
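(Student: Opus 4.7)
The plan is to apply the Azuma--Hoeffding inequality to a Doob martingale associated with a random-permutation representation of $\vec\sigma$, where the key twist is to reveal the coordinates in a carefully chosen order. Concretely, I would realise $\vec\sigma$ as a function of a uniformly random permutation $\pi \in S_n$ by setting $\sigma_i = \mathbf{1}[\pi(i) \le k]$ (so that $\vec\sigma \sim \Slice(n,k)$), then fix a reordering $i_1, \ldots, i_n$ of $[n]$ with $a_{i_1} \ge a_{i_2} \ge \cdots \ge a_{i_n}$, and consider
\[
M_t = \mathbb{E}\!\left[f(\vec\sigma) \mid \pi(i_1), \ldots, \pi(i_t)\right], \qquad t = 0, 1, \ldots, n,
\]
which is a martingale interpolating between $M_0 = \mathbb{E}[f(\vec\sigma)]$ and $M_n = f(\vec\sigma)$.

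The heart of the argument is a swap-coupling bound on the increments $|M_t - M_{t-1}|$. Given a history $\pi(i_1), \ldots, \pi(i_{t-1})$ and two candidate values $a, b$ for $\pi(i_t)$, I would couple the two conditional distributions on the remaining entries by the single transposition exchanging $a$ and $b$: starting from a sample with $\pi(i_t) = a$ and $\pi(i_{t'}) = b$ for some $t' > t$, swapping produces a sample with $\pi(i_t) = b$ and $\pi(i_{t'}) = a$. Under this coupling, $\vec\sigma$ changes in at most the two coordinates $i_t$ and $i_{t'}$, so the Lipschitz hypothesis gives
\[
\bigl|f(\vec\sigma^{(a)}) - f(\vec\sigma^{(b)})\bigr| \;\le\; a_{i_t} + a_{i_{t'}} \;\le\; 2 a_{i_t},
\]
where the second inequality uses $t' > t$ together with our ordering. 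Taking expectations yields the deterministic bound $|M_t - M_{t-1}| \le 2 a_{i_t}$; summing gives $\sum_{t=1}^n (2 a_{i_t})^2 = 4 \sum_{i=1}^n a_i^2$, and Azuma--Hoeffding then produces the claimed subgaussian tail with the factor $8 = 2\cdot 4$ in the denominator.

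The step I expect to require the most care is the choice of revelation order. A naive left-to-right exposition of $\pi$ leaves the swap partner $i_{t'}$ essentially uniform over positions processed after time $t$; chasing $\sum_t c_t^2$ through then introduces a spurious $\log n$ loss relative to $\sum_i a_i^2$, giving the wrong qualitative bound. Processing the coordinates in decreasing order of their Lipschitz constants is the simple but essential fix: it forces the secondary contribution $a_{i_{t'}}$ from the swap to be dominated by the primary contribution $a_{i_t}$, which is precisely what is needed to recover the clean $\sum_i a_i^2$ scaling and hence the stated constant.
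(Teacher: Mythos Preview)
The paper does not prove this proposition; it is quoted verbatim from Kwan--Sudakov--Tran and used as a black box. Your proposed argument is correct and is in fact the standard proof (and essentially the one given in the cited reference): realise $\vec\sigma$ through a uniform permutation, form the Doob martingale revealing $\pi(i_1),\ldots,\pi(i_n)$ in an order with $a_{i_1}\ge\cdots\ge a_{i_n}$, bound each increment by a transposition coupling, and apply Azuma--Hoeffding.

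The one step worth spelling out a little more carefully is the passage from the coupling to the increment bound. For fixed history and fixed candidate values $a,b\notin\{\pi(i_1),\ldots,\pi(i_{t-1})\}$, the map ``swap the values $a$ and $b$'' is a bijection between permutations with $\pi(i_t)=a$ and those with $\pi(i_t)=b$; under it the index $t'$ with $\pi(i_{t'})=b$ is random, but always satisfies $t'>t$, so the pointwise bound $|f(\vec\sigma)-f(\vec\sigma')|\le a_{i_t}+a_{i_{t'}}\le 2a_{i_t}$ holds for every sample, and averaging gives $|M_t-M_{t-1}|\le 2a_{i_t}$ deterministically. With that in hand, Azuma--Hoeffding yields exactly $2\exp\bigl(-t^2/(8\sum_i a_i^2)\bigr)$. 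Your diagnosis that the decreasing ordering is what converts the swap bound $a_{i_t}+a_{i_{t'}}$ into a clean $2a_{i_t}$, and thereby avoids a logarithmic loss in $\sum_t c_t^2$, is exactly the point of the argument.
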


Erd\H{o}s~\cite{erdos-45}, sharpening a result of Littlewood and Offord~\cite{littlewood-offord-43}, proved that for arbitrary nonzero real numbers $a_1, \ldots, a_n$ and independent Rademacher random variables $\xi_1, \ldots, \xi_n$
\begin{equation} \label{eq:ELO}
\sup_{\ell \in \RR}\prob{a_1 \xi_1 + \ldots + a_n \xi_n = \ell} \le \binom{n}{\lfloor n/2 \rfloor} \cdot 2^{-n} \lesssim \frac{1}{\sqrt{n}}.
\end{equation}
The next proposition, essentially due to Litvak, Lytova, Tikhomirov, Tomczak-Jaegermann, and Youssef~\cite{LLTTY-17}, provides an analogue of this result for linear polynomials on the slice.
\begin{proposition} \label{linear-on-the-slice}
    Let $R, n, k, m \in \NN$ satisfy $2k \le n \le Rk$ and $m \le k$. Consider a linear polynomial $f(x_1, \ldots, x_n) = a_1 x_1 + \ldots + a_n x_n$, and let $\vec{\sigma} \sim \Slice(n, k)$. Then either there exists a set $I \subseteq [n]$, $|I| \ge n-m$ such that all $a_i$ for $i \in I$ are equal, or
    \[
    \sup_{\ell \in \RR} \prob{f(\vec{\sigma}) = \ell} \lesssim_R \frac{1}{\sqrt{m}}.
    \]
\end{proposition}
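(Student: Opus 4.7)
The plan is to reduce the problem to the classical Erd\H{o}s--Littlewood--Offord inequality \cref{eq:ELO} via a pairing-and-switching argument. Throughout I would assume we are not in the first case: if $c^\ast$ achieves $n_1 = \max_c |\{i : a_i = c\}|$, then $n - n_1 \ge m+1$ (any value appearing $\ge n-m$ times would put us in the first case). Since $m \le k \le n/2$, the goal becomes extracting a Rademacher sum with at least $\Omega_R(m)$ nonzero coefficients out of $f(\vec\sigma)$.

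First I would exhibit $m$ disjoint pairs $(i_1, j_1), \ldots, (i_m, j_m)$ of indices with $a_{i_s} \ne a_{j_s}$. If $n_1 \le n/2$, I would sort the indices by $a$-value and pair the $s$-th smallest with the $(s + \lfloor n/2 \rfloor)$-th smallest for $s = 1, \ldots, \lfloor n/2 \rfloor \ge m$: any bad pair would force $\lfloor n/2\rfloor + 1$ consecutive equal entries in the sorted order, contradicting $n_1 \le n/2$. If instead $n_1 > n/2$, I would pair each of the $n - n_1 \ge m+1$ non-$c^\ast$ indices with a distinct $c^\ast$-index. Either way, I keep the first $m$ resulting good pairs.

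Next, with $\tau = \vec\sigma^{-1}(1)$, I would condition on $\vec\sigma$ restricted to the unpaired coordinates together with each pair sum $\sigma_{i_s} + \sigma_{j_s} \in \{0,1,2\}$. Let $T := \{s : \sigma_{i_s} + \sigma_{j_s} = 1\}$; this set is determined by the conditioning. Within each mixed pair, swapping $i_s \leftrightarrow j_s$ preserves both the unpaired restriction and every pair sum, so these involutions generate a free and transitive $(\ZZ/2)^{|T|}$-action on the conditional fibre. Hence $(\mathds{1}[i_s \in \tau])_{s \in T}$ are conditionally i.i.d.\ $\mathrm{Bernoulli}(1/2)$, and
\[
f(\vec\sigma) = C + \sum_{s \in T} \frac{a_{i_s} - a_{j_s}}{2}\, \xi_s,
\]
where $C$ depends only on the conditioning, $(\xi_s)_{s \in T}$ are i.i.d.\ Rademacher, and every coefficient is nonzero by construction. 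Applying \cref{eq:ELO} conditionally yields a point-concentration bound of $O(|T|^{-1/2})$ whenever $|T| \ge 1$.

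It then remains to certify that $|T| \gtrsim_R m$ with high enough probability. A direct computation gives $\prob{\sigma_{i_s}+\sigma_{j_s}=1} = 2k(n-k)/(n(n-1)) \ge 1/R$ using $n/R \le k \le n/2$, so $\EE[|T|] \ge m/R$. Since only the $2m$ paired coordinates affect $|T|$ and each coordinate flip changes $|T|$ by at most $1$, \cref{concentration-on-the-slice} applied to $g(\vec\sigma) = |T(\vec\sigma)|$ gives $\prob{|T| < \EE[|T|]/2} \le 2 \exp(-\Omega_R(m))$, which is $\lesssim m^{-1/2}$ (trivially so for bounded $m$). Combining with the conditional ELO bound yields $\sup_\ell \prob{f(\vec\sigma)=\ell} \lesssim_R m^{-1/2}$, as desired. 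The main subtle point is the free-and-transitive switching argument that turns the slice distribution on a mixed pair into an honest fair coin flip; everything afterwards is routine bookkeeping.
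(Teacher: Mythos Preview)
Your argument is correct and takes a genuinely different route from the paper's. The paper's proof treats the case $n=2k$ as a black box (citing \cite[Proposition~4.10]{LLTTY-17}) and then reduces the general case to it by passing to a uniformly random $2k$-subset $B\subseteq [n]$, using \cref{concentration-on-the-slice} only to guarantee that $|B\cap I|$ is large for a suitable ``boundary'' set $I$. You instead work directly on $\Slice(n,k)$: you manufacture $m$ disjoint unequal pairs by a sorting/greedy argument based on the multiplicity $n_1$ of the most common coefficient, condition on the unpaired coordinates and all pair sums, and use the free transitive $(\ZZ/2)^{|T|}$-action on the conditional fibre to extract an honest Rademacher sum, to which you apply \cref{eq:ELO}. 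Your approach is more self-contained---it reproves the LLTTY input rather than citing it, and avoids the two-step reduction---at the cost of a slightly longer write-up. Both proofs use \cref{concentration-on-the-slice} in the same way at the end (to certify that the ``useful'' part has size $\gtrsim_R m$), so the essential quantitative content is identical.
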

\begin{proof} 
    \cite[Proposition 4.10]{LLTTY-17} states that for a polynomial $f_0(x_1, \ldots, x_{2k}) = \sum_{i=1}^{2k} b_i x_i$ and $\vec{\sigma}_0 \sim \Slice(2k, k)$, if there exists a set $I_0 \subseteq [2k]$ such that $|I_0| \le k$ and $b_i \neq b_j$ for every $i \in I_0, j \notin I_0$, then 
    \begin{equation} \label{eq:LLTTY}
    \sup_{\ell \in \RR} \prob{f_0(\vec{\sigma}_0) = \ell} \lesssim \frac{1}{\sqrt{|I_0|}}.
    \end{equation}

    Let $I$ be the largest subset of $[n]$ such that $|I| \le n/2$ and 
    \begin{enumerate}
        \item[$(\diamond)$] $a_i \neq a_j$ for every $i \in I, j \notin I$.
    \end{enumerate}
    First, suppose that $|I| < m/2$. Let $x$ be an arbitrary element of $[n] \setminus I$, and let $J = \{j \in [n] : a_j = a_x\}$. Note that $J$ is disjoint from $I$, and that both $I \cup J$ and $[n] \setminus (I \cup J)$ satisfy the property $(\diamond)$ as well. Therefore, we have $|I \cup J| > n/2$. Thus, $|[n] \setminus (I \cup J)| < n/2$, and we conclude (by the choice of $I$) that $|[n] \setminus (I \cup J)| < m/2$. But then $|J| = n - |I| - |[n] \setminus (I \cup J)| > n - m$, and all $a_j$ for $j \in J$ are equal.

    So, we may assume that $|I| \ge m/2$. Note that a uniformly random element of $\Slice(n, k)$ has the same distribution as a uniformly random element of $\Slice(2k, k)$ where the ground set is a uniformly random subset $B$ of $[n]$ of size $2k$. By \cref{concentration-on-the-slice} applied to the polynomial $\sum_{i \in I} x_i$ on $\Slice(n, 2k)$, we conclude that $|B \cap I| \ge (k/n)|I| \ge m/(2R)$ with probability at least $1 - 2\exp(-\Omega_R(m))$ over the choice of $B$. In this case, application of \cref{eq:LLTTY} to the polynomial $\sum_{i \in B} a_i x_i$ and the set $B \cap I$ gives the desired bound of $O_R(1/\sqrt{m})$. 
\end{proof}

To state an extension of the Erd\H{o}s--Littlewood--Offord theorem to polynomials of higher degree, we need the following definitions.

\begin{definition} \label{def:nu_r}
Let $f \in \RR[x_1, \ldots, x_n]$ be an $n$-variable polynomial of degree $r$. Then $\nu_r(f)$ is the maximum integer $m$ such that there exist $m$ disjoint subsets $I_1, \ldots, I_m \subseteq [n]$, $|I_j| = r$ such that for every $j$ the coefficient of $\vec{x}^{I_j} = \prod_{i \in I_j} x_i$ in $f$ is nonzero. 
\end{definition}
\begin{definition} \label{def:LO}
For each $r \in \NN$, we define
\[
\LO_r(m) = \sup_{f, n} \prob{f(\xi_1, \ldots, \xi_n) = 0}, 
\]
where $\xi_1, \ldots, \xi_n$ are independent Rademacher random variables, and the supremum is over all $n \in \NN$ and all $n$-variable polynomials $f$ of degree $r$ with $\nu_r(f) \ge m$.
\end{definition}

In these terms, the Erd\H{o}s--Littlewood--Offord theorem translates to $\LO_1(m) \lesssim 1/\sqrt{m}$. Recently, Kwan and Sauermann~\cite{kwan-sauermann-23} obtained the same optimal bound in the quadratic case: $\LO_2(m) \lesssim 1/\sqrt{m}$. It is widely believed that $\LO_r(m) \lesssim_r 1/\sqrt{m}$ holds true for all $r \ge 1$, but the best known bound (by Meka, Nguyen, and Vu~\cite{meka-nguyen-vu-16} via a result of Kane~\cite{kane-14}) has an extra factor of $(\log m)^{O_r(1)}$.

The following theorem (due to Jain, Kwan, Mubayi, and Tran~\cite{jain-kwan-mubayi-tran-25}) allows one to transfer these results to sufficiently ``sparse'' polynomials on the slice with small non-negative integer coefficients.

\begin{theorem}[{Jain--Kwan--Mubayi--Tran~\cite[Lemma 5.1]{jain-kwan-mubayi-tran-25}}] \label{large-matching}
    For $r, q \in \NN$, there exists $\delta = \delta(r, q) > 0$ such that the following holds. Consider $R, k, m, n \in \NN$ such that $2k \le n \le Rk$. Let $f$ be an $n$-variable multilinear polynomial of degree at most $r$ with coefficients in $\{0, \ldots, q\}$. Suppose that $\nu_r(f) \ge m$ and $f$ has at most $\delta n^r$ nonzero degree-$r$ terms. Then, for $\vec{\sigma} \sim \Slice(n, k)$,
    \[
    \sup_{\ell \in \RR} \prob{f(\vec{\sigma}) = \ell} \lesssim_{r, q, R} \max_{r' \le r} \LO_{r'}(\Omega_{r, q, R}(m)).
    \]
\end{theorem}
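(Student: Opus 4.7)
The plan is to reduce the slice distribution to a product of independent Rademacher variables via a pair-swap coupling, so that $f(\vec\sigma)$ becomes a polynomial of degree at most $r$ in the Rademacher variables with $\Omega(m)$ disjoint nonzero top-degree monomials; the definition of $\LO_r$ then finishes the job.

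More concretely, I would start by fixing disjoint $r$-subsets $I_1, \ldots, I_m \subseteq [n]$ witnessing $\nu_r(f) \ge m$, with coefficients $c_1, \ldots, c_m \in \{1, \ldots, q\}$, and write $I_j = \{i_j^{(1)}, \ldots, i_j^{(r)}\}$. I would then pick auxiliary elements $P_j = \{p_j^{(1)}, \ldots, p_j^{(r)}\}$ uniformly at random so that all $I_j$'s and $P_j$'s are pairwise disjoint; this is possible because $n \ge 2k$ and we may assume $m \le ck$ for a small constant $c$, with the complementary case easily reduced. Next, generate $\vec\sigma \sim \Slice(n,k)$ in two stages: first expose $\vec\sigma$ outside $\bigcup_{j,s} \{i_j^{(s)}, p_j^{(s)}\}$ together with the pair-sums $\sigma_{i_j^{(s)}} + \sigma_{p_j^{(s)}}$, then sample the pair-interiors conditionally. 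On the ``good event'' $\mathcal E$ that each pair-sum equals $1$, which an elementary slice computation (using $n \le Rk$) shows has probability $\Omega_{r,R}(1)$, the pair-interiors are independent fair coin flips, so we may write $\sigma_{i_j^{(s)}} = \xi_j^{(s)}$ and $\sigma_{p_j^{(s)}} = 1 - \xi_j^{(s)}$, with the $\xi_j^{(s)}$'s independent Rademacher-type random variables.

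Under this coupling, $f(\vec\sigma)$ becomes a polynomial $g(\xi)$ of degree at most $r$, and the original top-degree monomial $c_j \vec{x}^{I_j}$ contributes exactly $c_j \prod_s \xi_j^{(s)}$, placing a degree-$r$ monomial of known nonzero coefficient on the disjoint variable block $\{\xi_j^{(s)} : s \in [r]\}$. The danger is interference: another degree-$r$ monomial $c_T \vec{x}^T$ of $f$ with $T \subseteq I_j \cup P_j$, $T \ne I_j$, also contributes $\pm c_T$ (times a constant) to the coefficient of $\prod_s \xi_j^{(s)}$ in $g$, possibly cancelling it. For any fixed such $T$, however, the probability over the random auxiliary choices that $T \subseteq I_j \cup P_j$ for some $j$ is $O_r(m/n^r)$, so the expected total interference, summed over the at most $\delta n^r$ nonzero top-degree monomials of $f$, is $O_r(\delta m)$. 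Taking $\delta$ sufficiently small in terms of $r$ and $q$ and applying Markov's inequality gives, with constant probability over the $P_j$'s, at least $m/2$ blocks that are either undisturbed or still retain a coefficient bounded away from zero (using $c_T \in \{0, \ldots, q\}$), so $\nu_r(g) \gtrsim_{r,q} m$ on this event.

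Finally, conditional on $\mathcal E$, the exposed outside data, and a favourable $P_j$-choice, $g$ is a multilinear polynomial of degree at most $r$ in independent Rademacher variables with $\nu_r(g) = \Omega(m)$, so $\sup_\ell \mathbb P[g(\xi) = \ell \mid \cdot] \le \LO_r(\Omega(m))$ by definition. Unconditioning over the exposed data and multiplying by the $\Omega_{r,R}(1)$ probability of $\mathcal E$ and the favourable $P_j$-choice yields the claimed bound (the $\max_{r' \le r}$ in the statement absorbs corner cases where interference forces the effective matching to sit at a lower degree). The hard part is the interference analysis: controlling not only that few bad monomials land in each matching block, but also that their coefficients—bounded in magnitude by $q$—cannot conspire to cancel $c_j$ exactly; this is where the quantitative dependence of $\delta$ on $q$ enters, and is the step that requires the most care.
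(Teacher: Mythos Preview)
Your pair-swap scheme has a fatal gap: the ``good event'' $\mathcal E$ that \emph{every one} of the $rm$ pair-sums equals $1$ does not have probability $\Omega_{r,R}(1)$. Even in the most favourable case $n=2k$ one has
\[
\mathbb P[\mathcal E]=\frac{2^{rm}\binom{2k-2rm}{k-rm}}{\binom{2k}{k}}=\Theta\bigl(2^{-rm}\bigr),
\]
and for $n>2k$ the probability is smaller still. Since you allow $m$ up to $ck$, this is exponentially small in $k$. You have no control on $\mathbb P\bigl[f(\vec\sigma)=\ell\mid\mathcal E^c\bigr]$, so there is no way to ``uncondition'': multiplying a conditional upper bound by $\mathbb P[\mathcal E]$ goes in the wrong direction and cannot yield an upper bound on the unconditional probability.

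The argument the paper records (in the appendix for the generalisation, following Jain--Kwan--Mubayi--Tran) avoids this by building the Rademachers into the sampling rather than hoping they appear via conditioning. One samples a uniformly random sequence $\vec v=(v_1(-1),v_1(1),\ldots,v_k(-1),v_k(1))$ of $2k$ distinct elements of $[n]$ together with independent Rademachers $\xi_1,\ldots,\xi_k$; then $\{v_1(\xi_1),\ldots,v_k(\xi_k)\}$ is \emph{exactly} a uniform $k$-subset, with no exceptional event. Hence $f(\vec\sigma)=\sum_I A_{\vec v}(I)\,\vec\xi^{\,I}$ holds for every outcome of $\vec v$, and the real work is to show that with high probability over $\vec v$ there are $\Omega_{r,q,R}(m)$ disjoint sets $I$ of some common size $s\le r$ with $A_{\vec v}(I)\ne0$. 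This is done by locating inside each matching edge $E_j$ a minimal subset $F_j$ whose averaged coefficient is not too small---and here the degree may genuinely drop below $r$, which is why the $\max_{r'\le r}$ is not a mere technicality. Your approach could be rescued by conditioning only on the exposed data and arguing that with high probability a constant \emph{fraction} of the $m$ blocks are fully good; but as written, the argument collapses precisely at the step you flagged as ``elementary''.
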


For \cref{hypergraphs-weak} (on hypergraph edge-inducibilities), we will also need some minor variants of \cref{large-matching}, whose statements we defer to \cref{sec:hypergraphs}. 

Combining \cref{large-matching} with a concentration inequality (\cref{concentration-on-the-slice}) and with the optimal bounds for the linear and quadratic Littlewood--Offord problems, we obtain the following corollary.

\begin{corollary} \label{large-matching-corollary}
    There exists an absolute constant $\delta_0 > 0$ such that the following holds. Consider $R, k, n, m \in \NN$ such that $2k \le n \le Rk$, and let $f$ be an $n$-variable multilinear quadratic polynomial with coefficients in $\{0, 1\}$, such that $\nu_2(f) \ge m$. Then, for $\vec{\sigma} \sim \Slice(n, k)$ and every $\ell \le \delta_0 k^2$,
    \[
    \prob{f(\vec{\sigma}) = \ell} \lesssim_{R} \frac{1}{\sqrt{m}}.
    \]
\end{corollary}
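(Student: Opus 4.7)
The plan is to split on how many nonzero quadratic terms $f$ has. Absorbing the (at most $1$) constant term into $\ell$, write $f(x) = \sum_{\{i,j\} \in E(G)} x_i x_j + \sum_{i \in L} x_i$ for some graph $G$ on $[n]$ and some $L \subseteq [n]$. Note that $\nu_2(f)$ is exactly the matching number of $G$, so $G$ has a matching of size $m$. Let $\delta = \delta(2,1)$ be the constant from \cref{large-matching}, and choose $\delta_0$ to be a small multiple of $\delta$ (e.g.\ $\delta/10$).

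In the \emph{sparse regime}, where $|E(G)| \le \delta n^2$, one applies \cref{large-matching} directly with $r = 2$, $q = 1$, yielding
\[
\prob{f(\vec{\sigma}) = \ell} \;\lesssim_R\; \max_{r' \le 2} \LO_{r'}\bigl(\Omega_R(m)\bigr) \;\lesssim\; \frac{1}{\sqrt{m}},
\]
where the final inequality combines the Erd\H{o}s--Littlewood--Offord bound $\LO_1(m) \lesssim 1/\sqrt{m}$ with the Kwan--Sauermann bound $\LO_2(m) \lesssim 1/\sqrt{m}$ mentioned in the text.

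In the \emph{dense regime}, where $|E(G)| > \delta n^2$, the bound comes from concentration. Here the quadratic part of $f(\vec\sigma)$ alone has expectation $|E(G)| \cdot \binom{k}{2}/\binom{n}{2} \ge \delta k^2/2$ (for $k,n \ge 2$), and since the linear part contributes non-negatively and $\ell \le \delta_0 k^2 \ll \delta k^2$, we get $\expected{f(\vec\sigma)} - \ell \gtrsim k^2$. On the other hand, changing a single coordinate of $f$ alters its value by at most $1 + \deg_G(i) \le n$, so \cref{concentration-on-the-slice} with $\sum_i a_i^2 \lesssim n^3$ gives
\[
\prob{f(\vec\sigma) = \ell} \;\le\; \prob{f(\vec\sigma) \le \expected{f(\vec\sigma)} - \Omega(k^2)} \;\le\; 2\exp\bigl(-\Omega_R(k^4/n^3)\bigr) \;=\; 2\exp(-\Omega_R(k)).
\]
Since $m \le \lfloor n/2 \rfloor \le Rk/2$, we have $1/\sqrt{m} \gtrsim_R 1/\sqrt{k}$, so this exponential bound comfortably beats $1/\sqrt{m}$ once $k$ exceeds some threshold depending on $R$. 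For $k$ below that threshold, both $n$ and the set of possible polynomials are bounded in terms of $R$, and $1/\sqrt{m}$ is bounded below by a constant depending on $R$, so these finitely many cases are absorbed into the implicit constant in $\lesssim_R$.

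I do not expect any substantive obstacle: \cref{large-matching} does all of the heavy lifting in the sparse regime, and the dense regime is a routine concentration-of-measure argument. The only thing to keep in mind is the bookkeeping of the constants $\delta_0$ and $\delta$, which must be chosen so that $\ell \le \delta_0 k^2$ is strictly smaller than the guaranteed lower bound on $\expected{f(\vec\sigma)}$.
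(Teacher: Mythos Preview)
Your proposal is correct and follows essentially the same approach as the paper: split into a sparse regime (at most $\delta n^2$ quadratic terms) handled by \cref{large-matching} together with the optimal $\LO_1,\LO_2$ bounds, and a dense regime handled by \cref{concentration-on-the-slice} to show $\prob{f(\vec\sigma)=\ell}\le\exp(-\Omega_R(k))$, which is $O_R(1/\sqrt m)$ since $m\le n/2\le Rk/2$. The only cosmetic differences are your choice $\delta_0=\delta/10$ versus the paper's $\delta/4$, your use of $a_i=1+\deg_G(i)$ versus the paper's uniform $a_i=n+1$, and your explicit remark on small $k$, none of which affects the argument.
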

\begin{proof}
    Let $\delta > 0$ be such that \cref{large-matching} holds with $r = 2$ and $q = 1$. If $f$ has at most $\delta n^2$ nonzero quadratic terms, then we have
    \[
    \sup_{\ell \in \RR}\prob{f(\vec{\sigma}) = \ell} \le \max_{r' = 1, 2} \LO_{r'}(\Omega_R(m))\lesssim_{R} \frac{1}{\sqrt{m}}.
    \]
    So, we can assume that $f$ has at least $\delta n^2$ nonzero quadratic terms. Set $\delta_0 = \delta/4$. Then
    \[
    \expected{f(\vec{\sigma})} \ge \frac{k(k-1)}{n(n-1)} \cdot \delta n^2 \ge 2 \delta_0 k^2.
    \]
    Note that the polynomial $f$ satisfies the assumptions of \cref{concentration-on-the-slice} with $a_i = n+1$ for every $i \in [n]$. Therefore, we conclude that for every $\ell \le \delta_0 k^2$,
    \[
    \prob{f(\vec{\sigma}) = \ell} \le \prob{f(\vec{\sigma}) \le \delta_0 k^2} \le 2\exp\left(-\frac{(\delta_0 k^2)^2}{8n(n+1)^2}\right) \le \exp(-\Omega_R(k)).
    \]
    Since $k \ge n/R \ge m/R$, this bound is certainly $O_R(1/\sqrt{m})$.
\end{proof}

\section{$\ell$ is far from a multiple of $k$}
\label{sec:far-from-multiple}

\begin{proof}[\textbf{Proof of \cref{far-from-multiple}}]
    We may assume that $k$ is sufficiently large. Recall that we have $\ell = ak + \ell_0$ for some $1 \le \ell_0 \le k-1$ and $0 \le a \le \sqrt{k}$, and set $m = \frac{1}{3}\min(\sqrt{\ell_0}, (k-\ell_0)/(a+1))$. In particular, we have $2m^2 < \ell_0$ and $2(a+1)m < k-\ell_0$.

    Also recall that, by definition, $\ind(k, \ell)$ is the limit of a non-increasing sequence $N(n, k, \ell) / \binom{n}{k}$, where $N(n, k, \ell)$ is the maximum number of $k$-vertex subsets of an $n$-vertex graph that induce exactly $\ell$ edges. Thus, it is sufficient to prove that $N(n, k, \ell) / \binom{n}{k} \lesssim 1/\sqrt{m}$ for $n = 3k$.

    Consider a graph $G$ on $n = 3k$ vertices. Let $X$ be a uniformly random $k$-subset of $V(G)$. Then $e(G[X])$ can be interpreted as $f(\vec{\sigma})$ for $\vec{\sigma} \sim \Slice(n, k)$ and some homogeneous multilinear quadratic polynomial $f$ in $n$ variables with coefficients in $\{0, 1\}$. We need to prove that 
    \[
    \prob{e(G[X]) = \ell} = \prob{f(\vec{\sigma}) = \ell} \lesssim 1/\sqrt{m},
    \]
    where $\ell \le (\sqrt{k} + 1) k$. If $\nu_2(f) \ge m/2$, then this bound follows from \cref{large-matching-corollary}. 
    
    Thus, we may assume that $\nu_2(f) < m/2$. In this case, there exists a set $U \subseteq [n]$, $|U| \le m$ such that every quadratic term of $f$ contains at least one variable $x_{u}$ for $u \in U$. For the rest of the proof, we condition on an arbitrary outcome $U'$ of $U \cap X$. In terms of $f$, this corresponds to substituting $x_u = 0$ for every $u \in U \setminus U'$, and $x_u = 1$ for every $u \in U'$. After substitution, the polynomial becomes of degree at most one in the remaining variables (denote it by $f_{U'}$). Then we need to estimate the probability $\prob{f_{U'}(\vec{\sigma}_0) = \ell}$ for $\vec{\sigma}_0 \sim \Slice(n - |U|, k -|U'|)$. 

    Note that the constant term of $f_{U'}$ is at most $|U'|^2 \le m^2$, and that the coefficients of its linear terms lie in $\{0, 1, \ldots, m\}$. If no $(n-|U|)-m$ of these linear coefficients are equal, then by the anticoncentration result for linear polynomials on the slice (\cref{linear-on-the-slice}), we obtain the desired bound $\prob{f_{U'}(\vec{\sigma}_0) = \ell} \lesssim 1/\sqrt{m}$.

    So, we may assume that all but $m$ linear coefficients of $f_{U'}$ are equal to some integer $a_0 \in \{0, 1, \ldots, m\}$. In this case, we will prove that $\prob{f_{U'}(\vec{\sigma}_0) = \ell} = 0$. Indeed, note that $f_{U'}(\vec{\sigma}_0)$ then satisfies the following inequalities (with probability $1$):
    \[
    a_0(k - 2m) \le a_0(k - |U'| - m) \le f_{U'}(\vec{\sigma}_0) \le a_0 k + |U'| \cdot m + m^2 \le a_0k + 2m^2.
    \]
    Since $2m^2 < \ell_0$, for $a_0 \le a$ we have  
    \[
    f_{U'}(\vec{\sigma}_0) \le a_0 k + 2m^2 < a k + \ell_0 = \ell.
    \]
    Since $2 (a+1) m < k-\ell_0$, for $a_0 \ge a+1$ we have
    \[
    f_{U'}(\vec{\sigma}_0) \ge a_0(k-2m) > (a+1)k - (k-\ell_0) = \ell. \qedhere
    \]    
\end{proof}

\section{$\ell$ is close to a nonzero multiple of $k$}
\label{sec:close-to-multiple}

In this section, we focus on the case when $\ell$ is close to $a(k-a)$ for some fixed integer $a \ge 1$; our goal is to prove \cref{close-to-multiple}.

For $p \in [0, 1]$ and a function $F: \{0, 1\}^n \to \RR$, we write $F(\vec{\xi}(p))$ as a shorthand for $F(\xi_1, \ldots, \xi_n)$, where $\xi_1, \ldots, \xi_n$ are independent $\Ber(p)$ random variables.
For discrete probability distributions $\mu, \nu$ on the real line, recall that the \emph{total variation distance} $\mathrm{d_{TV}}(\mu, \nu)$ is the supremum of $|\mu(S) - \nu(S)|$ over all $S \subseteq \RR$. 

In \cref{sec:slice}, we collected several anticoncentration inequalities for polynomials on a slice of the Boolean hypercube, mirroring classical results for polynomials of independent Bernoulli variables. In the special case when the polynomial depends only on a few variables, the following result (essentially due to Ehm~\cite{ehm-91}, as observed in \cite{jain-kwan-mubayi-tran-25}) provides a direct way of comparing these two settings.

\begin{theorem}[{\cite[Theorem 7.1]{jain-kwan-mubayi-tran-25}}] \label{product-slice}
    Let $n, k, s \in \NN$ satisfy $k \le n/2$ and $s \le n$. Consider a function $F: \{0, 1\}^n \to \RR$ which depends only on $x_1, \ldots, x_s$. Then, for $\vec{\sigma} \sim \Slice(n, k)$ 
    \[
    \mathrm{d_{TV}}\left(F(\vec{\sigma}), F(\vec{\xi}(k/n))\right) \le \frac{\max(s, 2n/k) - 1}{n-1} \le \max(s/n, 3/k).
    \]
    In particular,
    \[
    \left| \prob{F(\vec{\sigma}) = 0} - \prob{F(\vec{\xi}(k/n)) = 0} \right| \le \max(s/n, 3/k).
    \]
\end{theorem}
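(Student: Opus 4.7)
The plan is to reduce the claim to a one-dimensional total-variation comparison between a hypergeometric and a binomial distribution, and then invoke a classical quantitative estimate due to Ehm. The first step is data processing: since $F$ depends only on $x_1, \ldots, x_s$, both $F(\vec\sigma)$ and $F(\vec\xi(k/n))$ factor through the restriction of the respective distributions to their first $s$ coordinates, so the data-processing inequality for total variation yields
\[
\mathrm{d_{TV}}\bigl(F(\vec\sigma), F(\vec\xi(k/n))\bigr) \le \mathrm{d_{TV}}\bigl((\sigma_1, \ldots, \sigma_s), (\xi_1, \ldots, \xi_s)\bigr).
\]
It therefore suffices to control the total variation between these two exchangeable distributions on $\{0, 1\}^s$.

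The second step uses exchangeability. Both distributions are permutation-invariant in the first $s$ coordinates, and conditional on the coordinate-sum each yields the uniform distribution on the subsets of $[s]$ of the corresponding size. Writing $T = \sum_{i=1}^s \sigma_i$ and $T' = \sum_{i=1}^s \xi_i$, a short direct computation comparing $\mathbb{P}((\sigma_1,\ldots,\sigma_s) = x) = \mathbb{P}(T = |x|)/\binom{s}{|x|}$ with the analogous formula for $\vec\xi$ gives
\[
\mathrm{d_{TV}}\bigl((\sigma_1, \ldots, \sigma_s), (\xi_1, \ldots, \xi_s)\bigr) = \mathrm{d_{TV}}(T, T'),
\]
where $T \sim \mathrm{Hyp}(n, k, s)$ and $T' \sim \Bin(s, k/n)$. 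The question therefore reduces to: how close in total variation are the hypergeometric $\mathrm{Hyp}(n, k, s)$ and the binomial $\Bin(s, k/n)$?

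The third step invokes Ehm's $1991$ sharp comparison bound for these two distributions, which provides essentially $(s-1)/(n-1)$ and matches the $s$-dependence of the target bound whenever $s \ge 2n/k$. For smaller $s$ one can trivially enlarge the ``window'' of relevant coordinates of $F$ from the first $s$ to the first $\lceil 2n/k \rceil$ (if $F$ depends only on $x_1, \ldots, x_s$ then it depends \emph{a fortiori} only on $x_1, \ldots, x_{\lceil 2n/k \rceil}$), and then apply Ehm in that larger regime. Collecting the two cases gives the desired bound $(\max(s, 2n/k) - 1)/(n-1)$, and the simpler estimate $\max(s/n, 3/k)$ follows by elementary manipulation using $k \le n/2$. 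The only genuinely nontrivial ingredient is Ehm's sharp hypergeometric-binomial bound itself, which is the main obstacle: its proof requires either Stein's method (as in Ehm's original argument) or a careful direct analysis of the point probabilities, and we treat it as a black box.
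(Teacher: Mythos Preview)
Your proposal is correct. The paper does not give its own proof of this statement---it cites it as \cite[Theorem 7.1]{jain-kwan-mubayi-tran-25} and attributes the content to Ehm---and your reduction (data-processing to the first $s$ coordinates, then exchangeability to collapse the comparison to a hypergeometric-versus-binomial total variation, then Ehm's bound as a black box) is exactly the route the paper points to.
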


Fox, Kwan, and Sauermann~\cite{fox-kwan-sauermann-21} observed that the following proposition is a consequence of Alon's Combinatorial Nullstellensatz~\cite{alon-99}.

\begin{proposition}[{Fox--Kwan--Sauermann~\cite[Proposition 1.9]{fox-kwan-sauermann-21}}] \label{FKS-nullstellensatz}
    Let $f$ be a polynomial of degree at most $r$ with a nonzero constant term. Then, for each $p \in [0, 1/2]$,
    \[
    \prob{f(\vec{\xi}(p)) = 0} \le 1 - 2^{-r}.
    \]
\end{proposition}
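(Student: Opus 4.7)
The plan is to prove the slightly stronger statement $\prob{f(\vec{\xi}(p)) \ne 0} \ge 2^{-\deg f}$ by induction on the number of variables $n$. Without loss of generality, $f$ may be taken to be multilinear: reducing $f$ modulo $x_i^2 - x_i$ does not change its values on $\{0,1\}^n$ and does not increase the degree. The base case $n = 0$ is immediate since $f$ is then a nonzero constant.

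For the inductive step, write $f(x) = x_n g(x_1, \ldots, x_{n-1}) + h(x_1, \ldots, x_{n-1})$ with $\deg g \le \deg f - 1$, $\deg h \le \deg f$, and $h(\vec{0}) = f(\vec{0}) \ne 0$, so that
\[
\prob{f(\vec{\xi}(p)) \ne 0} = (1-p)\prob{h(\vec{\xi}(p)) \ne 0} + p\prob{(g+h)(\vec{\xi}(p)) \ne 0}.
\]
If $g \equiv 0$, or more generally if $(g+h)(\vec{0}) \ne 0$, then both $h$ and $g+h$ have nonzero constant term and degree $\le \deg f$; the inductive hypothesis gives $\prob{h \ne 0}, \prob{g+h \ne 0} \ge 2^{-\deg f}$, and the convex combination is then $\ge 2^{-\deg f}$.

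The main obstacle is the remaining case $(g+h)(\vec{0}) = 0$, in which $g(\vec{0}) = -h(\vec{0}) \ne 0$ and the inductive hypothesis does not apply to $g+h$ directly. Here, I would apply IH to $g$ to get the \emph{strengthened} bound $\prob{g \ne 0} \ge 2^{-\deg g} \ge 2 \cdot 2^{-\deg f}$ (using $\deg g \le \deg f - 1$), together with $\prob{h \ne 0} \ge 2^{-\deg f}$. The key trick is the elementary containment $\{g \ne 0\} \subseteq \{h \ne 0\} \cup \{g+h \ne 0\}$ (since $h$ and $g+h$ differ by $g$, at least one is nonzero whenever $g$ is), giving $\prob{g+h \ne 0} \ge \prob{g \ne 0} - \prob{h \ne 0}$. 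A short case split finishes: if $\prob{h \ne 0} \ge \prob{g \ne 0}$, then $\prob{h \ne 0} \ge 2 \cdot 2^{-\deg f}$, so $(1-p)\prob{h \ne 0} \ge \tfrac12 \cdot 2 \cdot 2^{-\deg f} = 2^{-\deg f}$ using $p \le 1/2$; otherwise, $\prob{g + h \ne 0} \ge \prob{g \ne 0} - \prob{h \ne 0} \ge 0$, and
\[
(1-p)\prob{h \ne 0} + p\prob{g+h \ne 0} \ge (1-2p)\prob{h \ne 0} + p\prob{g \ne 0} \ge (1-2p)\cdot 2^{-\deg f} + p \cdot 2 \cdot 2^{-\deg f} = 2^{-\deg f},
\]
where the sign $1 - 2p \ge 0$ again uses $p \le 1/2$.

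The overall structure is a biased-measure refinement of the Schwartz--Zippel/Reed--Muller inductive proof that a nonzero multilinear polynomial of degree $\le r$ on $\{0,1\}^n$ has at least $2^{n-r}$ nonzeros (this is the Combinatorial Nullstellensatz content alluded to in the statement): the $f(\vec{0}) \ne 0$ hypothesis ensures that the polynomial $h$ on the $x_n = 0$ side of the split always has nonzero constant term, and the $p \le 1/2$ hypothesis is what converts the unbiased count into the desired biased-measure bound.
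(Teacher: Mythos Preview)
Your proof is correct. The paper does not give its own argument for this proposition: it is quoted from Fox--Kwan--Sauermann, with the remark that they derive it from Alon's Combinatorial Nullstellensatz. Your route is instead a self-contained induction on the number of variables that avoids invoking the Nullstellensatz as a black box. The key device---using the inclusion $\{g\ne 0\}\subseteq\{h\ne 0\}\cup\{g+h\ne 0\}$ to salvage a lower bound on $\prob{(g+h)(\vec\xi(p))\ne 0}$ when $(g+h)(\vec 0)=0$, and then cashing in the extra factor of $2$ from the inductive hypothesis applied to $g$ (which has strictly smaller degree)---is exactly what is needed to push the Schwartz--Zippel-type recursion through under the biased measure, and your case split on the sign of $\prob{h\ne 0}-\prob{g\ne 0}$ handles the bookkeeping cleanly. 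The end result is the same bound as in Fox--Kwan--Sauermann; the advantage of your version is that it is entirely elementary and does not rely on an external black box.
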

While this is sharp for $p = 1/2$, it seems plausible that one can obtain a better bound for small $p$. In this direction, Fox, Kwan, and Sauermann~\cite[Theorem 1.8]{fox-kwan-sauermann-21} proved that for any $\ell \neq 0$ and a polynomial $f$ with \emph{non-negative} coefficients and no constant term, one has $\prob{f(\vec{\xi}(p)) = \ell} \le 1/e + o(1)$ as $p \to 0$.
In the case $r = 2$, \cref{FKS-nullstellensatz} gives the bound of $3/4$.
We obtain the following (marginal) improvement over this bound for a certain class of quadratic polynomials.

\begin{proposition} \label{better-than-3/4}
    Let $p > 0$ be sufficiently small. Consider a quadratic polynomial $f$ with a nonzero constant term such that its degree-$2$ coefficients lie in $\{0, 1\}$. Then, 
    \[
    \prob{f(\vec{\xi}(p)) = 0} < 0.725.
    \]
\end{proposition}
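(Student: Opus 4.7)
The plan is to reduce to the setting of Fox--Kwan--Sauermann~\cite[Theorem 1.8]{fox-kwan-sauermann-21}, which gives $\prob{g(\vec\xi(p)) = \ell} \le 1/e + o(1)$ as $p \to 0$ for any polynomial $g$ with non-negative coefficients and no constant term and any $\ell \neq 0$. Write $f(x) = c + \sum_i a_i x_i + \sum_{ij \in E} x_i x_j$ and let $I := \{i : a_i < 0\}$; I would condition on $\vec\xi_I$. For each outcome $s \in \{0,1\}^I$, the restricted polynomial $f^{(s)}$ in the remaining variables $x_j$ (with $j \notin I$) has non-negative linear coefficients $a_j + |\{i \in I : s_i = 1,\, ij \in E\}|$, quadratic coefficients in $\{0,1\}$, and constant term
\[
c^{(s)} := c + \sum_{i \in I,\, s_i = 1} a_i + \bigl|E\bigl[\{i \in I : s_i = 1\}\bigr]\bigr|.
\]
Applying Fox--Kwan--Sauermann to $g^{(s)} := f^{(s)} - c^{(s)}$ (which has non-negative coefficients and no constant term) and splitting on the sign of $c^{(s)}$, I obtain
\[
\prob{f(\vec\xi(p)) = 0} \le \alpha + \beta/e + o(1), \qquad \alpha := \prob{c^{(\vec\xi_I)} = 0},\quad \beta := \prob{c^{(\vec\xi_I)} < 0},
\]
where the $1/e + o(1)$ factor comes from the case $c^{(s)} < 0$, the trivial bound $1$ from the case $c^{(s)} = 0$, and $0$ from the case $c^{(s)} > 0$ (since $g^{(s)} \ge 0$).

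The remaining task is to show $\alpha + \beta/e < 0.725$. Here, $c^{(\vec\xi_I)}$ is itself a quadratic polynomial in $\vec\xi_I \sim \Ber(p)^I$ with nonzero constant $c$, strictly negative linear coefficients, and quadratic coefficients in $\{0,1\}$. In the extremal configurations the polynomial is built from a complete multipartite graph on $I$: for instance, with $c = 1$, $a_i = -1$, and $E[I] = K_{|I|/2, |I|/2}$, one has $c^{(\vec\xi_I)} = (1 - k_1)(1 - k_2)$ depending only on the part sizes $k_1, k_2$, and a Poisson-limit calculation (with $\mu = (|I|/2)p$) gives $\alpha = 1 - (1 - \mu e^{-\mu})^2$ and $\beta = 2 e^{-\mu}(1 - (1 + \mu) e^{-\mu})$, which at $\mu = 1$ yield $\alpha \approx 0.601$ and $\beta \approx 0.194$, i.e.\ $\alpha + \beta/e \approx 0.672 < 0.725$. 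Analogous calculations for complete $m$-partite structures and the ``all-
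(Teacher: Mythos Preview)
Your reduction via Fox--Kwan--Sauermann is set up correctly: after conditioning on $\vec\xi_I$, the remaining polynomial $g^{(s)}=f^{(s)}-c^{(s)}$ does have non-negative coefficients and no constant term, and the trichotomy on the sign of $c^{(s)}$ yields
\[
\prob{f(\vec\xi(p))=0}\le \alpha+\beta/e+o(1),\qquad \alpha=\prob{c^{(\vec\xi_I)}=0},\quad \beta=\prob{c^{(\vec\xi_I)}<0}.
\]
The gap is in the ``remaining task'': you never prove $\alpha+\beta/e<0.725$ in general. You compute one example (complete bipartite on $I$), assert without justification that extremal configurations are complete multipartite, and then the argument breaks off mid-sentence. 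More fundamentally, the reduction is close to circular. Observe that $c^{(\vec\xi_I)}$ is itself a quadratic polynomial in Bernoulli variables with nonzero constant term and degree-$2$ coefficients in $\{0,1\}$---exactly the class in the proposition, only with the extra feature that all linear coefficients are strictly negative. In particular, when \emph{every} linear coefficient of $f$ is negative (e.g.\ the paper's own extremal example $f=(x_1+\cdots+x_n-1)(x_{n+1}+\cdots+x_{2n}-1)$), one has $I=[n]$, there are no ``remaining'' variables, and $c^{(\vec\xi_I)}=f(\vec\xi)$ identically. Then $\alpha=\prob{f(\vec\xi)=0}$ and your inequality collapses to $\prob{f=0}\le\prob{f=0}+\beta/e+o(1)$, which is vacuous. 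So the extra constraint ``linear coefficients negative'' would have to do all the work, and you give no mechanism to exploit it.

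For comparison, the paper's proof takes a completely different route. It first observes (by a random-substitution trick) that it suffices to verify the bound at a single fixed value $p=0.388$. Then, by induction on the number of variables, it reduces to the case where \emph{every} linear coefficient equals $-\ell$ (the constant term); this is where your set $I$ would be all of $[n]$. From there the argument is a finite case analysis on the graph $G$ of degree-$2$ terms: if $G$ is not complete multipartite one conditions on the neighbourhood of a suitable vertex; if $G$ is complete multipartite but not complete one conditions on the variables outside one part; if $G$ is complete the polynomial factors as $\tfrac12(\sum\xi_i-2\ell)(\sum\xi_i-1)$ and one bounds $\prob{\Bin(s,p)\in\{1,2\ell\}}$ directly. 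None of these steps invokes Fox--Kwan--Sauermann.
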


The proof of \cref{better-than-3/4} involves a somewhat tedious case analysis and is postponed until \cref{sec:proof-better-than-3/4}.
Our motivation to prove it comes from the numerical inequalities
\[
0.725 \cdot \frac{1}{e} < \frac{2}{e^2} < \frac{3}{4} \cdot \frac{1}{e}.
\]
Therefore, an attempt to prove \cref{close-to-multiple}(2) using \cref{FKS-nullstellensatz} instead of \cref{better-than-3/4} would yield a slightly worse bound in the case $a = 1$. This would still be sufficient to deduce \cref{better-than-1/e} but not \cref{2/e^2}, since the latter asks for the optimal bound of $2/e^2 + o(1)$.
We also remark that the bound in \cref{better-than-3/4} is likely not optimal: the best lower bound we know is $2/e - 1/e^2 + o(1) \approx 0.6$ (attained by the polynomial $f(x_1, \ldots, x_{2n}) = (x_1 + \ldots + x_n - 1)(x_{n+1} + \ldots + x_{2n} - 1)$ with $p = 1/n$).

Next, we need the following classical result about antichains in the Boolean hypercube, discovered independently by Bollob\'as~\cite{bollobas}, Lubell~\cite{lubell}, Yamamoto~\cite{yamamoto}, and Meshalkin~\cite{meshalkin}.

\begin{proposition}[BLYM inequality] \label{BLYM}
    Let $\mc{A} \subseteq 2^{[N]}$ be an antichain (that is, there are no $A_1, A_2 \in \mc{A}$ such that $A_1 \subsetneq A_2$). Then, 
    \[
    \sum_{A \in \mc{A}} \binom{N}{|A|}^{-1} \le 1.
    \]
\end{proposition}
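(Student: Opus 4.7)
The plan is to prove the BLYM inequality via the classical double-counting / random maximal chain argument of Lubell. First, I would introduce a uniformly random permutation $\pi$ of $[N]$, which induces a maximal chain
\[
C_\pi = \bigl\{\emptyset \subset \{\pi(1)\} \subset \{\pi(1),\pi(2)\} \subset \cdots \subset [N]\bigr\}
\]
of length $N+1$ in the Boolean lattice $2^{[N]}$. For any fixed set $A \subseteq [N]$ with $|A|=j$, the event $A \in C_\pi$ is precisely the event $\{\pi(1),\ldots,\pi(j)\} = A$, so by symmetry $\prob{A \in C_\pi} = j!(N-j)!/N! = \binom{N}{j}^{-1}$.

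Next I would exploit the antichain condition. Any two distinct members of $C_\pi$ are comparable (one contains the other), whereas by assumption no two distinct members of $\mathcal{A}$ are comparable. Hence $|C_\pi \cap \mathcal{A}| \le 1$ always. Taking expectations and applying linearity yields
\[
\sum_{A \in \mathcal{A}} \binom{N}{|A|}^{-1} \;=\; \sum_{A \in \mathcal{A}} \prob{A \in C_\pi} \;=\; \expected{|C_\pi \cap \mathcal{A}|} \;\le\; 1,
\]
which is exactly the desired inequality.

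Since this is a standard textbook result, I do not anticipate any substantive obstacle; the only care needed is in verifying that $\prob{A \in C_\pi}$ depends only on $|A|$ and equals $\binom{N}{|A|}^{-1}$, and in observing that distinct elements of a chain are comparable so an antichain meets the chain in at most one point. If desired, an equivalent approach would count pairs $(A, \pi)$ with $A \in \mathcal{A}$ and $A \in C_\pi$ in two ways: by the antichain property there are at most $N!$ such pairs, while for each $A \in \mathcal{A}$ there are exactly $|A|!(N-|A|)!$ choices of $\pi$, giving the same bound after dividing by $N!$.
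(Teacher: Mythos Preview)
Your argument is correct; this is precisely Lubell's classical random-chain (equivalently, double-counting) proof. The paper does not give its own proof of this proposition at all: it simply states the BLYM inequality and attributes it to Bollob\'as, Lubell, Yamamoto, and Meshalkin, so there is nothing to compare your approach against.
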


Also, we use the following statement about approximating the binomial distribution with the Poisson distribution, which follows (for example) from \cite[Theorem 1]{barbour-hall-1984}.

\begin{proposition} \label{Poisson-formula} 
    For each $p \in (0, 1)$ and $N \in \NN$, we have $\mathrm{d_{TV}}(\Bin(N, p), \Poi(pN)) \le p$. In particular, for every $0 \le M \le N$, 
    \[
    \Big|\prob{\Bin(N, p) = M} - \prob{\Poi(pN) = M}\Big| = \left|\binom{N}{M} p^M (1-p)^{N-M} - \frac{(pN)^M}{e^{pN} M!}\right| \le p.
    \]
\end{proposition}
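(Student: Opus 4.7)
The plan is to establish the total-variation bound via the Chen--Stein method for Poisson approximation, and to deduce the pointwise estimate immediately from the definition of $\mathrm{d_{TV}}$.

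Set $\lambda = Np$, let $X_1, \ldots, X_N$ be i.i.d.\ $\Ber(p)$, and put $W = X_1 + \cdots + X_N \sim \Bin(N, p)$. For each $A \subseteq \ZZ_{\ge 0}$, I would consider the Stein equation for $\Poi(\lambda)$,
\[
\lambda\, g(k+1) - k\, g(k) = \mathbf{1}_A(k) - \prob{\Poi(\lambda) \in A} \qquad (k \ge 0),
\]
whose unique bounded solution $g = g_A$ (with $g_A(0) = 0$) satisfies the well-known ``Stein magic factor'' bound $\| \Delta g_A \|_\infty := \sup_{k \ge 0} |g_A(k+1) - g_A(k)| \le (1 - e^{-\lambda})/\lambda$. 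Establishing this last inequality uniformly in $A$ is the technical heart of the proof: it requires a careful analysis of the explicit solution of the Stein recursion, and is classically due to Barbour and Eagleson.

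Given the magic factor bound, the rest is routine. Substituting $k = W$ and taking expectations yields $\prob{W \in A} - \prob{\Poi(\lambda) \in A} = \expected{\lambda g_A(W+1) - W g_A(W)}$. Since $X_i$ is independent of $W_i := W - X_i$ and takes values in $\{0, 1\}$, one has $\expected{X_i g_A(W)} = p\, \expected{g_A(W_i + 1)}$, and a short algebraic manipulation rewrites the right-hand side as
\[
\sum_{i=1}^N p^2\, \expected{g_A(W_i + 2) - g_A(W_i + 1)},
\]
whose magnitude is at most $N p^2 \cdot \|\Delta g_A\|_\infty \le (1 - e^{-\lambda}) p \le p$. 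Taking the supremum over $A \subseteq \ZZ_{\ge 0}$ yields $\mathrm{d_{TV}}(\Bin(N, p), \Poi(Np)) \le p$. The ``in particular'' estimate then follows by specialising to $S = \{M\}$ in the definition of total-variation distance, with the displayed identity being just the explicit form of the two probability mass functions.

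As a sanity check on why this refined method is necessary: a naive coupling of $N$ independent pairs $(X_i, Z_i)$ with $X_i \sim \Ber(p)$, $Z_i \sim \Poi(p)$ gives only the bound $\mathrm{d_{TV}}(\Bin(N,p), \Poi(Np)) \le N \cdot \mathrm{d_{TV}}(\Ber(p), \Poi(p)) = N p(1 - e^{-p}) \le N p^2$, which is vacuous as soon as $Np \gg 1$. The Chen--Stein approach above is genuinely needed to obtain the sharper bound $\le p$ throughout the whole range of parameters.
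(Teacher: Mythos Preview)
Your argument is correct. The paper itself does not prove this proposition but simply cites Theorem~1 of Barbour and Hall, whose proof is precisely the Chen--Stein computation you sketch; so your proposal is essentially an unpacking of the paper's citation.
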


Next, we combine \cref{BLYM,Poisson-formula} to derive the following convenient lemma.

\begin{lemma} \label{antichain-expectation}
    Let $\mc{A}$ be an antichain of subsets of some finite set $B$, and let $\varphi: 2^{B} \to [0, 1]$ be a function such that $\varphi(A) = 0$ for every $A \notin \mc{A}$. Then, for each $p \in (0, 1)$ and a $p$-random subset $B_p$ of $B$ (including each element of $B$ with probability $p$ independently), 
    \[
    \expected{\varphi(B_p)} \le \max_{A \in \mc{A}} \left(\frac{|A|^{|A|}}{e^{|A|} |A|!} \varphi(A)\right) + p.
    \] 
\end{lemma}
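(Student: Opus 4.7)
The plan is to expand the expectation directly and combine the Poisson approximation of \cref{Poisson-formula} with the BLYM inequality (\cref{BLYM}). Writing $N = |B|$ and using that $\varphi$ vanishes off $\mc{A}$, the first step is to rewrite
\[
\expected{\varphi(B_p)} = \sum_{A \in \mc{A}} \varphi(A)\, p^{|A|}(1-p)^{N-|A|} = \sum_{A \in \mc{A}} \varphi(A) \binom{N}{|A|}^{-1} \prob{\Bin(N,p) = |A|}.
\]
The purpose of multiplying and dividing by $\binom{N}{|A|}$ is to simultaneously expose the binomial probability at level $|A|$ (which will be amenable to Poisson approximation) and the BLYM weight $\binom{N}{|A|}^{-1}$.

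Next, I would use that for any integer $k \ge 0$, the Poisson probability $\prob{\Poi(\lambda) = k} = \lambda^k e^{-\lambda}/k!$ is maximised over $\lambda \ge 0$ at $\lambda = k$ with value $k^k/(e^k k!)$; combined with \cref{Poisson-formula}, this yields $\prob{\Bin(N,p) = k} \le k^k/(e^k k!) + p$. Multiplying by $\varphi(A) \in [0,1]$ and setting $M := \max_{A \in \mc{A}} \frac{|A|^{|A|}}{e^{|A|} |A|!}\varphi(A)$, I obtain
\[
\varphi(A)\, \prob{\Bin(N,p) = |A|} \le \frac{|A|^{|A|}}{e^{|A|} |A|!}\varphi(A) + p\, \varphi(A) \le M + p
\]
for every $A \in \mc{A}$.

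Finally, substituting into the first display and invoking the BLYM inequality in the form $\sum_{A \in \mc{A}} \binom{N}{|A|}^{-1} \le 1$ gives
\[
\expected{\varphi(B_p)} \le (M+p) \sum_{A \in \mc{A}} \binom{N}{|A|}^{-1} \le M + p,
\]
as required. The argument is essentially a one-liner once these ingredients are assembled, and I do not anticipate a genuine obstacle; the only conceptual point is to recognise that the factor $p^{|A|}(1-p)^{N-|A|}$ naturally splits into the product of the BLYM weight $\binom{N}{|A|}^{-1}$ and a binomial probability which is controlled in terms of its Poisson peak at $\lambda = |A|$, thereby matching exactly the shape of the desired upper bound.
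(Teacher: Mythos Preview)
Your proof is correct and follows essentially the same approach as the paper: both arguments combine the Poisson approximation bound $\prob{\Bin(N,p)=k}\le k^k/(e^k k!)+p$ with the BLYM inequality applied to the antichain $\mc{A}$. The only cosmetic difference is the order of operations---the paper first uses BLYM to pass from the sum to a maximum and then applies the Poisson bound, whereas you first apply the Poisson bound termwise and then sum the BLYM weights---but the ingredients and overall structure are identical.
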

\begin{proof}
    \cref{Poisson-formula}, combined with a standard calculation, implies that for $0 \le M \le N$, 
    \begin{equation} \label{eq:use-Poisson}
    \binom{N}{M} p^M (1-p)^{N-M} \le \frac{(pN)^M}{e^{pN} M!} + p \le \frac{M^M}{e^M M!} + p.        
    \end{equation}
    Therefore,
    \begin{align*}
    \expected{\varphi(B_p)} = \sum_{A \in \mc{A}} p^{|A|} (1-p)^{|B| - |A|}\varphi(A) &\le \max_{A \in \mc{A}} \left(\binom{|B|}{|A|} \cdot p^{|A|} (1-p)^{|B| - |A|} \varphi(A)\right) \\
    &\le \max_{A \in \mc{A}} \left(\Big(\frac{|A|^{|A|}}{e^{|A|} |A|!} + p\Big) \varphi(A)\right) \le \max_{A \in \mc{A}} \left(\frac{|A|^{|A|}}{e^{|A|} |A|!}\varphi(A)\right) + p.
    \end{align*}
    Here in the first line we used that $\varphi$ is supported on $\mc{A}$, and that $\sum\limits_{A \in \mc{A}} \binom{|B|}{|A|}^{-1} \le 1$ by \cref{BLYM}. In the second line, the first inequality is by \cref{eq:use-Poisson}, and the second inequality follows from the fact that $0 \le \varphi(A) \le 1$.
\end{proof}

\begin{proof}[\textbf{Proof of \cref{close-to-multiple}}]
    We write $(\a_1, \ldots, \a_M) \gg (\beta_1, \ldots, \beta_N)$ as a shorthand for ``each of $\a_1, \ldots, \a_M$ is sufficiently large in terms of $\beta_1, \ldots, \beta_N$''. Our goal is to prove the upper bound on $\ind(k, \ell)$ for $k \gg (a, C, 1/\eps)$.  For convenience, we introduce three intermediate parameters $m, t$, and $R$ satisfying 
    \[
    R \gg 1/\eps, \qquad k \gg (m, t) \gg (R, a, C, 1/\eps).
    \]

    Recall that, by definition, $\ind(k, \ell)$ is the limit of a non-increasing sequence $N(n, k, \ell) / \binom{n}{k}$. Thus, it is sufficient to prove the upper bound on $N(n, k, \ell) / \binom{n}{k}$ for $n = Rk$.     
    Let $G$ be a graph on the vertex set $V$ of size $n = Rk$, and let $X$ be a uniformly random $k$-subset of $V$. Then we need to estimate $\prob{e(G[X]) = \ell}$. We may assume that $G$ has at least $\ell$ edges (otherwise, this probability is zero).

    Since $m \gg (R, a)$, if $G$ has a matching of size $m/2$ then the desired bound follows from \cref{large-matching-corollary}. Therefore, we can assume that the maximum matching in $G$ has size less than $m/2$. Let $U$ be the vertex set of this matching. Then $|U| \le m$, and $V \setminus U$ is an independent set in $G$. 
    
    For each $u \in U$, denote $d(u) = |N_G(u) \cap (V \setminus U)| \le n$. Next, we find a subset $W \subseteq U$ consisting of its ``highest degree'' vertices. 

    \begin{claim} \label{large-degrees}
        There exists a non-empty subset $W \subseteq U$ such that for each $w \in W$, we have $d(w) \gtrsim_{m, t, C} k$ and $d(w) \ge 2t \cdot e(G[V \setminus W])$.
    \end{claim}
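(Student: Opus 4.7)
The plan is to construct $W$ greedily from the highest-degree vertices of $U$. Order $U = \{u_1, \ldots, u_s\}$ (with $s \le m$) so that $d(u_1) \ge \ldots \ge d(u_s)$, and define $W_i := \{u_1, \ldots, u_i\}$ and $E_i := e(G[V \setminus W_i])$. Since $V \setminus U$ is independent (it sits outside a maximal matching of $G$), one has $E_s = 0$, so the set of indices $i \in \{1, \ldots, s\}$ satisfying $d(u_i) \ge 2t E_i$ is non-empty; I would let $i^*$ be its minimum and take $W := W_{i^*}$. Then $W$ is automatically non-empty, and because the sequence $d(u_i)$ is non-increasing, every $w \in W$ satisfies $d(w) \ge d(u_{i^*}) \ge 2t E_{i^*} = 2t \cdot e(G[V \setminus W])$, giving one of the two required conditions.

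The second condition is the lower bound $d(u_{i^*}) \gtrsim_{m, t, C} k$, which I would derive by exploiting the minimality of $i^*$. Concretely, for each $j < i^*$ the failure of the defining inequality gives $d(u_j) < 2t E_j$. Combined with the elementary identity $E_{j-1} = E_j + d(u_j) + \eta_j$, where $0 \le \eta_j \le |U \setminus W_j| \le m$ accounts for edges from $u_j$ to $U \setminus W_j$, this produces the recursion $E_{j-1} \le (1+2t) E_j + m$ for $1 \le j \le i^* - 1$, together with the single estimates $E_{i^* - 1} \le E_{i^*} + d(u_{i^*}) + m$ and $E_{i^*} \le d(u_{i^*})/(2t)$. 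Unrolling over at most $m$ steps then yields $e(G) = E_0 \le (1+2t)^{m-1}(1 + 1/(2t))(d(u_{i^*}) + m)$. Since we may assume $e(G) \ge \ell \ge a(k-a) - C$ and $k$ is taken sufficiently large in terms of $a, C, m, t$, rearranging gives $d(u_{i^*}) \gtrsim_{m, t, C} k$, which transfers to every $w \in W$ by monotonicity of the $d(u_i)$.

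The only potential obstacle is the geometric blowup factor $(1+2t)^{m-1}$ introduced by telescoping. Since $m$ and $t$ are constants fixed before $k$ tends to infinity, and $i^* \le m$ is bounded independently of $k$, this factor is benign: the resulting implicit constant depends only on $m, t, C$ (and on $a$ through $e(G) \ge a(k-a) - C$), not on $k$. The choice $W = W_{i^*}$ is essentially forced once one demands that removing the "important" vertices leaves behind an edge set much smaller than any individual remaining degree; the greedy/minimal-index construction is what makes this possible with just a constant number of iterations.
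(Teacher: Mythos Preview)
Your proof is correct, but takes a genuinely different route from the paper's. Both arguments sort $U$ by decreasing $d(\cdot)$ and take $W$ to be an initial segment, but the \emph{stopping rules} differ. The paper stops at the first large multiplicative gap in the degree sequence (the smallest $j$ with $d(u_j) > 4tm\cdot d(u_{j+1})$): this makes the lower bound $d(u_j) \ge d(u_1)/(4tm)^m \gtrsim_{m,t,C} k$ immediate via a product telescope, but then the inequality $d(u_j) \ge 2t\cdot e(G[V\setminus W])$ must be verified separately. You instead stop at the first index where the \emph{target} inequality $d(u_i) \ge 2t\,E_i$ already holds, getting that condition for free, and then recover the lower bound on $d(u_{i^*})$ by unrolling the additive recursion $E_{j-1} \le (1+2t)E_j + m$ back to $E_0 = e(G)$. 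Both telescopes run for at most $m$ steps, so the blowup factors $(4tm)^m$ versus $(1+2t)^{m-1}$ are equally harmless; your approach is arguably more direct since it builds $W$ to satisfy the key property from the outset.

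One small cosmetic point: you write that the implicit constant depends on $a$ through $e(G) \ge a(k-a) - C$, but the claim asks for $\gtrsim_{m,t,C}$. This is easily repaired by noting that $a(k-a) \ge k-1$ for every integer $a \ge 1$ (once $k > a$), so in fact $e(G) \ge k - 1 - C$ and no $a$-dependence is needed. The paper's proof uses exactly this observation implicitly.
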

    \begin{claimproof}
        Let $u_1, \ldots, u_{|U|}$ be the vertices of $U$ ordered by non-increasing values of $d(u_i)$: that is, $d(u_i) \ge d(u_{i+1})$ for every $i \ge 1$. First we note that $d(u_1) \gtrsim_{m, C} k$. Indeed, recalling that $V \setminus U$ is an independent set and that $e(G) \ge \ell \ge k - C$, we obtain 
        \[
        m \cdot d(u_1) \ge e(G[U, V \setminus U]) \ge \ell - |U|^2 \ge k - C - m^2.
        \]

        Let $j$ be the smallest index $i$ such that $d(u_i) > 4tm \cdot d(u_{i+1})$ (or $j = |U|$ if no such index $i$ exists), and set $W = \{u_1, \ldots, u_j\}$. Then, for every $1 \le j_0 \le j$, 
        \[
        d(u_{j_0}) \ge d(u_j) \ge d(u_1) / (4tm)^m \gtrsim_{m, t, C} k.
        \]
        To check the second condition, we first observe that $e(G[U \setminus W, V \setminus U]) \le d(u_j)/(4t)$. Indeed, if $W = U$ then this is trivially true. Otherwise,
        \[
        e(G[U \setminus W, V \setminus U]) \le \sum_{i = j+1}^{|U|} d(u_i) \le m \cdot d(u_{j+1}) \le d(u_j)/(4t)
        \]
        where the last inequality is by the definition of $j$. Therefore,
        \[
        e(G[V \setminus W]) \le e(G[U \setminus W, V \setminus U]) + |U|^2 \le d(u_j) / (4t) + m^2 \le d(u_j) / (2t),
        \]
        where in the last inequality we used that $d(u_j) \gtrsim_{m, t, C} k$ and $k \gg (m, t, C)$.
    \end{claimproof}

    Let $d = \min\limits_{w \in W} d(w) \ge 2t \cdot e(G[V \setminus W])$. Call a subset $W' \subseteq W$ \emph{good} if 
    \[
    \Big| \frac{1}{R} \sum_{w \in W'} d(w) - \ell \Big| \le \frac{d}{t}.
    \]
    Note that, since $t \gg R$, the good subsets of $W$ form an antichain. Furthermore, as $\ell \ge ak - C$, for every good subset $W'$ we have
    \[
    ak - C \le \ell \le \frac{1}{R} \sum_{w \in W'} d(w) + \frac{d}{t} \le \frac{n \cdot |W'|}{R} + \frac{n}{t} = k \cdot \left(|W'|+ \frac{R}{t}\right).
    \]
    Since $t \gg R$, and $k \gg (a, C)$, this implies that $|W'| \ge a$.    Next, we check that if $W \cap X$ is not good, then we are unlikely to have $e(G[X]) = \ell$.

    \begin{claim} \label{bad-W'}
        $\prob{e(G[X]) = \ell \text{ and } W \cap X \text{ is not good}} \le \exp(-\Omega_{m, t, R, C}(k))$.
    \end{claim}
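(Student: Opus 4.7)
The plan is to condition on the value $W' := W \cap X$ and, for each non-good $W'$, bound $\prob{e(G[X]) = \ell \mid W \cap X = W'}$ by concentration on the slice. Setting $Y := X \cap (V \setminus W)$, observe that conditional on $W \cap X = W'$ we have $Y \sim \Slice(n - |W|, k - |W'|)$, and one can decompose
\[
e(G[X]) = e(G[W']) + F(Y) + e(G[Y]), \qquad F(Y) := \sum_{w \in W'} |N_G(w) \cap Y|.
\]
Here $e(G[W']) \le m^2$ is a constant, and by \cref{large-degrees} we have $0 \le e(G[Y]) \le e(G[V \setminus W]) \le d/(2t)$ deterministically, so the random quantity $F(Y)$ carries essentially all the fluctuation of $e(G[X])$.

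Next I would check that the conditional mean $\mu := \expected{F(Y) \mid W \cap X = W'}$ is close to $\frac{1}{R}\sum_{w \in W'} d(w)$. Indeed, $\mu = \frac{k-|W'|}{n-|W|}\sum_{w \in W'} |N_G(w) \cap (V \setminus W)|$; using that $|N_G(w) \cap (V \setminus W)|$ differs from $d(w)$ by at most $|U \setminus W| \le m$, and that $\left|\frac{k-|W'|}{n-|W|} - \frac{k}{n}\right| = O(m/n)$, one obtains $\bigl|\mu - \frac{1}{R}\sum_{w \in W'} d(w)\bigr| = O(m^2)$. Since $d \gtrsim_{m,t,C} k$ and $k \gg (m,t,R,C)$, this error is at most $d/(8t)$; combined with non-goodness, triangle inequality yields $|\ell - \mu| \ge 7d/(8t)$. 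If in addition $e(G[X]) = \ell$, then $F(Y) = \ell - e(G[W']) - e(G[Y])$ lies within $m^2 + d/(2t)$ of $\ell$, which forces $|F(Y) - \mu| \ge d/(4t)$.

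Since flipping any single coordinate of $Y$ changes $F(Y)$ by at most $|W'| \le m$, the concentration inequality on the slice (\cref{concentration-on-the-slice}) applied with $a_i = m$ gives
\[
\prob{|F(Y) - \mu| \ge d/(4t) \mid W \cap X = W'} \le 2\exp\left(-\frac{(d/(4t))^2}{8(n - |W|)m^2}\right) \le \exp(-\Omega_{m,t,R,C}(k)),
\]
using $d = \Omega_{m,t,C}(k)$ and $n = Rk$. A union bound over the at most $2^{|W|} \le 2^m$ non-good subsets $W' \subseteq W$ is absorbed into the same bound. The main subtlety, I expect, is that one should handle $e(G[Y])$ via its deterministic bound $d/(2t)$ rather than by concentration: the Lipschitz constants of $e(G[Y])$ can be large because vertices in $U \setminus W$ may have high degree inside $V \setminus W$, so an Azuma-style bound on $e(G[Y])$ alone would not reach $\exp(-\Omega(k))$; but since its range is only $d/(2t)$, it cannot reposition $F(Y)$ too close to $\mu$, and concentration is only needed for the genuinely well-behaved part $F(Y)$.
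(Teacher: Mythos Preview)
Your proof is correct and follows essentially the same approach as the paper: condition on $W\cap X=W'$, separate out a deterministically small part (bounded via $e(G[V\setminus W])\le d/(2t)$ and $m^2$), show the remaining linear polynomial has mean within $O(m^2)$ of $\tfrac{1}{R}\sum_{w\in W'}d(w)$, and apply \cref{concentration-on-the-slice} with Lipschitz constants $\le m$. The only cosmetic difference is that the paper splits $X$ according to $U$ rather than $W$ (working with $\sum_{w\in W'}|N_G(w)\cap X\cap(V\setminus U)|$ instead of your $F(Y)$), but this makes no material difference to the argument.
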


    \begin{claimproof}
        It suffices to prove that for every $W' \subseteq W$ which is not good, 
        \[
        \prob{e(G[X]) = \ell \mid W \cap X = W'} \le \exp(-\Omega_{m, t, R, C}(k)).
        \]
        Suppose that $W \cap X = W'$ and $e(G[X]) = \ell$. Then,
        \[
        \ell = e(G[X]) = e(G[X \cap U]) + e(G[X \cap (U \setminus W), X \cap (V \setminus U)]) + \sum_{w \in W'} |N_G(w) \cap (X \cap (V \setminus U))|.
        \]
        The first term is at most $|U|^2 \le m^2$. The second term is at most $d/(2t)$ by \cref{large-degrees}. Therefore, as $W'$ is not good,
        \begin{equation} \label{eq:bad-W'}
        \left|\sum_{w \in W'} |N_G(w) \cap (X \cap (V \setminus U))| - \frac{1}{R} \sum_{w \in W'} d(w) \right| \ge \frac{d}{t} - \frac{d}{2t} - m^2 = \frac{d}{2t} - m^2.
        \end{equation}
        On the other hand, $\sum\limits_{w \in W'} |N_G(w) \cap (X \cap (V \setminus U))|$ may be interpreted as a linear polynomial on $\Slice(n-|W|, k-|W'|)$, and its expected value satisfies
        \[
        \left|\,\mathbb E\left[{\sum_{w \in W'} |N_G(w) \cap (X \cap (V \setminus U))|}\right] - \frac{1}{R}\sum_{w \in W'} d(w)\right| = \left| \frac{k - |W'|}{n - |W|} - \frac{1}{R}\right| \cdot \sum_{w \in W'} d(w) \lesssim \frac{m}{n} \cdot m n = m^2.
        \]
        Since $d \gtrsim_{m, t, C} k$, and $k \gg (m, t, C)$, we note that if \cref{eq:bad-W'} holds, then this polynomial takes a value at least $d/(4t)$ away from its expectation. Applying \cref{concentration-on-the-slice} (our concentration inequality on the slice) with $a_v = |W'| \le m$, we conclude that this happens with probability at most
        \[
        2\exp\left(\frac{-(d/(4t))^2}{8 m^2 n}\right) \le \exp(-\Omega_{m, t, R, C}(k)),
        \]
        where in the last inequality we used that $d \gtrsim_{m, t, C} k$ and $n = Rk$.
    \end{claimproof}    
    Define a function $\varphi: 2^W \to [0, 1]$ as follows: if $W' \subseteq W$ is good, then let 
    \[
    \varphi(W') = \prob{e(G[X]) = \ell \mid W \cap X = W'};
    \] 
    otherwise, set $\varphi(W') = 0$.
    Since $k \gg (m, t, R, C, 1/\eps)$, \cref{bad-W'} implies that
    \begin{equation} \label{eq:sum-good-W'}
    \prob{e(G[X]) = \ell} \le \prob{e(G[X]) = \ell \text{ and } W \cap X \text{ is good}} + \eps/4 = \expected{\varphi(W \cap X)} + \eps/4.
    \end{equation}
    Set $p = k/n = 1/R$ and let $V_p$ denote a $p$-random subset of $V$. Consider the function $F:\{0,1\}^V\to [0,1]$ which sends the characteristic vector $\mathds 1_A\in \{0,1\}^V$ of a set $A\subseteq V$ to the value $\varphi(W \cap A)$, and note that $F$ depends on at most $|W| \le m$ coordinates. Applying \cref{product-slice} to this function $F$, we obtain
    \begin{align*}
    \Big|\expected{\varphi(W \cap X)} - \expected{\varphi(W \cap V_p)} \Big| &\le \int_{0}^1 \Big|\prob{\varphi(W \cap X) \ge t} - \prob{\varphi(W \cap V_p) \ge t}\Big| \,dt \\
    &\le \mathrm{d_{TV}}(\varphi(W \cap X), \varphi(W \cap V_p)) \le \max\left(\frac{|W|}{n}, \frac{3}{k}\right) \le \frac{m+3}{k}.
    \end{align*}
    But $W \cap V_p$ is just a $p$-random subset $W_p$ of $W$. Then, since $k \gg (m, 1/\eps)$, we can bound the expression from \cref{eq:sum-good-W'} as
    \[
    \prob{e(G[X]) = \ell} \le \expected{\varphi(W_p)} + (m+3)/k + \eps/4 \le \expected{\varphi(W_p)} + \eps / 2.
    \]
    Recall that $\varphi$ is supported on the antichain of good subsets of $W$, and that $p = 1/R \le \eps / 4$ (because $R \gg 1/\eps$). Therefore, we can bound the expected value $\expected{\varphi(W_p)}$ using \cref{antichain-expectation}, and obtain that
    \begin{equation} \label{eq:apply-antichain-expectation}
    \prob{e(G[X]) = \ell} \le \expected{\varphi(W_p)} + \eps/2 \le \max_{W' \subseteq W \text{ is good}} \left(\frac{|W'|^{|W'|}}{e^{|W'|} |W'|!} \varphi(W')\right) + 3\eps / 4.
    \end{equation}
    Recall that every good subset has size at least $a$, and that $M^M/(e^M M!)$ is a decreasing function of $M \in \NN$.
    Now we can complete the proof of the first part of the theorem. Indeed, using the crude bound $\varphi(W') \le 1$, we see that the maximum in the right hand side of \cref{eq:apply-antichain-expectation} is at most $a^a/(e^a a!)$, and hence
    \[
    \prob{e(G[X]) = \ell} \le \frac{a^a}{e^a a!} + 3\eps/4 \le \frac{a^a}{e^a a!} + \eps.
    \]
    To prove the second part, we need a non-trivial bound on $\varphi(W')$ for good subsets of size exactly $a$. If $W'$ is a good subset of size $a$, then 
    \begin{equation} \label{eq:sumd}
    \sum_{w \in W'} d(w) \ge R(\ell - d/t) \ge R(a(k-a) - C - d/t) \ge an - O_{R, a, C}(n/t).
    \end{equation}
    Therefore, each vertex of $W'$ is adjacent to all but at most $O_{R, a, C}(n/t)$ other vertices of $G$.
    Conditioning on the event $W \cap X = W'$, we interpret $e(G[X])$ as a polynomial of a random variable $\vec{\sigma} \sim \Slice(n - |W|, k - |W'|)$:   
    \[
    e(G[X]) = e(G[W']) + \sum_{\substack{u \in W', v \in V \setminus W \\ uv \in E(G)}} \sigma_{v} + \sum_{uv \in E(G[V \setminus W])} \sigma_{u} \sigma_{v}.
    \]
    As discussed above, the sum in the second term is taken over \emph{almost all} pairs of vertices $u \in W'$ and $v \in V \setminus W$. Thus, we assume that ``by default'' each vertex $u \in W'$ is adjacent to all $k-a$ vertices of $X \cap (V \setminus W)$, and then subtract the corresponding sum over the non-edges. Namely, we can write
    \[
    e(G[X]) = e(G[W']) + a(k-a) + f(\vec{\sigma}),
    \]
    where 
    \[
    f((x_v)_{v \in V \setminus W}) =  - \sum_{\substack{u \in W', v \in V \setminus W \\ uv \notin E(G)}} x_{v} + \sum_{uv \in E(G[V \setminus W])} x_{u} x_{v}.
    \]
    Note that, by \cref{eq:sumd}, the first term involves only $O_{R, a, C}(n/t)$ variables. \cref{large-degrees} implies that $e(G[V \setminus W]) \le n/(2t)$, thus the second term also involves at most $n/t$ variables. Therefore, by \cref{product-slice}, $f$ is essentially a polynomial of Bernoulli random variables. Specifically, let 
    \[
    \ell^* = \ell - e(G[W']) - a(k-a), \qquad p^* = (k-|W'|)/(n - |W|) \lesssim 1/R.
    \]
    Then, \cref{product-slice} implies that
    \begin{equation} \label{eq:slice-to-product}
    \varphi(W') = \prob{e(G[X]) = \ell \mid W \cap X = W'} = \prob{f(\vec{\sigma}) = \ell^*} \le \prob{f(\vec{\xi}(p^*)) = \ell^*} + O_{R, a, C}(\max(1/t, 1/k)). 
    \end{equation}
    Recall that $|W'| = a$, and that, by the assumption in the second part of the theorem, $\ell \notin [a(k-a), a(k-a) + \binom{a}{2}]$. This implies that $\ell^* \neq 0$, and thus the polynomial $f - \ell^*$ satisfies the assumptions of \cref{better-than-3/4}. So, 
    \[
    \prob{f(\vec{\xi}(p^*)) = \ell^*} < 0.725.
    \]
    Since $(k, t) \gg (R, a, C, 1/\eps)$, from \cref{eq:slice-to-product} we can further conclude that
    \[
    \varphi(W') < 0.725 + \eps/4.
    \]
    Substituting this into \cref{eq:apply-antichain-expectation}, we obtain the bound
    \[
    \prob{e(G[X]) = \ell} \le \max\left(0.725 \cdot \frac{a^a}{e^a a!}, \frac{(a+1)^{a+1}}{e^{a+1} (a+1)!}\right) + \eps.
    \]
    A simple calculation shows that the second term in the maximum always dominates, completing the proof.
\end{proof}

\section{Proof of \cref{better-than-3/4}} 
\label{sec:proof-better-than-3/4}

In this section we present the proof of \cref{better-than-3/4} (used in the proof of \cref{close-to-multiple}), which states that $\prob{f(\vec{\xi}(p)) = 0} < 0.725$ for each sufficiently small $p > 0$ and each polynomial $f$ satisfying the following assumption:

$(\star)$ $f$ is a quadratic polynomial with a nonzero constant term whose degree-$2$ coefficients lie in $\{0, 1\}$.

We introduce some notation for the maximum point concentration probability of the binomial distribution: for each $m \in \NN$ and $p \in [0, 1]$, we define
\begin{equation} \label{eq:binmax-definition}
\binmax{m}{p} = \max_{0 \le m_0 \le m} \prob{\Bin(m, p) = m_0}, \qquad 
\binmaxplus{m}{p} = \max_{1 \le m_0 \le m} \prob{\Bin(m, p) = m_0}.
\end{equation}
It is a standard fact that for $p < 1$ the maximum in the definition of $\binmax{m}{p}$ is attained at $\lfloor(m+1)p\rfloor$ (i.e., $\lfloor(m+1)p\rfloor$ is a mode of $\Bin(m,p)$), and thus $\binmaxplus{m}{p} = \binmax{m}{p}$ for every $p \in [\frac{1}{m+1}, 1)$. Also note that, for each $m_0$, we have 
\[
\prob{\Bin(m, p) = m_0} = p \cdot \prob{\Bin(m-1, p) = m_0-1} + (1-p) \cdot \prob{\Bin(m-1, p) = m_0} \le \binmax{m-1}{p},
\]
and hence $\binmax{m}{p}$ is a non-increasing function of $m$.

\begin{proof}[\textbf{Proof of \cref{better-than-3/4}}]
First, we observe that it suffices to prove the statement of the proposition for one specific value of $p > 0$. Indeed, consider $p^* < p$ and a polynomial $g^*$ satisfying assumption $(\star)$. Define a (random) polynomial $g$ obtained from $g^*$ in the following way: for each variable $x_i$ of $g^*$ independently, we substitute $x_i = 0$ with probability $1-(p^*/p)$. Note that $g$ also satisfies assumption $(\star)$, and that the random variables $g(\vec{\xi}(p))$ and $g^*(\vec{\xi}(p^*))$ have the same distribution. Therefore, applying the statement of the proposition for $p$ and $g$, we obtain the same statement for $p^*$ and $g^*$:
\[
\prob{g^*(\vec{\xi}(p^*)) = 0} = \mathbb E_g\Big[{\probs{\vec{\xi}(p)}{g(\vec{\xi}(p)) = 0\,|\,g}}\Big] < 0.725.
\]

So, from now on, we fix $p = 0.388$. Consider a polynomial $f$ in the variables $x_1, \ldots, x_s$ satisfying assumption $(\star)$, with a constant term equal to some $\ell \neq 0$. Since any Bernoulli random variable $\xi$ satisfies $\xi^2 = \xi$, we can also assume that $f$ is multilinear.

Suppose that some linear monomial $x_i$ appears in $f$ with a coefficient not equal to $-\ell$. In this case, the polynomials $f^{(0)}_i$ and $f^{(1)}_i$ obtained from $f$ by the substitutions $x_i = 0$ and $x_i = 1$, respectively, also satisfy assumption $(\star)$. Applying the induction hypothesis (on the number of variables) to each of them, we conclude that
\[
\prob{f(\vec{\xi}(p)) = 0} = (1-p) \cdot \prob{f^{(0)}_i(\vec{\xi}(p)) = 0} + p \cdot \prob{f^{(1)}_i(\vec{\xi}(p)) = 0} < ((1-p) + p) \cdot 0.725 = 0.725.
\]
Therefore, we may assume that every linear monomial of $f$ has a coefficient equal to $-\ell$. 

Define a graph $G$ on the vertex set $[s]$ with an edge $ij$ for each monomial $x_i x_j$ appearing in $f$ with coefficient $1$. We consider several cases depending on the structure of $G$: namely, we deal with the case when $G$ is not a complete multipartite graph using \cref{not-complete-multipartite}, with the case when $G$ is complete multipartite but not complete using \cref{complete-multipartite}, and with the case when $G$ is complete using \cref{two-layers}.

\begin{claim} \label{not-complete-multipartite}
Suppose that there exist vertices $u, w_1, w_2$ such that $w_1 w_2$ is an edge in $G$ while $uw_1$ and $uw_2$ are non-edges in $G$. Then, 
\[
\prob{f(\vec{\xi}(p)) = 0} \le \left(1-\binmaxplus{\deg_G(u)}{p}\right) \cdot (1-p) + \binmaxplus{\deg_G(u)}{p} \cdot (1-p^2).
\]
\end{claim}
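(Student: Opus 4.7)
The plan is to condition on $T = \sum_{v \in N_G(u)} \xi_v \sim \Bin(\deg_G(u), p)$ and separately analyze the cases $T = \ell$ and $T \ne \ell$. Introduce $Y = f(\vec{\xi})|_{\xi_u = 0}$. From the earlier reduction that every linear monomial of $f$ has coefficient $-\ell$, a direct algebraic computation shows $f|_{\xi_u = 1} = Y + T - \ell$. Since $\xi_u$ is independent of everything else, this yields
\[
\prob{f = 0 \mid T = t} = (1-p)\,\prob{Y = 0 \mid T = t} + p\,\prob{Y = \ell - t \mid T = t}.
\]
For $t \ne \ell$ the events $\{Y = 0\}$ and $\{Y = \ell - t\}$ are disjoint, so their conditional probabilities sum to at most $1$; since $p = 0.388 \le 1/2$, the combination $(1-p)a + pb$ over $a, b \ge 0$ with $a + b \le 1$ is bounded by $1 - p$. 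Therefore $\prob{f = 0 \mid T = t} \le 1 - p$ for every $t \ne \ell$.

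The main obstacle is the case $t = \ell$. Here $f|_{\xi_u = 0} = f|_{\xi_u = 1} = Y$, so $\prob{f = 0 \mid T = \ell} = \prob{Y = 0 \mid T = \ell}$, and I will aim for the sharper bound $1 - p^2$. This is where the hypothesis that $w_1 w_2 \in E(G)$ while $u w_1, u w_2 \notin E(G)$ becomes essential: it places $w_1, w_2$ outside $N_G(u) \cup \{u\}$. Condition on the tuple $\vec{\eta}$ of all variables other than $\xi_{w_1}, \xi_{w_2}$, and note that $T$ is a function of $\vec{\eta}$ (since $w_1, w_2 \notin N_G(u)$). Viewing $Y$ as a function of $(\xi_{w_1}, \xi_{w_2}) \in \{0,1\}^2$ with $\vec{\eta}$ fixed, the general identity $P(0,0) + P(1,1) - P(1,0) - P(0,1) = D$ for a multilinear polynomial $P(x_1, x_2) = A + B x_1 + C x_2 + D x_1 x_2$ gives
\[
Y(0,0) + Y(1,1) - Y(1,0) - Y(0,1) = 1,
\]
since the coefficient of $\xi_{w_1}\xi_{w_2}$ in $Y$ is $1$ (from the edge $w_1 w_2 \in E(G)$). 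Thus not all four values of $Y$ can vanish, and since $p \le 1/2$ each pair $(a,b) \in \{0,1\}^2$ has probability at least $p^2$; hence $\prob{Y = 0 \mid \vec{\eta}} \le 1 - p^2$ for every $\vec{\eta}$. Averaging over $\vec{\eta}$ compatible with $T = \ell$ gives $\prob{Y = 0 \mid T = \ell} \le 1 - p^2$.

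Writing $B = \binmaxplus{\deg_G(u)}{p}$ and combining by total probability,
\[
\prob{f = 0} \le \prob{T = \ell}(1 - p^2) + \prob{T \ne \ell}(1 - p) = (1 - p) + \prob{T = \ell} \cdot p(1 - p).
\]
Finally, either $\ell$ is a positive integer in $\{1, \ldots, \deg_G(u)\}$, in which case $\prob{T = \ell} \le B$ by the definition of $\binmaxplus{}{}$ (with the supremum taken over $m_0 \ge 1$), or else $\prob{T = \ell} = 0$. In both cases $\prob{T = \ell} \le B$, producing $\prob{f = 0} \le (1-p) + Bp(1-p) = (1-B)(1-p) + B(1-p^2)$ as required.
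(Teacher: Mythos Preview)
Your proof is correct and follows essentially the same approach as the paper's: both condition on the value of $L_u = \sum_{v \in N_G(u)} \xi_v$, use the single variable $\xi_u$ to get the $1-p$ bound when $L_u \neq \ell$, and then condition on everything but $\xi_{w_1}, \xi_{w_2}$ to get the $1-p^2$ bound when $L_u = \ell$. The only cosmetic differences are that you condition on the sum $T$ rather than on all of $\vec\xi_{\mathrm{in}}$, and that you verify the $1-p^2$ bound via the explicit second-difference identity while the paper phrases it as an application of the Combinatorial Nullstellensatz.
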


\begin{claimproof}
    Denote the linear polynomial $\sum_{v \in N_G(u)} x_v$ by $L_{u}$. Then, $f$ can be written as
    \[
    f = \ell + x_u \cdot \left(- \ell + L_u \right) + g
    \]
    for some polynomial $g$ not involving $x_u$.
    Let $\xi_1, \ldots, \xi_s$ be independent $\Ber(p)$ random variables, and write
    \[
    \vec{\xi}_{\mathrm{in}} = (\xi_v)_{v \in N_G(u)}, \quad \vec{\xi}_{\mathrm{out}} = (\xi_v)_{v \notin N_G(u)}.
    \]
    Then, conditioning on the outcome of $\vec{\xi}_{\mathrm{in}}$, we have  
    \begin{equation} \label{eq:conditioning}
    \prob{f(\vec{\xi}_{\mathrm{in}}, \vec{\xi}_{\mathrm{out}}) = 0} = \mathbb E_{\vec{\xi}_{\mathrm{in}}}\Big[{\probs{\vec{\xi}_{\mathrm{out}}}{\ell + \xi_u(-\ell + L_u(\vec{\xi}_{\mathrm{in}})) + g_{\vec{\xi}_{\mathrm{in}}}(\vec{\xi}_{\mathrm{out}}) = 0 \;\big|\; \vec{\xi}_{\mathrm{in}}}}\Big],
    \end{equation}
    where $g_{\vec{\xi}_{\mathrm{in}}}$ is obtained from $g$ by the substitutions $x_v = \xi_v$ for each $v \in N_G(u)$.

    First, we estimate the inner conditional probability in the case when $L_u(\vec{\xi}_{\mathrm{in}}) \neq \ell$. Recalling that the polynomial $g$ does not depend on $x_u$, we can further condition on the outcomes of all the remaining variables except $\xi_u$ to conclude that, for $\vec{\xi}_{\mathrm{in}}$ satisfying $L_u(\vec{\xi}_{\mathrm{in}}) \neq \ell$,
    \begin{equation} \label{eq:ell'-neq-ell}
    \probs{\vec{\xi}_{\mathrm{out}}}{\ell + \xi_u(-\ell + L_u(\vec{\xi}_{\mathrm{in}})) + g_{\vec{\xi}_{\mathrm{in}}}(\vec{\xi}_{\mathrm{out}}) = 0\;\big|\; \vec{\xi}_{\mathrm{in}}} \le \sup_{c \in \RR} \prob{\xi_u = c} \le 1 - p.
    \end{equation}
    Since $\ell \neq 0$, we can bound the probability that $L_u(\vec{\xi}_{\mathrm{in}}) = \ell$ as
    \begin{equation} \label{eq:use-mlp+}
    \probs{\vec{\xi}_{\mathrm{in}}}{L_u(\vec{\xi}_{\mathrm{in}}) = \ell} \le \binmaxplus{\deg_G(u)}{p}.
    \end{equation}
    To bound the inner conditional probability in \cref{eq:conditioning} in the case when $L_u(\vec{\xi}_{\mathrm{in}}) = \ell$, we recall that $g$ contains a quadratic term $x_{w_1} x_{w_2}$ with $w_1, w_2 \notin N_G(u)$. Therefore, conditioning on the outcomes of all the remaining variables except $\xi_{w_1}$ and $\xi_{w_2}$, we conclude that, for $\vec{\xi}_{\mathrm{in}}$ satisfying $L_u(\vec{\xi}_{\mathrm{in}}) = \ell$,
    \begin{equation}\begin{aligned} \label{eq:ell'-eq-ell}
    \probs{\vec{\xi}_{\mathrm{out}}}{\ell + \xi_u(-\ell + L_u(\vec{\xi}_{\mathrm{in}})) + g_{\vec{\xi}_{\mathrm{in}}}(\vec{\xi}_{\mathrm{out}}) = 0\;\big|\; \vec{\xi}_{\mathrm{in}}} &= \probs{\vec{\xi}_{\mathrm{out}}}{\ell + g_{\vec{\xi}_{\mathrm{in}}}(\vec{\xi}_{\mathrm{out}}) = 0\;\big|\; \vec{\xi}_{\mathrm{in}}} \\
    &\le \sup_{c_0, c_1, c_2 \in \RR} \prob{\xi_{w_1} \xi_{w_2} + c_1 \xi_{w_1} + c_2 \xi_{w_2} + c_0 = 0}.
    \end{aligned}\end{equation}
    By the Combinatorial Nullstellensatz (or a direct verification), the 2-variable polynomial $\xi_{w_1} \xi_{w_2} + c_1 \xi_{w_1} + c_2 \xi_{w_2} + c_0$ cannot be identically zero on $\{0, 1\}^2$, and hence the above supremum of probabilities is at most $1 - p^2$.

    Combining \cref{eq:ell'-neq-ell,eq:use-mlp+,eq:ell'-eq-ell}, we can bound the expression from \cref{eq:conditioning} as
    \begin{align*}
    \prob{f(\vec{\xi}(p)) = 0} &\le \prob{L_u(\vec{\xi}_{\mathrm{in}}) \neq \ell} \cdot (1-p) + \prob{L_u(\vec{\xi}_{\mathrm{in}}) = \ell} \cdot (1-p^2) \\
    &\le \left(1-\binmaxplus{\deg_G(u)}{p}\right) \cdot (1-p) + \binmaxplus{\deg_G(u)}{p} \cdot (1-p^2). \qedhere
    \end{align*}
\end{claimproof}
 
By the discussion after \cref{eq:binmax-definition}, for each $m \ge 2$ we have
\begin{equation} \label{eq:0.475}
\binmaxplus{m}{p} = \binmax{m}{p} \le \binmax{2}{p} = 2p(1-p) < 0.475.
\end{equation}
Since $\binmaxplus{0}{p} = 0$ and $\binmaxplus{1}{p} = p = 0.388$, we conclude that $\binmaxplus{m}{p} < 0.475$ for every $m \ge 0$.

Note that the vertices $u, w_1, w_2$ required in the statement of \cref{not-complete-multipartite} exist if and only if the complement of $G$ is not a union of cliques (equivalently, if $G$ is not a complete multipartite graph).
So, in this case \cref{not-complete-multipartite} yields the desired bound
\[
\prob{f(\vec{\xi}(p)) = 0} \le (1 - 0.475) \cdot (1 - 0.388) + 0.475 \cdot (1 - 0.388^2) < 0.725.
\] 
Therefore, we may assume that $G$ \emph{is} a complete multipartite graph.

\begin{claim} \label{complete-multipartite}
Suppose that $I$ is an independent set of $G$ such that each vertex of $I$ is adjacent to each vertex of $[s] \setminus I$. Then,
\[
\prob{f(\vec{\xi}(p)) = 0} \le 1 - \Big(1 - \binmax{|I|}{p}\Big) \Big(1 - \binmaxplus{s-|I|}{p}\Big).
\]
\end{claim}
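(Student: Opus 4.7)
My plan is to exploit the complete bipartite structure between $I$ and $J := [s] \setminus I$ by conditioning on the variables indexed by $J$ and then observing that the equation $f = 0$ almost always pins down $\sum_{v \in I}\xi_v$ to a single value.

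Recall that, after the reduction preceding \cref{not-complete-multipartite}, every linear coefficient of $f$ equals $-\ell$. Since every pair with one endpoint in $I$ and one in $J$ is an edge of $G$ while $I$ itself is independent in $G$, writing $S_I = \sum_{v \in I} x_v$, $S_J = \sum_{v \in J} x_v$, and $g = \sum_{ij \in E(G[J])} x_i x_j$ for the contribution of the $J$-internal edges, $f$ can be rewritten as
\begin{equation*}
f \;=\; \ell - \ell S_I - \ell S_J + S_I S_J + g \;=\; \ell + S_I (S_J - \ell) - \ell S_J + g.
\end{equation*}
I would then condition on the outcome $\vec\xi_J = (\xi_v)_{v \in J}$ and split on whether $S_J(\vec\xi_J) = \ell$ or not.

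In the first case, the coefficient of $S_I$ in the factored form vanishes, so $\{f = 0\}$ becomes $\vec\xi_J$-measurable; its contribution to $\prob{f = 0}$ is thus at most $b_0 := \prob{S_J = \ell}$. In the second case, the equation $f = 0$ determines the required value of $S_I(\vec\xi_I)$ uniquely (it must equal $(\ell S_J - \ell - g(\vec\xi_J))/(S_J - \ell)$); since $\vec\xi_I$ is independent of $\vec\xi_J$ and $S_I(\vec\xi_I) \sim \Bin(|I|, p)$, the conditional probability that it takes this value is at most $\binmax{|I|}{p}$. Writing $a = \binmax{|I|}{p}$ and summing,
\begin{equation*}
\prob{f(\vec\xi(p)) = 0} \;\le\; b_0 + (1 - b_0)\, a \;=\; a + b_0 (1 - a).
\end{equation*}

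This bound is non-decreasing in $b_0$, so it remains only to show $b_0 \le \binmaxplus{s - |I|}{p}$; this is where the assumption $\ell \neq 0$ becomes essential. Indeed, $b_0 = 0$ unless $\ell$ is a positive integer in $\{1, \ldots, |J|\}$, in which case $b_0 = \prob{\Bin(|J|, p) = \ell}$ is dominated (by virtue of $\ell \ge 1$) by the ``$+$'' maximum $\binmaxplus{|J|}{p} = \binmaxplus{s - |I|}{p}$. Substituting gives $1 - (1 - a)(1 - \binmaxplus{s - |I|}{p})$, as required. There is no real obstacle beyond clean bookkeeping; the substantive observation is that the complete bipartite structure between $I$ and $J$ renders $f$ affine in $S_I$ with slope $S_J - \ell$, and this slope is nonzero except on a low-probability event.
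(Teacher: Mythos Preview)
Your proposal is correct and follows essentially the same approach as the paper: condition on $\vec\xi_J$, use the affine dependence on $S_I$ with slope $S_J-\ell$ to bound the conditional probability by $\binmax{|I|}{p}$ whenever $S_J\neq\ell$, and bound $\prob{S_J=\ell}$ by $\binmaxplus{|J|}{p}$ via $\ell\neq 0$. The only cosmetic difference is that the paper absorbs $-\ell S_J$ into its polynomial $g$, whereas you display it separately.
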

\begin{claimproof}
The proof is similar to that of \cref{not-complete-multipartite} (and is, in fact, simpler). Denote $J = [s] \setminus I$, and let $L_I = \sum_{v \in I} x_v$, $L_J = \sum_{v \in J} x_v$. Then, we can write $f$ as
\[
f = \ell + L_I(-\ell + L_J) + g,
\] 
for some polynomial $g$ depending only on the variables $x_v$ with $v \in J$. Let $\xi_1, \ldots, \xi_s$ be independent $\Ber(p)$ random variables, and write $\vec{\xi}_I = (\xi_v)_{v \in I}$, $\vec{\xi}_J = (\xi_v)_{v \in J}$. Then, conditioning on the outcome of $\vec{\xi}_J$, we have
\begin{equation} \label{eq:conditioning-2}
\prob{f(\vec{\xi}_I, \vec{\xi}_J) = 0} = \mathbb E_{\vec{\xi}_J}\Big[{\probs{\vec{\xi}_I}{\ell + L_I(\vec{\xi}_I) \cdot (-\ell + L_J(\vec{\xi}_J)) + g(\vec{\xi}_J) = 0 \;|\; \vec{\xi}_J}}\Big].
\end{equation}
Note that if $L_J(\vec{\xi}_J) \neq \ell$, then the inner conditional probability is always at most
\[
\sup_{c \in \RR} \probs{\vec{\xi}_I}{L_I(\vec{\xi}_I) = c\;|\; \vec{\xi}_J} = \binmax{|I|}{p}. 
\]
On the other hand, since $\ell \neq 0$, 
\[
\probs{\vec{\xi}_J}{L_J(\vec{\xi}_J) = \ell} \le \binmaxplus{|J|}{p}.
\]
Therefore, we can bound the expression from \cref{eq:conditioning-2} as
\begin{align*}
\prob{f(\vec{\xi}(p)) = 0} &\le \prob{L_J(\vec{\xi}_J) \neq \ell} \cdot \binmax{|I|}{p} + \prob{L_J(\vec{\xi}_J) = \ell} \\
&\le \big(1 - \binmaxplus{|J|}{p}\big) \cdot \binmax{|I|}{p} + \binmaxplus{|J|}{p}.
\end{align*}
This gives the desired bound.
\end{claimproof}

Suppose that $G$ is a complete multipartite graph with a part $I$ of size at least $2$. Recalling  \cref{eq:0.475}, for every $m \ge 0$ we have $\binmaxplus{m}{p} < 0.475$, and for every $m \ge 2$ we have $\binmax{m}{p} < 0.475$. Therefore, \cref{complete-multipartite} implies that in this case
\[
\prob{f(\vec{\xi}(p)) = 0} \le 1 - (1 - 0.475)(1 - 0.475) < 0.725.
\]
So, it remains to consider the case when $G$ is a complete graph. In this case, for independent $\Ber(p)$ random variables $\xi_1, \ldots, \xi_s$, we have
\[
f(\xi_1, \ldots, \xi_s) = \ell - \ell \cdot (\xi_1 + \ldots + \xi_s) + \sum_{1 \le i < j \le s} \xi_i \xi_j = \frac{1}{2}(\xi_1 + \ldots + \xi_s - 2\ell)(\xi_1 + \ldots + \xi_s - 1).
\]
Therefore, $f(\xi_1, \ldots, \xi_s) = 0$ if and only if $\xi_1 + \ldots + \xi_s \in \{1, 2\ell\}$, and it suffices to verify the following claim.
\begin{claim} \label{two-layers}
For any nonzero real numbers $\ell_1, \ell_2$, we have $\prob{\xi_1 + \ldots + \xi_s \in \{\ell_1, \ell_2\}} < 0.713$.
\end{claim}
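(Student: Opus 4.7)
The plan is to reduce to a short finite computation. Let $S = \xi_1 + \dots + \xi_s \sim \Bin(s,p)$; since $S$ is a nonnegative integer, $\prob{S = \ell_i}$ vanishes unless $\ell_i$ is a positive integer in $\{1,\dots,s\}$ (here we use $\ell_i \neq 0$). Discarding any $\ell_i$ for which the point probability vanishes, it suffices to prove
\[
\prob{S = m_1} + \prob{S = m_2} < 0.713
\]
for all distinct $m_1, m_2 \in \{1, \dots, s\}$ and all $s \ge 2$; the degenerate situations where at most one of the two points contributes are already covered by $\binmaxplus{s}{p} < 0.475$ from \cref{eq:0.475}.

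For the bulk of the parameter space, I would combine the monotonicity of $\binmax{s}{p}$ in $s$ (recorded after \cref{eq:binmax-definition}) with the explicit computation
\[
\binmax{4}{p} = 4p(1-p)^3 = 4 \cdot 0.388 \cdot 0.612^3 < 0.356,
\]
which yields $\prob{S = m_1} + \prob{S = m_2} \le 2\binmax{s}{p} \le 2\binmax{4}{p} < 0.712$ for every $s \ge 4$. The case $s = 2$ is handled by direct inspection: the only distinct pair is $\{1,2\}$, contributing $2p(1-p) + p^2 < 0.626$.

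The genuinely tight case — and the one that dictates the specific numerical value $p = 0.388$ fixed at the start of the proof of \cref{better-than-3/4} — is $s = 3$. Among the possible pairs $\{1,2\}, \{1,3\}, \{2,3\}$, the maximum is at $\{1,2\}$ since $\prob{S = 3} = p^3$ is smaller than both $\prob{S = 1}$ and $\prob{S = 2}$, and a direct computation telescopes:
\[
\prob{S = 1} + \prob{S = 2} = 3p(1-p)^2 + 3p^2(1-p) = 3p(1-p) = 3 \cdot 0.388 \cdot 0.612 = 0.712368\ldots,
\]
which slips below $0.713$ only by a narrow margin. Verifying this tight inequality is the main obstacle; the rest of the argument is routine.
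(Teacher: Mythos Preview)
Your proposal is correct and follows essentially the same approach as the paper: handle $s\ge 4$ via $2\,\binmax{4}{p}<0.713$ using monotonicity, and check $s\le 3$ directly. You simply write out the small-case computations in more detail than the paper does (which just says ``the cases $s=1,2,3$ can be checked directly''), and your observation that $s=3$ with $\{1,2\}$ is the tight case, yielding $3p(1-p)\approx 0.7124$, is a nice explanation of where the constant $0.713$ comes from.
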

\begin{claimproof}
The proof is a routine calculation. Namely, the cases $s = 1, 2, 3$ can be checked directly, and for $s \ge 4$ we have
\[
\prob{\xi_1 + \ldots + \xi_s \in \{\ell_1, \ell_2\}} \le 2 \cdot \binmax{s}{p} \le 2 \cdot \binmax{4}{p} < 0.713. \qedhere
\]
\end{claimproof}
\end{proof}

\section{$\ell$ is close to zero}
\label{sec:close-to-zero}

In this section, we complete the proofs of \cref{better-than-1/e,2/e^2} by dealing with the case when $\ell = O(1)$. First, in \cref{reduction-to-bernoulli}, we reduce our problem to a question about polynomials of Bernoulli random variables. Then, in \cref{reduction-to-finite}, we further reduce this question to a finite verification.

It turns out that when $\ell = O(1)$ there is essentially no difference (for our edge-inducibility problem) between considering a uniformly random $k$-subset of vertices $X$ and just including each vertex independently with probability $k/n$. Indeed, intuitively, if the host graph $G$ has very few edges, then most of its vertices are isolated and we can use \cref{product-slice} to pass from the slice to independent Bernoulli random variables. On the other hand, if $G$ has at least a moderate number of edges, then the probability that $e(G[X]) = \ell$ is automatically small. The following lemma makes this intuition precise.

\begin{lemma} \label{reduction-to-bernoulli}
    Fix any $p \in (0, 1/2]$. Then, for each $\ell$ such that $1 \le \ell \le k$,  
    \[
    \ind(k, \ell) \le \sup_{f}\prob{f(\vec{\xi}(p)) = \ell} + O_p\left(\frac{\ell \log k + \log^2 k}{k}\right),
    \]
    where the supremum is over all homogeneous multilinear quadratic polynomials $f$ with coefficients in $\{0, 1\}$.
\end{lemma}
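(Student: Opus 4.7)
The plan is as follows. By the monotonicity of $N(n,k,\ell)/\binom{n}{k}$ in $n$, it suffices to prove, for a suitably chosen integer $n \approx k/p$, that for any graph $G$ on $n$ vertices,
\[
\prob{e(G[X])=\ell} \le \sup_f \prob{f(\vec{\xi}(p))=\ell} + O_p\!\left(\tfrac{\ell \log k + \log^2 k}{k}\right),
\]
where $X \sim \Slice(n,k)$. The argument will split on the sparsity of $G$ via the threshold $D := C_p(\ell \log k + \log^2 k)$ for a sufficiently large constant $C_p = C_p(p)$.

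First, in the \emph{sparse} case, when $G$ has at most $D$ non-isolated vertices, the function $e(G[\cdot])$ depends on at most $D$ coordinates of $\mathds{1}_X$. Applying \cref{product-slice} gives $\big|\prob{e(G[X])=\ell} - \prob{e(G[V_{k/n}])=\ell}\big| \le \max(D/n, 3/k) = O_p((\ell \log k + \log^2 k)/k)$, and since $e(G[\cdot]) = \sum_{ij\in E(G)} \xi_i\xi_j$ is a homogeneous multilinear quadratic polynomial with $\{0,1\}$ coefficients, $\prob{e(G[V_{k/n}])=\ell}$ is bounded by the desired supremum (a minor technical point is the small discrepancy between $k/n$ and $p$, which can be absorbed into the error by choosing $n = \lceil k/p\rceil$ and coupling the two Bernoulli distributions).

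Next, in the \emph{dense} case, when $G$ has more than $D$ non-isolated vertices so that $e(G)\ge D/2$, the aim is to show $\prob{e(G[X])=\ell} \le k^{-\Omega_p(1)}$, much smaller than the stated error term. Since $\EE[e(G[X])] \gtrsim_p D \gg \ell$, concentration should yield this; however, the main obstacle is that the natural Lipschitz constants $\deg_G(v)$ in \cref{concentration-on-the-slice} can be as large as $n$, which kills the bound whenever $G$ has a very high-degree vertex.

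To overcome this, the plan is to first peel off high-degree vertices. Set $\Delta_0 := C_p'(\ell + \log k)/p$ with $C_p'=C_p'(p)$ moderately large, and $H := \{v : \deg_G(v) \ge \Delta_0\}$. For each $v \in H$, conditional on $v \in X$ the hypergeometric $|N_G(v)\cap X|$ has mean $\ge \Delta_0 \cdot p/2 \ge \ell$, so Chernoff gives $\prob{|N_G(v)\cap X| \le \ell \mid v \in X} \le k^{-\Omega(C_p')}$, and a union bound over $H$ yields $\prob{X \cap H \ne \emptyset,\; e(G[X])=\ell} \le k^{1-\Omega(C_p')}$, negligible. Conditional on $X \subseteq L := V \setminus H$, the graph $G[L]$ has max degree $<\Delta_0$; either $G[L]$ has at most $D$ non-isolated vertices (so the sparse case applies to $G[L]$), or $e(G[L]) \ge D/2$, in which case \cref{concentration-on-the-slice} with $\sum_v a_v^2 \le 2\Delta_0 e(G[L])$ gives $\prob{e(G[L\cap X]) \le \ell} \le \exp\!\big(-\Omega_p(e(G[L])\,p^4/\Delta_0)\big) \le k^{-\Omega_p(C_p/C_p')}$, negligible once $C_p$ is chosen large enough relative to $C_p'$ (specifically $C_p \gg C_p'/p^5$).
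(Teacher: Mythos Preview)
Your proposal is correct and follows essentially the same approach as the paper: peel off high-degree vertices so that the remaining graph has bounded maximum degree, then apply \cref{product-slice} if that graph is sparse and \cref{concentration-on-the-slice} if it is dense. The paper organizes this slightly more cleanly by peeling first (replacing $G$ with the graph $G'$ obtained by deleting all edges incident to high-degree vertices, and working on the original slice $\Slice(n,k)$ throughout) and then splitting only once on $e(G')$, rather than splitting on $G$, peeling, and splitting again on $G[L]$; this also sidesteps the slightly loose ``conditional on $X\subseteq L$'' phrasing, though your computation of $\prob{e(G[L\cap X])\le \ell}$ makes clear you intend the same thing.
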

\begin{proof}
    Set $R = 1/p$, and assume that $k$ is sufficiently large in terms of $R$. It is sufficient to obtain an upper bound on $N(n, k, \ell)/\binom{n}{k}$ for $n = Rk$. So, we need to prove that for every graph $G$ on $n = Rk$ vertices and for a random $k$-subset $X \subseteq V(G)$, we have
    \begin{equation} \label{eq:version-with-R}
    \prob{e(G[X]) = \ell} \le \sup_{f}\prob{f(\vec{\xi}(p)) = \ell} + O_R\left(\frac{\ell \log k + \log^2 k}{k}\right).
    \end{equation}

    First, we clean up the graph by removing edges adjacent to high-degree vertices. Specifically, define
    \[
    U = \{v \in V(G) : \deg_G(v) \ge 2 R \ell + 2^6 R^2 \log k\},
    \]
    and let $G'$ be the graph obtained from $G$ by deleting all edges adjacent to vertices of $U$. Then, for each $u \in U$, 
    \[
    \mathbb E\Big[|X \cap N_G(u)|\Big] = \frac{\deg_G(u)}{R} \ge \ell + \frac{\deg_G(u)}{2R}.
    \]
    Hence, by our concentration inequality on the slice (\cref{concentration-on-the-slice}),
    \[
    \mathbb P\Big[|X \cap N_G(u)| \le \ell\Big] \le 2 \exp\left(-\frac{(\deg_G(u)/(2R))^2}{8\deg_G(u)}\right) \le 2 \exp(-2 \log k) \le \frac{2}{k^2}.
    \]
    Taking the union bound over $u \in U$, we conclude that with probability at least $1 - 2n/k^2$, for every $u \in U$ we have $|X \cap N_G(u)| > \ell$.
    However, in this case, if $e(G[X]) = \ell$ then $X$ must be disjoint from $U$, and, in particular, we must have $G[X] = G'[X]$. Therefore,
    \[
    \prob{e(G[X]) = \ell} \le \prob{e(G'[X]) = \ell} + 2n/k^2 = \prob{e(G'[X]) = \ell} + 2R/k,
    \]
    and hence it suffices to prove \cref{eq:version-with-R} with $G'$ in place of $G$.
    
    We can interpret $e(G'[X])$ as a homogeneous quadratic polynomial of $\vec{\sigma} \sim \Slice(n, k)$:
    \[
    e(G'[X]) = f_{G'}(\vec{\sigma}), \text{ where } f_{G'}((\sigma_v)_{v \in V(G')}) = \sum_{uv \in E(G')} \sigma_u \sigma_v.
    \]
    \textbf{Case 1:} $e(G') \le 2^8 R^4 \log k \cdot (2R\ell + 2^6 R^2 \log k) = O_R(\ell \log k + \log^2 k)$. Since $f_{G'}$ depends on at most $2e(G')$ variables, by \cref{product-slice} we conclude that
    \[
    \prob{f_{G'}(\vec{\sigma}) = \ell} \le \prob{f_{G'}(\vec{\xi}(p)) = \ell} + \max(2e(G')/n, 3/k) = \prob{f_{G'}(\vec{\xi}(p)) = \ell} + O_R\left(\frac{\ell \log k + \log^2 k}{k}\right),
    \]
    which gives the desired bound.

    \textbf{Case 2:} $e(G') > 2^8 R^4 \log k \cdot (2R\ell + 2^6 R^2 \log k)$. Since we have cleaned up high-degree vertices, each vertex of $G'$ has degree at most $2R\ell + 2^6 R^2 \log k$. Then,
    \[
    \sum_{v \in V(G')} \deg_{G'}(v)^2 \le (2R\ell + 2^6 R^2 \log k) \cdot 2 e(G').
    \]
    The expected number of edges in $G'[X]$ satisfies
    \[
    \expected{e(G'[X])} = \frac{k(k-1)}{n(n-1)} \cdot e(G') \ge \frac{e(G')}{2R^2} \ge \ell + \frac{e(G')}{4R^2}.
    \]
    Applying our concentration inequality on the slice (\cref{concentration-on-the-slice}) with $a_v = \deg_{G'}(v)$, we conclude that
    \begin{align*}
    \prob{e(G'[X]) = \ell} &\le 2\exp\left(-\frac{(e(G')/(4R^2))^2}{8\sum\limits_{v \in V(G')} \deg_{G'}(v)^2}\right)
    \le 2\exp\left(-\frac{e(G')}{2^8 R^4 (2 R \ell + 2^6 R^2 \log k)}\right) \\
    &\le 2\exp(-\log k) \le 2/k.
    \end{align*}
    This bound is certainly smaller than the one required in \cref{eq:version-with-R}, thus we are done.
\end{proof}
Recall from \cref{eq:binmax-definition} that for each $m \in \NN$ and $p \in [0, 1]$, we write
\[
\binmax{m}{p} = \max_{0 \le m_0 \le m}\prob{\Bin(m, p) = m_0} = \max_{0 \le m_0 \le m} \binom{m}{m_0} p^{m_0} (1-p)^{m-m_0}.
\]
The following lemma states that $\binmax{m}{p}$ is also an upper bound on the point concentration probability for a general class of polynomials.
\begin{lemma} \label{large-linear-part}
    Let $f$ be a polynomial with non-negative coefficients, which has at least $m$ nonzero linear terms. Then, for each $\ell \in \RR$,
    \[
    \prob{f(\vec{\xi}(p)) = \ell} \le \binmax{m}{p}.
    \]
\end{lemma}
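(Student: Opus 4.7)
The plan is to condition on most of the variables and exhibit the resulting level set as an antichain on $\{0,1\}^m$, which is then bounded via the BLYM inequality (\cref{BLYM}). Since $\binmax{m}{p}$ is non-increasing in $m$, we may assume without loss of generality that $f$ has exactly $m$ nonzero linear terms; after relabelling variables, say these are $c_1 x_1, \ldots, c_m x_m$ with $c_1, \ldots, c_m > 0$, while the remaining variables are $x_{m+1}, \ldots, x_n$ for some $n \ge m$.

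Fix an arbitrary outcome $\vec y \in \{0, 1\}^{n-m}$ of $(\xi_{m+1}, \ldots, \xi_n)$, and let $g(x_1, \ldots, x_m) = f(x_1, \ldots, x_m, \vec y)$. Since we substitute values in $\{0,1\}$ into a polynomial with non-negative coefficients, $g$ again has non-negative coefficients. Moreover, for each $i \in [m]$, the coefficient of $x_i$ in $g$ equals $c_i$ plus a non-negative contribution from higher-degree terms of $f$ that contain $x_i$, and so is at least $c_i > 0$. Hence, for every $\vec x_{-i} \in \{0,1\}^{m-1}$,
\[
g(\vec x_{-i}, 1) - g(\vec x_{-i}, 0) \ge c_i > 0.
\]
This means $g$ is strictly monotone on $\{0,1\}^m$ in the componentwise partial order, so the level set $\mathcal A = \{x \in \{0, 1\}^m : g(x) = \ell\}$, viewed as a family of subsets of $[m]$ via characteristic vectors, is an antichain.

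By \cref{BLYM}, $\sum_{x \in \mathcal A} \binom{m}{|x|}^{-1} \le 1$, where $|x|$ denotes the Hamming weight. Writing the probability as a sum over $\mathcal A$ and pulling out the worst-case binomial term gives
\[
\prob{g(\xi_1, \ldots, \xi_m) = \ell} = \sum_{x \in \mathcal A} p^{|x|}(1-p)^{m-|x|} \le \binmax{m}{p} \sum_{x \in \mathcal A} \binom{m}{|x|}^{-1} \le \binmax{m}{p}.
\]
Averaging this conditional bound over $\vec y$ then yields the unconditional estimate. The only conceptual step is recognising that the non-negativity of all coefficients, combined with positivity of the linear ones, forces $f$ to be strictly monotone on the Boolean cube; once this is noticed, the point-concentration question becomes an antichain question handled directly by BLYM, and no further estimation is needed.
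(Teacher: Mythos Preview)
Your proof is correct and essentially identical to the paper's: both condition on the variables outside the $m$ linear ones, observe that non-negative coefficients together with positive linear terms force strict monotonicity on $\{0,1\}^m$, and then bound the resulting antichain via BLYM. The only cosmetic difference is that the paper simply picks $m$ of the nonzero linear terms rather than invoking the monotonicity of $\binmax{m}{p}$ to reduce to exactly $m$.
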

\begin{proof}
    Let $f$ be a polynomial in $s$ variables $x_1, \ldots, x_s$, and let $\xi_1, \ldots, \xi_s$ be independent $\Ber(p)$ random variables. Then, we need to prove that
    \[
    \prob{f(\xi_1, \ldots, \xi_s) = \ell} \le \binmax{m}{p}.
    \]  
    Without loss of generality, suppose that monomials $x_1, \ldots, x_m$ appear in $f$ with positive coefficients. Conditioning on the outcomes of $(\xi_i)_{i > m}$, we may assume that $s = m$.

    For a set $A \subseteq [m]$, let $\mathds{1}_A \in \{0, 1\}^m$ denote its characteristic vector. Note that the polynomial $f$ is strictly increasing (on $\{0, 1\}^m$) with respect to each of its $m$ variables. Therefore, the family $\mc{A}$ of subsets of $[m]$ defined as
    \[
    \mc{A} = \{A \subseteq [m] : f(\mathds{1}_A) = \ell \}
    \]
    is an antichain. Using the BLYM inequality (\cref{BLYM}), we conclude that
    \begin{align*}
    \prob{f(\xi_1, \ldots, \xi_m) = \ell} &= \prob{\{i \in [m] : \xi_i = 1\} \in \mc{A}} = \sum_{A \in \mc{A}} p^{|A|}(1-p)^{m-|A|} \\
    &\le \max_{A \in \mc{A}} \binom{m}{|A|} p^{|A|}(1-p)^{m-|A|} \le \binmax{m}{p}.
    \end{align*}
    This completes the proof.
\end{proof}

Now, we explain how to obtain bounds on $\ind(k,\ell)$ (for $\ell=O(1)$) by a finite computation.
\begin{definition}
    Let $\mc{G}_s$ be the set of multilinear quadratic polynomials in $s$ variables with coefficients in $\{0, 1\}$ and no constant term, and let $\mc{G} = \bigsqcup_{s \ge 1} \mc{G}_s$. Next, let $\mc{G}(m) \subseteq \mc{G}$ consist of polynomials $g \in \mc{G}$ satisfying the following condition:
    for every variable $x_i$ of $g$, the polynomial $g_i$ obtained from $g$ by the substitution $x_i = 1$ does not belong to $\mc{G}$ and has fewer than $m$ nonzero linear terms.
\end{definition}

\begin{theorem} \label{reduction-to-finite}
    Consider a polynomial $f \in \mc{G}$. Then, for each $m, \ell \in \NN$ and $p \in [0, 1]$, we have
    \begin{equation} \label{eq:rho-of-Gn}
    \prob{f(\vec{\xi}(p)) = \ell} \le \max\left(\binmax{m}{p}, \max_{g \in \mc{G}(m)}\prob{g(\vec{\xi}(p)) = \ell}\right).
    \end{equation}
\end{theorem}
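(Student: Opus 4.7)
The plan is to prove the theorem by induction on the number of variables $s$ in $f$. The base case $s=0$ is trivial: since $f \in \mc{G}$ has no constant term, $f$ is the zero polynomial, which vacuously belongs to $\mc{G}(m)$, so the right-hand side of \cref{eq:rho-of-Gn} is at least $\prob{0 = \ell}$.

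For the inductive step, fix $f \in \mc{G}$ with $s \ge 1$ variables. If $f \in \mc{G}(m)$, then the bound is immediate. Otherwise, by negating the defining condition of $\mc{G}(m)$, there exists some variable $x_i$ such that the polynomial $g_i$ obtained from $f$ by the substitution $x_i = 1$ either (a) belongs to $\mc{G}$, or (b) has at least $m$ nonzero linear terms. Let $f_0$ denote the polynomial obtained from $f$ by the substitution $x_i = 0$; conditioning on the value of the Bernoulli variable $\xi_i$ gives the decomposition
\[
\prob{f(\vec{\xi}(p)) = \ell} = (1-p) \cdot \prob{f_0(\vec{\xi}(p)) = \ell} + p \cdot \prob{g_i(\vec{\xi}(p)) = \ell}.
\]
A direct inspection shows that $f_0 \in \mc{G}$: substituting $x_i = 0$ kills all monomials involving $x_i$, leaving a multilinear quadratic polynomial in at most $s-1$ variables with coefficients in $\{0,1\}$ and no constant term. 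Hence the induction hypothesis bounds $\prob{f_0(\vec{\xi}(p)) = \ell}$ by the right-hand side of \cref{eq:rho-of-Gn}.

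It remains to treat $\prob{g_i(\vec{\xi}(p)) = \ell}$. In case (a), $g_i \in \mc{G}$ has at most $s-1$ variables, and the induction hypothesis provides the required bound. In case (b), observe that substituting $x_i = 1$ into $f$ produces a polynomial whose linear coefficients are of the form $a_j + b_{ij} \in \{0,1,2\}$ (where $a_j$ and $b_{ij}$ are the original linear and quadratic coefficients of $f$), so in particular $g_i$ has non-negative coefficients and at least $m$ nonzero linear terms; then \cref{large-linear-part} yields $\prob{g_i(\vec{\xi}(p)) = \ell} \le \binmax{m}{p}$, which is again at most the right-hand side of \cref{eq:rho-of-Gn}. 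Since $\prob{f(\vec{\xi}(p)) = \ell}$ is a convex combination of two quantities each bounded by this right-hand side, the induction closes.

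I do not anticipate any serious obstacle: the argument is essentially bookkeeping, with \cref{large-linear-part} handling case (b) and induction handling case (a). The only point worth double-checking is that the substitution $x_i = 1$ preserves non-negativity of the coefficients (which it does, since the new linear coefficients are sums of coefficients of $f$), so that \cref{large-linear-part} is applicable.
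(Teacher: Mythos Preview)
Your proposal is correct and follows essentially the same argument as the paper's proof: induction on the number of variables, splitting on whether $f\in\mc{G}(m)$, conditioning on $\xi_i$ for a suitable variable, and handling the substitution $x_i=1$ either by induction (if it lands back in $\mc{G}$) or by \cref{large-linear-part} (if it has at least $m$ nonzero linear terms). The only cosmetic difference is that you spell out the base case and the non-negativity check for the coefficients of $g_i$ a bit more explicitly.
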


Importantly, $\mc{G}(m)$ is a finite set: as we will show in \cref{Gn-is-small}, every element of $\mc{G}(m)$ is a polynomial in at most $(m+1)^2/4$ variables. Therefore, \cref{reduction-to-finite} provides us with a computational approach to obtaining upper bounds on the concentration probability $\prob{f(\vec{\xi}(p)) = \ell}$ for an arbitrary $f \in \mc{G}$. Indeed, after fixing certain $m \in \NN$, the right hand side of \cref{eq:rho-of-Gn} becomes a finite and relatively explicit expression, which can be then optimised over $p \in [0, 1]$. Taking larger $m$ here should yield better bounds, but, as the size of $\mc{G}(m)$ grows, this increases the amount of computation needed.

\begin{proof}[\textbf{Proof of \cref{reduction-to-finite}}]
    We prove the statement for every $f \in \mc{G}_s$ by induction on $s$. If $f$ lies in $\mc{G}(m)$, then we have nothing to prove. Otherwise, there exists a variable $x_i$ such that the polynomial $f^{(1)}_i$ obtained from $f$ by the substitution $x_i = 1$ either belongs to $\mc{G}$ or has at least $m$ nonzero linear terms.

    Note that the polynomial $f^{(0)}_i$ obtained from $f$ by the substitution $x_i = 0$ always belongs to $\mc{G}_{s-1}$ (indeed, it depends on at most $s-1$ variables, and the set of its nonzero coefficients is a subset of the nonzero coefficients of $f$). Thus, by the induction hypothesis, we have
    \[
    \prob{f^{(0)}_i(\vec{\xi}(p)) = \ell} \le \max\left(\binmax{m}{p}, \max_{g \in \mc{G}(m)}\prob{g(\vec{\xi}(p)) = \ell}\right).
    \]
    Next, we observe that an analogous bound holds for $f^{(1)}_i$ as well. Indeed, if $f^{(1)}_i$ belongs to $\mc{G}_{s-1}$, then we can again use the induction hypothesis. Otherwise, it has at least $m$ nonzero linear terms. While its coefficients do not necessarily lie in $\{0, 1\}$, they are still non-negative. Hence, by \cref{large-linear-part}, we have $\prob{f^{(1)}_i(\vec{\xi}(p)) = \ell} \le \binmax{m}{p}$. In both cases, we can conclude that
    \[
    \prob{f^{(1)}_i(\vec{\xi}(p)) = \ell} \le \max\left(\binmax{m}{p}, \max_{g \in \mc{G}(m)}\prob{g(\vec{\xi}(p)) = \ell}\right).
    \]
    This completes the proof, since
    \[
    \prob{f(\vec{\xi}(p)) = \ell} = p \cdot \prob{f^{(1)}_i(\vec{\xi}(p)) = \ell} + (1-p) \cdot \prob{f^{(0)}_i(\vec{\xi}(p)) = \ell}. \qedhere
    \]
\end{proof}

\begin{lemma} \label{Gn-is-small}
    Every $g \in \mc{G}(m)$ is a polynomial in at most $(m+1)^2/4$ variables (that is, $g \in \mc{G}_{s}$ for some $s \le (m+1)^2/4$). 
\end{lemma}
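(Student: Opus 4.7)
My plan is to translate the two conditions in the definition of $\mc G(m)$ into combinatorial constraints on a pair $(G, B)$ and then bound $s$ by a short counting argument. First I would write $g = \sum_{1 \le j < k \le s} a_{jk} x_j x_k + \sum_{j=1}^s b_j x_j$ with $a_{jk}, b_j \in \{0,1\}$, and associate the graph $G$ on $[s]$ with edge set $\{\{j,k\} : a_{jk} = 1\}$ and the subset $B = \{j : b_j = 1\}$. I may reduce to the case where every variable actually appears in $g$, so that each $i \in [s]$ is in $B$ or has $\deg_G(i) \ge 1$.

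Next I would translate the defining conditions of $\mc G(m)$. For each $i \in [s]$, the polynomial $g_i$ obtained by setting $x_i = 1$ has constant term $b_i$ and, for $j \ne i$, coefficient $b_j + a_{ij} \in \{0,1,2\}$ on $x_j$. Hence $g_i \in \mc G$ iff $i \notin B$ and $N_G(i) \cap B = \emptyset$, and the number of nonzero linear terms of $g_i$ equals $|(B \setminus \{i\}) \cup N_G(i)|$. So $g \in \mc G(m)$ is equivalent to requiring, for every $i \in [s]$: (a) $i \in B$ or $N_G(i) \cap B \ne \emptyset$ (i.e.\ $B$ dominates $[s]$ in $G$); and (b) $|(B \setminus \{i\}) \cup N_G(i)| \le m-1$.

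Finally, the bound on $s$ should fall out by double counting. Condition (a) applied to any variable forces $B \ne \emptyset$, and for any $i \in B$ one has $|(B \setminus \{i\}) \cup N_G(i)| = |B| - 1 + |N_G(i) \setminus B|$ (using $i \notin N_G(i)$), so (b) yields $|B| \le m$ and $|N_G(i) \setminus B| \le m - |B|$ for every $i \in B$. Combining with (a),
\[
s = |B| + |[s] \setminus B| \le |B| + \sum_{i \in B} |N_G(i) \setminus B| \le |B| + |B|(m-|B|) = |B|(m+1-|B|) \le (m+1)^2/4
\]
by AM--GM. The only step that requires care is the translation of the definition --- in particular, correctly accounting for the constant term $b_i$ of $g_i$ and for the possible coefficient $2$ arising from $b_j + a_{ij}$ --- and I do not expect any further obstacle beyond this bookkeeping.
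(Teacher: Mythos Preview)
Your proof is correct and follows essentially the same approach as the paper: your set $B$ is the paper's set $L$ of indices with a nonzero linear term, your domination condition (a) is the paper's observation that every $j\in Q=[s]\setminus L$ has a quadratic term $x_ix_j$ with some $i\in L$, and your bound $|N_G(i)\setminus B|\le m-|B|$ for $i\in B$ is the paper's count of at most $m-|L|$ indices $j\in Q$ with $x_ix_j$ in $g$, leading to the same inequality $s\le |L|(m+1-|L|)\le(m+1)^2/4$. Your translation of the two defining conditions is accurate (and in fact the ``reduction'' to every variable appearing is automatic, since if $x_i$ does not appear then $g_i=g\in\mc G$, violating the definition of $\mc G(m)$).
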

\begin{proof}
    Let $g$ be a polynomial in $s$ variables $x_1, \ldots, x_s$. Recall that, by definition of $\mc{G}(m)$, for each variable $i \in [s]$, the polynomial $g_i$ obtained from $g$ by the substitution $x_i = 1$ 
    \begin{itemize}
        \item does not belong to $\mc{G}$ (that is, it either has a nonzero constant term, or has a coefficient at least $2$), and
        \item has at most $m-1$ nonzero linear terms.
    \end{itemize}
    Let 
    \[
    L = \{i \in [s] : g \text{ has a term } x_i\}, \quad Q = [s] \setminus L.
    \]
    Note that for each $j \in Q$, there exists $i \in L$ such that $g$ contains a term $x_i x_j$: otherwise, we would have $g_j \in \mc{G}$. On the other hand, for each $i \in L$ there are at most $m-|L|$ indices $j \in Q$ such that $g$ contains a term $x_i x_j$: otherwise, $g_i$ would have at least $(m-|L|+1) + (|L|-1) = m$ nonzero linear terms. Together, these two statements imply that $|Q| \le |L| \cdot (m - |L|)$, and thus
    \[
    s = |L| + |Q| \le |L| \cdot (m - |L| + 1) \le \frac{(m+1)^2}{4}. \qedhere
    \]
\end{proof}

Next, we deduce \cref{0.33,0.27}, which will be used to complete the proofs of \cref{better-than-1/e,2/e^2}, respectively.

\begin{proposition} \label{0.33}
    There exists $p \in (0, 1/2]$, such that for every polynomial $f \in \mc{G}$ and every integer $\ell \ge 2$ we have 
    \[
    \prob{f(\vec{\xi}(p)) = \ell} < 0.3293 < 0.33.
    \] 
\end{proposition}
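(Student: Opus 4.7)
The plan is to apply \cref{reduction-to-finite} directly, which reduces the infinite problem (ranging over all $f \in \mc{G}$) to a finite verification. Concretely, it suffices to exhibit a positive integer $m$ and a value $p \in (0, 1/2]$ for which both
\[
\binmax{m}{p} < 0.3293 \qquad \text{and} \qquad \max_{g \in \mc{G}(m),\, \ell \ge 2}\, \prob{g(\vec{\xi}(p)) = \ell} < 0.3293.
\]
For the first condition, the standard estimate $\binmax{m}{p} \lesssim 1/\sqrt{m p(1-p)}$ shows a modest $m$ suffices as long as $p$ is bounded away from $0$ and $1$. For the second, \cref{Gn-is-small} guarantees that every $g \in \mc{G}(m)$ has at most $(m+1)^2/4$ variables, so $\mc{G}(m)$ is a finite set up to relabeling of variables; and for each fixed $g$ in $s$ variables, the random variable $g(\vec{\xi}(p))$ takes values in the finite set $\{0, 1, \ldots, s + \binom{s}{2}\}$, so the check reduces to finitely many $\ell$ per $g$, each probability being computable in closed form by summing over the $2^s$ Bernoulli outcomes.

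The choice of $p$ is the delicate aspect, because there is tension between the two requirements. If $p$ is too small, then $\binmax{m}{p} \to (1-p)^m$ blows up and (i) fails; if $p$ is too large, then already the trivial polynomial $f = x_1 + x_2$ gives $\prob{f(\vec{\xi}(p)) = 2} = p^2$, forcing $p < \sqrt{0.3293} \approx 0.5738$ in (ii). Thus we are restricted to a window of $p$-values; one searches numerically to identify a pair $(m, p)$ in this window for which both inequalities have a chance to hold, and then commits to an exhaustive verification.

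The main obstacle will be implementing the enumeration over $\mc{G}(m)$ efficiently: even for modest $m$, the number of quadratic $\{0,1\}$-polynomials on $\lfloor(m+1)^2/4\rfloor$ variables is astronomical, so one must make essential use of the structural description in \cref{Gn-is-small} (the partition $[s] = L \sqcup Q$ with $|Q| \le |L|(m-|L|)$, each $j \in Q$ adjacent to some $i \in L$, and each $i \in L$ having at most $m - |L|$ neighbours in $Q$) to prune the search tree, and to quotient out the natural $S_s$-symmetry. Modulo this computational engineering, each candidate $g$ yields an exact probability that is compared against $0.3293$; the constant $0.3293$ is chosen with just enough slack to make the finite check go through at a manageable value of $m$. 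The final statement is obtained by fixing any $p$ for which the computer search succeeds.
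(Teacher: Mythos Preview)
Your proposal takes essentially the same approach as the paper: apply \cref{reduction-to-finite} to reduce to a finite maximum over $\mc{G}(m)$, use \cref{Gn-is-small} to bound the number of variables, and carry out a computer verification. The only substantive difference is that the paper actually commits to the concrete parameters $m=5$ and $p=1/3$ and reports the outcome of the computation (the maximum over $\mc{G}(5)$ for $\ell\ge 2$ equals $\binmax{5}{1/3}=0.3292\ldots$, attained at $\ell=2$ by $g=(1+x_1)(x_2+x_3+x_4+x_5)$), whereas your write-up leaves these as ``to be found''; without exhibiting a specific $(m,p)$ and the result of the finite check, the argument is an outline rather than a proof.
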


\begin{proof}
    By \cref{reduction-to-finite}, it is sufficient to find $m \in \NN$ and $p \in (0, 1/2]$ such that
    \[
    \max\left(\binmax{m}{p}, \max_{g \in \mc{G}(m), \ell \ge 2}\prob{g(\vec{\xi}(p)) = \ell}\right) < 0.3293.
    \] 
    We take $m = 5$ and $p = 1/3$. Note that $\mc{G}(5)$ is relatively small: by \cref{Gn-is-small}, it contains only polynomials in at most $9$ variables such that (by definition) each variable appears in at most $4$ of its quadratic terms. For each of these finitely many polynomials, one needs to check only finitely many values of $\ell$. Therefore, calculation of $\max\limits_{g \in \mc{G}(5), \ell \ge 2}\prob{g(\vec{\xi}(p)) = \ell}$ is within reach of a computer program\footnote{Our (quite straightforward) code is available in the ancillary files of the arXiv submission.}. This way, we obtain that
    \[
    \binmax{5}{p} = \max_{g \in \mc{G}(5), \ell \ge 2}\prob{g(\vec{\xi}(p)) = \ell} = 0.3292... < 0.3293.
    \]
    (the maximum is attained at $\ell = 2$ and the polynomial $(1 + x_1)(x_2 + x_3 + x_4 + x_5) \in \mc{G}(5)$). This completes the proof.
\end{proof}

\begin{remark}
\label{computational_data}

The table below depicts, for $m = 2, 3, 4, 5$, the sizes of $\mc{G}(m)$ (after identifying polynomials that can be obtained from each other by a permutation of variables), the optimal value of $p$ one needs to take in the above argument, and the resulting bound on $\sup\limits_{f \in \mc{G}, \ell \ge 2}\prob{f(\vec{\xi}(p)) = \ell}$.

\begin{center}
\begin{tabular}{|l|c|c|c|c|}
\hline
$\boldsymbol{m}$ & \textbf{2} & \textbf{3} & \textbf{4} & \textbf{5} \\
\hline
$|\mc{G}(m) / \sim |$ & 4 & 16 & 99 & 1653 \\
\hline
optimal value of $p$ & 2/3 \footnote{Some results in this section are stated only for $p \le 1/2$, but this assumption is not necessary and can be easily removed.} & 0.5 & 0.4 & 1/3 \\
\hline
bound on $\sup\limits_{f \in \mc{G}, \ell \ge 2}\prob{f(\vec{\xi}(p)) = \ell}$ & 0.444... & 0.375 & 0.3456 & 0.3292... \\
\hline
\end{tabular}
\end{center}

It is plausible that this bound could be improved by considering larger values of $m$, perhaps with a more optimised search algorithm or greater computational resources. However, it is not clear whether this approach can yield the optimal bound of $2/e^2 + o(1)$ for $\ell = 2$ and $p \to 0$.
\end{remark}

\begin{proposition} \label{0.27}
    There exists $p \in (0, 1/2]$ such that for every polynomial $f \in \mc{G}$ and every integer $\ell \ge 60$ we have     
    \[
    \prob{f(\vec{\xi}(p)) = \ell} < 0.27 < 2/e^2.
    \]
\end{proposition}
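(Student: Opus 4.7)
The plan is to follow the same strategy as in \cref{0.33}, applying \cref{reduction-to-finite} with suitable parameters $m$ and $p$, but with a larger $m$ in order to reach the sharper target bound (which is very close to the conjectured optimum $2/e^2$). Specifically, I would take some $m_0 \ge 9$ together with $p_0$ near $1/2$, so that
\[
\binmax{m_0}{p_0} \le \binom{m_0}{\lfloor m_0/2 \rfloor} 2^{-m_0} < 0.27;
\]
for instance, $m_0 = 9$ and $p_0 = 1/2$ yield $\binmax{9}{1/2} = 126/512 < 0.247$. Once $m_0$ and $p_0$ are fixed, \cref{reduction-to-finite} reduces the statement to verifying $\prob{g(\vec\xi(p_0)) = \ell} < 0.27$ for every $g \in \mc{G}(m_0)$ and every integer $\ell \ge 60$.

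This is where the hypothesis $\ell \ge 60$ enters in an essential way. Since polynomials in $\mc{G}$ have non-negative integer coefficients, $g(\vec\xi(p_0))$ takes values in $\{0, 1, \ldots, g(\mathbf{1})\}$, so any $g$ with $g(\mathbf{1}) < 60$ contributes probability $0$ for $\ell \ge 60$ and may be immediately discarded. By \cref{Gn-is-small}, the remaining candidates are polynomials in at most $(m_0+1)^2/4$ variables, leaving a finite explicit list; for each such $g$ and each $\ell \in [60, g(\mathbf{1})]$, the value $\prob{g(\vec\xi(p_0)) = \ell}$ is a polynomial in $p_0$ that can be bounded numerically.

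The main obstacle is computational: the table in \cref{computational_data} already reports $|\mc{G}(5)/\!\sim| = 1653$, and this count grows rapidly with $m_0$, so a naive enumeration for $m_0 = 9$ is likely infeasible. I would address this by combining several techniques. First, aggressive pruning during enumeration, discarding any candidate as soon as one sees that $g(\mathbf{1}) < 60$, and quotienting by the symmetric group action on variables. Second, a structural shortcut: for polynomials whose quadratic graph contains a reasonably large matching, one may invoke the quadratic Littlewood--Offord bound of Kwan--Sauermann (\cref{large-matching-corollary}) to directly obtain $\prob{g(\vec\xi(p_0)) = \ell} \lesssim 1/\sqrt{\nu_2(g)}$, which already falls below $0.27$ once $\nu_2(g)$ exceeds a small absolute constant. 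This would reduce the brute force search to the set of ``structured'' polynomials with small matching number in the quadratic graph, a much smaller class for which direct enumeration (paired with the analogous code used to verify \cref{0.33}) should suffice.
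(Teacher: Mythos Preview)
Your starting point---invoke \cref{reduction-to-finite} and then handle the polynomials in $\mc{G}(m)$---matches the paper's, but you miss the key simplification that turns this into a two-line argument with no computation whatsoever. The hypothesis $\ell \ge 60$ should be exploited not merely to discard polynomials with $g(\mathbf 1) < 60$, but via Markov's inequality applied to a \emph{uniform} bound on $\expected{g(\vec\xi(p))}$ over all $g \in \mc{G}(m)$.

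Concretely, the paper takes $m = 8$ and $p = 0.426$, for which one checks $\binmax{8}{0.426} < 0.27$. Any $g \in \mc{G}(8)$ has at most $\lfloor 81/4\rfloor = 20$ variables by \cref{Gn-is-small}; the definition of $\mc{G}(m)$ further forces at most $8$ linear terms and at most $7$ quadratic terms per variable, hence at most $70$ quadratic terms in total. Therefore
\[
\expected{g(\vec\xi(p))} \le 70\,p^2 + 8\,p < 16.112,
\]
and Markov's inequality gives $\prob{g(\vec\xi(p)) = \ell} \le 16.112/\ell < 0.27$ for every $\ell \ge 60$. No enumeration of $\mc{G}(8)$ is required at all.

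Your specific parameter choice $p_0 = 1/2$ actually blocks this shortcut: with $m_0 = 9$ one would have up to $25$ variables and $100$ quadratic terms, giving $\expected{g(\vec\xi(1/2))} \le 29.5$, and $29.5/60 \approx 0.49$ is far above $0.27$. Taking $p$ somewhat below $1/2$ (and adjusting $m$ accordingly so that $\binmax{m}{p}$ stays under $0.27$) is precisely what makes the Markov bound bite. As written, your proposal defers the entire argument to a computer search that you yourself flag as likely infeasible, and the suggested pruning via \cref{large-matching-corollary} (which is stated for the slice, not for Bernoulli inputs) carries an unspecified implied constant; so the proposal is a plausible plan but not yet a proof.
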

\begin{proof}
    By \cref{reduction-to-finite}, it is sufficient to find $m \in \NN$ and $p \in (0, 1/2]$ such that
    \[
    \max\left(\binmax{m}{p}, \max_{g \in \mc{G}(m), \ell \ge 2}\prob{g(\vec{\xi}(p)) = \ell}\right) < 0.27.
    \] 
    We take $m = 8$ and $p = 0.426$. A simple calculation shows that $\binmax{8}{0.426} < 0.27$.

    Now consider a polynomial $g \in \mc{G}(8)$. By \cref{Gn-is-small}, it is a polynomial in at most $\lfloor(m+1)^2/4\rfloor = 20$ variables. Also, by the definition of $\mc{G}(m)$, it has at most $m = 8$ linear terms, and each of its variables appears in at most $m-1 = 7$ quadratic terms. Therefore, 
    \[
    \expected{g(\vec{\xi}(p))} \le p^2 \cdot \frac{7 \cdot 20}{2} + p \cdot 8 < 16.112.
    \]
    Since $\ell \ge 60$, by Markov's inequality we conclude that
    \[
    \prob{g(\vec{\xi}(p)) = \ell} \le \frac{1}{\ell} \expected{g(\vec{\xi}(p))} < 0.27. 
    \qedhere
    \]
\end{proof}

Finally, we can formally deduce \cref{better-than-1/e,2/e^2} from \cref{bulk,far-from-multiple,close-to-multiple,reduction-to-bernoulli,0.33,0.27}.

\begin{proof}[\textbf{Proofs of \cref{better-than-1/e,2/e^2}}]
    For \cref{2/e^2}, we fix an arbitrary $\eps > 0$; for \cref{better-than-1/e}, we set $\eps = 0.01$. 
    First, we prove that $\ind(k, \ell) \le 2/e^2 + \eps$ when $C < \ell \le \frac{1}{2}\binom{k}{2}$ (for some absolute constant $C$), $\ell \neq k-1$, and $k$ is sufficiently large in terms of $\eps$.
    \begin{itemize}
        \item When $\ell > C_1 k$ (for a certain absolute constant $C_1$) the desired bound follows from \cref{bulk};
        \item When $\ell \le C_1 k$ and does not lie in an interval of the form $[ak- C, ak + C]$ for some positive integer $a \ge 1$ (for a certain absolute constant $C$), the bound follows from \cref{far-from-multiple};
        \item When $\ell$ lies in some interval $[ak - C, ak + C]$ with $1 \le a \le C_1 + 1$, the bound follows from \cref{close-to-multiple} (here we use that $\ell \neq k-1$).
    \end{itemize}
    It remains to deal with the case when $\ell \le C$. By \cref{reduction-to-bernoulli}, for every $p \in (0, 1/2]$ we have
    \[
    \ind(k, \ell) \le \sup_{f \in \mc{G}} \prob{f(\vec{\xi}(p)) = \ell} + O_p\left(\frac{\ell \log k + \log^2 k}{k}\right).
    \]
    Note that for a fixed $p$ and $\ell \le C$, the second term tends to zero as $k \to \infty$.
    \cref{0.33} states that there exists $p \in (0, 1/2]$ such that for every $f \in \mc{G}$ and $\ell \ge 2$, we have $\prob{f(\vec{\xi}(p)) = \ell} < 0.3293 < 0.33$. This implies \cref{better-than-1/e}.
    Similarly, \cref{0.27} states that there exists $p \in (0, 1/2]$ such that for every $f \in \mc{G}$ and $\ell \ge 60$, we have $\prob{f(\vec{\xi}(p)) = \ell} < 0.27 < 2/e^2$. This implies \cref{2/e^2}.
\end{proof}

\section{Hypergraphs}
\label{sec:hypergraphs}

In this section, we study edge-inducibilities in hypergraphs and prove \cref{hypergraphs-weak}. In fact, we deduce it from a structural result about polynomials on the slice (\cref{polynomial-dichotomy} below). Roughly speaking, this result states that if a polynomial on the slice is poorly anticoncentrated, then one can make it constant by fixing values of a small number of its variables.

\begin{definition} \label{def:constantly-ell}
    We say that a multilinear polynomial $f$ in $n$ variables of degree at most $r$ is \emph{constantly $\ell$ on $\Slice(n, k)$} if for every $0 \le d \le r$, all $\binom{n}{d}$ coefficients of its degree-$d$ monomials are equal to some $\ell_d \in \RR$, and
    \[
    \sum_{d = 0}^r \ell_d \binom{k}{d} = \ell.
    \]
    In particular, this implies that $f(\vec{x}) = \ell$ for every $\vec{x} \in \Slice(n, k)$.
\end{definition}

For a polynomial $f$ in $n$ variables and disjoint sets $Y_0, Y_1 \subseteq [n]$, let $f_{Y_0, Y_1}$ denote the polynomial obtained from $f$ by setting all variables in $Y_0$ to zero and all variables in $Y_1$ to one. Note that if $f$ is constantly $\ell$ on $\Slice(n, k)$ (and $|Y_1| \le k$), then $f_{Y_0, Y_1}$ is also constantly $\ell$ on $\Slice(n - |Y_0| - |Y_1|, k - |Y_1|)$.

\begin{theorem} \label{polynomial-dichotomy}
    Consider $r, q, R, n, k \in \NN$ and $K_0, \eps > 0$ such that $3k - K_0 \le n \le Rk + K_0$, and let $f$ be an $n$-variable multilinear polynomial of degree at most $r$ with coefficients in $\ZZ\cap[-q, q]$. Then, for each $\ell \in \RR$, at least one of the following holds:
    \begin{enumerate}
        \item[1.] $\prob{f(\vec{\sigma}) = \ell} \le \eps$, where $\vec{\sigma} \sim \Slice(n, k)$;
        \item[2.] There exist $C' = C'(\eps, r, q, R, K_0)$ and disjoint sets $Y_0, Y_1 \subseteq [n]$ of size at most $C'$, such that $f_{Y_0, Y_1}$ is constantly $\ell$ on $\Slice(n - |Y_0| - |Y_1|, k - |Y_1|)$ in the sense of \cref{def:constantly-ell}.
    \end{enumerate}    
\end{theorem}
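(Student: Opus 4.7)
I would proceed by induction on the degree $r$. In the base case $r = 0$, the polynomial $f$ is a constant in $\ZZ \cap [-q, q]$: if it equals $\ell$ then case~2 holds with $Y_0 = Y_1 = \emptyset$ (so $\ell_0 = \ell$), and otherwise $\prob{f(\vec\sigma) = \ell} = 0 \le \eps$. For the inductive step, assume $\prob{f(\vec\sigma) = \ell} > \eps$ (else case~1 is immediate), fix a threshold $m = m(\eps, r, q, R, K_0)$ to be chosen large, and split on whether $\nu_r(f) \ge m$.

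In the ``small matching'' case $\nu_r(f) < m$, fix a maximum matching of $\nu_r(f)$ disjoint nonzero degree-$r$ monomials $I_1, \dots, I_{\nu_r(f)}$ and set $U = \bigcup_j I_j$; then $|U| \le rm$ and every nonzero degree-$r$ monomial of $f$ meets $U$. Decomposing
\[
\prob{f(\vec\sigma) = \ell} = \sum_{U_1 \subseteq U} \prob{U \cap \vec\sigma = U_1} \cdot \prob{f_{U \setminus U_1, U_1}(\vec\sigma') = \ell},
\]
where $\vec\sigma' \sim \Slice(n - |U|, k - |U_1|)$, pigeonhole produces a $U_1 \subseteq U$ whose conditional probability exceeds $\eps$. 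The polynomial $f_{U \setminus U_1, U_1}$ then has degree at most $r-1$ (every degree-$r$ monomial of $f$ meets $U$ and so becomes either zero or strictly lower-degree), coefficients in $\ZZ \cap [-q\cdot 2^{|U|}, q\cdot 2^{|U|}]$, and parameters $(n', k') = (n - |U|, k - |U_1|)$ satisfying $3k' - K_0' \le n' \le R k' + K_0'$ for some modified $K_0' = K_0 + O(Rrm)$. Applying the induction hypothesis to $f_{U \setminus U_1, U_1}$ yields disjoint sets $Y_0', Y_1' \subseteq [n] \setminus U$ of size bounded in terms of $(\eps, r-1, q\cdot 2^{rm}, R, K_0')$; taking $Y_0 = (U \setminus U_1) \cup Y_0'$ and $Y_1 = U_1 \cup Y_1'$ closes the induction.

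In the ``large matching'' case $\nu_r(f) \ge m$, I would like to apply \cref{large-matching} to obtain $\prob{f(\vec\sigma) = \ell} \lesssim_{r,q,R} \max_{r' \le r} \LO_{r'}(\Omega(m))$, which for $m$ chosen sufficiently large contradicts the assumption $\prob{f(\vec\sigma) = \ell} > \eps$. The obstacle is that \cref{large-matching} as stated assumes both non-negative coefficients in $\{0, \dots, q\}$ and that $f$ has at most $\delta n^r$ nonzero degree-$r$ monomials. Extending to signed coefficients in $\ZZ \cap [-q, q]$ should be a routine sign-tracking modification of the Jain--Kwan--Mubayi--Tran argument. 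The genuinely delicate point, and the main obstacle of the whole proof, is removing the sparsity hypothesis on the number of degree-$r$ monomials; this is precisely what the ``minor variants of \cref{large-matching}'' promised for this section are expected to supply, presumably by combining the Kwan--Sudakov--Tran concentration inequality (\cref{concentration-on-the-slice}) with \cref{large-matching} in the spirit of \cref{large-matching-corollary}, using that when $f$ has $\gtrsim n^r$ nonzero degree-$r$ terms the random variable $f(\vec\sigma)$ has expectation and fluctuation of order $\Theta(k^r)$ and so cannot concentrate on any single value. The strengthened hypothesis $n \ge 3k - K_0$ (rather than $n \ge 2k$) appears to provide exactly the slack needed to apply such a concentration step even after the small-matching reduction has removed up to $O(m)$ coordinates.
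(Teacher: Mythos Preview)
Your inductive framework and your handling of the small-matching case are essentially the same as the paper's, and correct. The genuine gap is in the large-matching case.

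You propose that when $\nu_r(f)\ge m$ one can force $\prob{f(\vec\sigma)=\ell}\le\eps$ by extending \cref{large-matching} (removing the non-negativity and, crucially, the sparsity hypothesis), arguing that in the dense regime ``$f(\vec\sigma)$ has expectation and fluctuation of order $\Theta(k^r)$ and so cannot concentrate on any single value.'' This is false: take $f=\sum_{I\in\binom{[n]}{r}}\vec{x}^{\,I}$. Then $\nu_r(f)=\lfloor n/r\rfloor$ is as large as possible, yet $f(\vec\sigma)=\binom{k}{r}$ identically on the slice, so $\prob{f(\vec\sigma)=\binom{k}{r}}=1$. More generally, any polynomial that is ``constantly $\ell$'' in the sense of \cref{def:constantly-ell} (all degree-$d$ coefficients equal to some $\ell_d$) can have arbitrarily large $\nu_r$ and zero fluctuation. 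These are precisely the polynomials for which conclusion~2 of the theorem is supposed to hold with $Y_0=Y_1=\emptyset$, so no anticoncentration argument can possibly cover them.

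The missing idea, which the paper supplies, is a \emph{subtraction trick} combined with a second anticoncentration input. The paper does not split on $\nu_r(f)$ at all. Instead it first asks whether at least $(1-\delta)\binom{n}{r}$ of the degree-$r$ coefficients share a common value $\ell_r$. If not, a separate ``dense'' anticoncentration result (\cref{dense-polynomials}, based on a Bollob\'as--Scott discrepancy inequality) gives $\prob{f(\vec\sigma)=\ell}\le\eps$ directly. If so, one replaces $f$ by $f^*$ obtained by subtracting $\ell_r$ from every degree-$r$ coefficient; now $f^*$ has at most $\delta\binom{n}{r}$ nonzero top-degree terms, so the signed-coefficient extension of \cref{large-matching} (\cref{sparse-polynomials-general}) applies and forces $\nu_r(f^*)$ to be small. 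Only then does one pass to a small cover $U$ of the top-degree support of $f^*$ and induct. Your small-matching step is exactly this last part, but applied to $f^*$ rather than $f$; without first subtracting off $\ell_r$, the case split on $\nu_r(f)$ is not exhaustive for the dichotomy you are trying to prove.
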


The proof of \cref{polynomial-dichotomy} relies on the following two propositions. \cref{sparse-polynomials-general} extends \cite[Lemma 5.1]{jain-kwan-mubayi-tran-25} (stated in this paper as \cref{large-matching}) to sparse polynomials on the slice with possibly negative coefficients. \cref{dense-polynomials} provides a suitable analogue of it for dense polynomials on the slice, generalising \cite[Lemma 4.1]{jain-kwan-mubayi-tran-25}. 

\begin{proposition} \label{sparse-polynomials-general}
    For $r, q \in \NN$, there exists $\delta = \delta(r, q) > 0$ such that the following holds. Consider $R, k, m, n \in \NN$ such that $2k \le n \le Rk$, and let $f$ be an $n$-variable multilinear polynomial of degree at most $r$ with coefficients in $\ZZ \cap [-q, q]$. Suppose that $\nu_r(f) \ge m$ and $f$ has at most $\delta \binom{n}{r}$ nonzero degree-$r$ terms. Then, for $\vec{\sigma} \sim \Slice(n, k)$,
    \[
    \sup_{\ell \in \RR} \prob{f(\vec{\sigma}) = \ell} \lesssim_{r, q, R} \max_{r' \le r} \LO_{r'}(\Omega_{r, q, R}(m)).
    \]
\end{proposition}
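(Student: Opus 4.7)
The plan is to follow the proof of \cref{large-matching} (Lemma~5.1 of \cite{jain-kwan-mubayi-tran-25}), which gives the same conclusion under the stronger hypothesis that all coefficients of $f$ lie in $\{0, 1, \ldots, q\}$, and verify that every step of the argument extends to the signed range $\ZZ \cap [-q, q]$. At a high level, the proof of \cref{large-matching} reduces anticoncentration on $\Slice(n, k)$ to anticoncentration under an independent Bernoulli measure $\Ber(k/n)^n$, exploiting the sparsity hypothesis via a comparison in the spirit of \cref{product-slice}, and then appeals to the Littlewood--Offord bound $\LO_{r'}$ (\cref{def:LO}).

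Both of these ingredients are insensitive to the signs of the coefficients of $f$. The matching condition $\nu_r(f) \ge m$ (\cref{def:nu_r}) requires only that certain coefficients be nonzero, so it is unaffected by allowing negative values. The comparison from slice to product relies on sparsity together with the magnitude bound $|c_S| \le q$, not on the signs of the $c_S$. Finally, $\LO_{r'}(\cdot)$ is defined for polynomials with arbitrary real coefficients, so on the Bernoulli side there is no restriction on signs of coefficients.

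In more detail, the argument of \cite[Lemma~5.1]{jain-kwan-mubayi-tran-25} proceeds by first isolating a small ``core'' set $B \subseteq [n]$ consisting of variables of high influence in the top-degree part of $f$; conditioning on the slice-values within $B$, one obtains a polynomial on $[n] \setminus B$ whose top-degree part still has a matching of size $\Omega_{r, q, R}(m)$ (by discarding the $O(|B|)$ matching sets from $\nu_r(f)$ that intersect $B$), and which is close enough to a product-measure polynomial that $\LO_{r'}$ applies. The main (really the only) obstacle is routine bookkeeping: every inequality of the form ``$0 \le c_S \le q$'' in the original proof must be replaced by ``$|c_S| \le q$'', and one must check that the resulting estimates still close, with constants depending only on $r$, $q$, $R$. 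I expect no new ideas to be required beyond those already present in \cite{jain-kwan-mubayi-tran-25}.
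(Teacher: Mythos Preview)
Your high-level strategy---follow the proof of \cite[Lemma~5.1]{jain-kwan-mubayi-tran-25} and check that nothing uses positivity of the coefficients---is exactly what the paper does, and the paper confirms that only minor modifications are needed. So the conclusion of your proposal is right.

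However, your description of what that proof actually does is inaccurate, and the approach you sketch would not work. The argument in \cite{jain-kwan-mubayi-tran-25} does \emph{not} proceed by isolating a small core set $B$, conditioning on the slice-values inside $B$, and then invoking a product--slice total-variation comparison in the spirit of \cref{product-slice}. Such a comparison would be useless here: the polynomial may depend on $\Theta(n)$ variables (any variable appearing in a nonzero degree-$r$ term), so the error term $s/n$ from \cref{product-slice} would be of order $1$. Instead, the proof uses a standard \emph{Rademacher coupling}: one writes $\vec{\sigma} \sim \Slice(n,k)$ as $\{v_1(\xi_1),\ldots,v_k(\xi_k)\}$ for a uniformly random sequence $\vec{v}=(v_1(-1),v_1(1),\ldots,v_k(-1),v_k(1))$ of $2k$ distinct elements of $[n]$ and independent Rademacher $\xi_1,\ldots,\xi_k$, so that $f(\vec{\sigma}) = \sum_{I} A_{\vec{v}}(I)\,\vec{\xi}^{\,I}$ for explicit coefficients $A_{\vec{v}}(I)$ depending on $\vec{v}$. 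One then shows that with high probability over $\vec{v}$ there are $\Omega_{r,q,R}(m)$ disjoint sets $I_j$ of some common size $s\le r$ with $|A_{\vec{v}}(I_j)|$ large, after which $\LO_s$ applies directly to the Rademacher polynomial. The heart of the argument is a ``minimal set'' step: for each matching edge $E_i$ one defines $\boldsymbol{E}(S)=\mathbb{E}_{W(S)}[\widehat f(W(S))]$ and finds a minimal $F_i\subseteq E_i$ with $|\boldsymbol{E}(F_i)|\ge t^{|F_i|-r}$, using sparsity to ensure $|\boldsymbol{E}(\emptyset)|<t^{-r}$ and integrality of $\widehat f(E_i)\ne 0$ to ensure $|\boldsymbol{E}(E_i)|\ge 1$.

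Once you have the correct skeleton, your sign-insensitivity claim is valid: every estimate in this argument uses only $|\widehat f(W)|\le q$, the sparsity bound on the number of nonzero top-degree terms, and the fact that nonzero integer coefficients have absolute value at least $1$. None of these require $\widehat f(W)\ge 0$. So the plan is sound, but you should rewrite the sketch to reflect the actual coupling-based proof rather than a product--slice comparison.
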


\begin{proposition} \label{dense-polynomials}
    Consider $r, q, R, n, k \in \NN$ and $\delta > 0$ such that $2k \le n \le Rk$. Let $f$ be an $n$-variable multilinear polynomial of degree at most $r$ with coefficients in $\ZZ\cap[-q, q]$, such that no $(1-\delta)\binom{n}{r}$ of its degree-$r$ coefficients are equal to the same value. Then, for $\vec{\sigma} \sim \Slice(n, k)$,
    \[
    \sup_{\ell \in \RR} \prob{f(\vec{\sigma}) = \ell} \lesssim_{r, q, R} \max_{r' \le r} \LO_{r'}(\Omega_{r, q, R}(\delta k)).
    \]
\end{proposition}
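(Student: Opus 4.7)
The plan is to reduce \cref{dense-polynomials} to its sparse counterpart, \cref{sparse-polynomials-general}. The first key observation is that the elementary symmetric polynomial $e_r$ of degree $r$ is identically equal to $\binom{k}{r}$ on $\Slice(n, k)$. Hence, for any constant $c$,
\[
\prob{f(\vec\sigma) = \ell} = \prob{(f - c \cdot e_r)(\vec\sigma) = \ell - c \binom{k}{r}}.
\]
Taking $c$ to be the most common degree-$r$ coefficient of $f$, the hypothesis that no value is shared by more than $(1 - \delta) \binom{n}{r}$ of the degree-$r$ coefficients guarantees that $g := f - c \cdot e_r$ has integer coefficients in $[-2q, 2q]$ and at least $\delta \binom{n}{r}$ nonzero degree-$r$ coefficients.

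The next step extracts a large matching in the top-degree support of $g$. Picking a uniformly random partition of $[n]$ into $\lfloor n/r \rfloor$ blocks of size $r$, each particular $r$-subset appears as a block with probability $\lfloor n/r \rfloor / \binom{n}{r}$; thus in expectation at least $\delta \lfloor n/r \rfloor$ blocks carry a nonzero coefficient in $g$. Fixing a partition attaining this expectation yields $\nu_r(g) \gtrsim_r \delta n \gtrsim_{r, R} \delta k$. If in addition $g$ already has at most $\delta(r, 2q) \binom{n}{r}$ nonzero degree-$r$ terms (with $\delta(r, 2q)$ as in \cref{sparse-polynomials-general}), we apply that lemma directly to $g$ and conclude.

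In the remaining case, $g$ is too dense to invoke \cref{sparse-polynomials-general} directly, and I plan to pass to a random sub-slice: condition on the values of $\vec\sigma$ on $[n] \setminus S$ for a randomly chosen subset $S \subseteq [n]$ of appropriate size, reducing the problem to one on $\Slice(S, k')$ with $k' \asymp_R |S|$. Since $e_r$ restricted to $S$ is again a constant on $\Slice(S, k')$, within this smaller slice we can iterate the ``subtract-the-new-majority'' step. The main obstacle is precisely this calibration: a uniform random restriction does not by itself reduce the density of nonzero top-degree terms, so one must show that interleaving restriction with subtraction in $O_{r, q}(1)$ rounds simultaneously drives the density below $\delta(r, 2q)$ and preserves a matching of size $\gtrsim_{r, q, R} \delta k$. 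Once this is achieved, \cref{sparse-polynomials-general} applied to the restricted polynomial yields the desired bound $\max_{r' \le r} \LO_{r'}(\Omega_{r, q, R}(\delta k))$.
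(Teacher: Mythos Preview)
Your reduction in the first three steps is sound: subtracting the most frequent degree-$r$ coefficient $c$ produces a polynomial $g=f-c\,e_r$ with integer coefficients in $[-2q,2q]$ and at least $\delta\binom{n}{r}$ nonzero degree-$r$ terms, and the random-partition argument gives $\nu_r(g)\gtrsim_r \delta n$. When $g$ happens to have at most $\delta(r,2q)\binom{n}{r}$ nonzero top-degree terms, \cref{sparse-polynomials-general} finishes the job.

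The gap is in the remaining case. Consider the situation where the degree-$r$ coefficients of $f$ take exactly two values, each on roughly half of $\binom{[n]}{r}$ (so the hypothesis holds with $\delta=1/2$). After you subtract either value, the resulting $g$ still has roughly half of its degree-$r$ coefficients nonzero. Restricting to a random subset $S$ preserves this $1/2$--$1/2$ split in expectation, and subtracting the new majority inside $S$ simply swaps which half is zero. No amount of iterating ``restrict then subtract the majority'' drives the density of nonzero top-degree terms below any fixed threshold, so you never reach the regime where \cref{sparse-polynomials-general} applies. More generally, whenever every coefficient value has density below $1-\delta(r,2q)$, your scheme stalls; and this is precisely the generic ``dense'' case the proposition is meant to cover. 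The claim that $O_{r,q}(1)$ rounds suffice is therefore not just unproved but false as stated.

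The paper takes a genuinely different route that avoids the sparse lemma altogether in this regime. It passes to a random $2k$-subset $U$ (so that the restricted polynomial still satisfies the ``no dominant value'' hypothesis with high probability), centres the degree-$r$ coefficients by their mean, and then invokes a lemma of Bollob\'as and Scott to find some $s\in[r]$ for which an averaged $s$-fold discrete derivative is large on a $\Omega_{r,q}(\delta)$-fraction of $s$-tuples. Via the standard Rademacher coupling $\vec\sigma\leftrightarrow(\vec v,\vec\xi)$, this produces $\Omega_{r,q}(\delta k)$ disjoint $s$-sets $I_j$ with $|A_{\vec v}(I_j)|$ large, and one finishes with the polynomial Littlewood--Offord bound in degree $s$. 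The point is that the Bollob\'as--Scott input locates the correct degree $s$ at which anticoncentration manifests, which your subtract-and-restrict loop cannot do.
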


Our proofs of \cref{sparse-polynomials-general,dense-polynomials} are minor adaptations of the arguments in \cite{jain-kwan-mubayi-tran-25}, and are deferred to \cref{sec:appendix}.

\begin{proof}[\textbf{Proof of \cref{polynomial-dichotomy} assuming \cref{sparse-polynomials-general,dense-polynomials}}]
    Our proof goes by induction on $r$ (in the base case $r = 0$, the polynomial is clearly constant on the slice). We may assume that $k$ is sufficiently large in terms of $\eps, r, q, R, K_0$ (otherwise, the statement holds trivially). In particular, we assume that $2k \le n \le (R+1)k$.

    Let $\delta = \delta(r, q) > 0$ be the constant from \cref{sparse-polynomials-general}. 

    \textbf{Case 1:} no $(1-\delta)\binom{n}{r}$ of the degree-$r$ coefficients of $f$ are equal to the same value. 
    Recall from the discussion after \cref{def:LO} that for every $r' \le r$ and $N \ge 1$
    \begin{equation} \label{eq:LO-application}
    \LO_{r'}(N) \le (\log N)^{O_r(1)} / \sqrt{N} \lesssim_{r} N^{-1/3}.
    \end{equation}
    Then, by \cref{dense-polynomials}, we have
    \[
    \prob{f(\vec{\sigma}) = \ell} \lesssim_{r, q, R} \max_{r' \le r} \LO_{r'}(\Omega_{r, q, R}(\delta k)) \lesssim_{r, q, R} (\delta k)^{-1/3}.
    \]
    Since $k$ is sufficiently large in terms of $\eps, r, q, R$, we conclude that $\prob{f(\vec{\sigma}) = \ell} \le \eps$. This completes the proof in this case.
    
    \textbf{Case 2:} $(1 - \delta) \binom{n}{r}$ degree-$r$ coefficients of $f$ are equal to some $\ell_r \in \ZZ \cap [-q, q]$. Then, the polynomial $f^*$ obtained from $f$ by subtracting $\ell_r$ from each of its degree-$r$ coefficients, has at most $\delta \binom{n}{r}$ nonzero degree-$r$ terms. Note that the coefficients of $f^*$ lie in $\ZZ \cap [-2q, 2q]$, and denote $\ell^* = \ell - \ell_r \binom{k}{r}$. By \cref{sparse-polynomials-general} (combined with \cref{eq:LO-application}), we conclude that
    \[
    \prob{f(\vec{\sigma}) = \ell} = \prob{f^*(\vec{\sigma}) = \ell^*} \lesssim_{r, q, R} \max_{r' \le r} \LO_{r'}(\Omega_{r, q, R}(\nu_r(f^*))) \lesssim_{r, q, R} (\nu_r(f^*))^{-1/3}.
    \]
    Thus, there exists a constant $C_0 = C_0(r, q, R, \eps)$ such that if $\nu_r(f^*) \ge C_0/r$, then $\prob{f(\vec{\sigma}) = \ell} \le \eps$. So, we may assume that $\nu_r(f^*) < C_0/r$. In this case, there exists a set $U \subseteq [n]$ of size at most $C_0$ such that every degree-$r$ monomial of $f^*$ contains at least one variable from $U$.

    Consider a partition $U = U_0 \sqcup U_1$. Recall that $f^*_{U_0, U_1}$ denotes the polynomial obtained from $f^*$ by setting all variables in $U_0$ to zero and all variables in $U_1$ to one. Note that $f^*_{U_0, U_1}$ has degree at most $r-1$, that its coefficients lie in $\ZZ \cap [-2qrC_0^r, 2qrC_0^r]$, and that
    \[
    3(k-|U_1|)-K_0-C_0 \le n-|U_0|-|U_1| \le R(k-|U_1|)+K_0+RC_0.
    \]
    Therefore, we can apply the induction hypothesis to $f^*_{U_0, U_1}$ and $\ell^*$. Suppose that statement 2 holds for $f^*_{U_0, U_1}$ and $\ell^*$: that is, there exist disjoint sets $Y'_0, Y'_1 \subseteq [n] \setminus U$ such that $f^*_{U_0 \cup Y'_0, U_1 \cup Y'_1}$ is constantly $\ell^*$ on the corresponding slice. Let $Y_0 = U_0 \cup Y'_0$ and $Y_1 = U_1 \cup Y'_1$. Clearly, $f-f^*$ is constantly $\ell_r\binom{k}{r}$ on $\Slice(n, k)$, and hence $(f-f^*)_{Y_0, Y_1}$ is constantly $\ell_r\binom{k}{r}$ on $\Slice(n - |Y_0| - |Y_1|, k - |Y_1|)$. Therefore, $f_{Y_0, Y_1} = f^*_{Y_0, Y_1} + (f - f^*)_{Y_0, Y_1}$ is constantly $\ell = \ell^* + \ell_r\binom{k}{r}$ on $\Slice(n - |Y_0| - |Y_1|, k - |Y_1|)$. This means that statement 2 also holds for $f$ and $\ell$.

    So, we may assume that, for every partition $U = U_0 \sqcup U_1$, statement 1 of \cref{polynomial-dichotomy} holds for $f^*_{U_0, U_1}$ and $\ell^*$. In this case, we claim that statement 1 holds for $f$ and $\ell$ as well. Indeed, let $U'_0 = \{i \in U : \sigma_i = 0\}$ and $U'_1 = \{i \in U : \sigma_i = 1\}$ be the random partition of $U$ induced by $\vec{\sigma} \sim \Slice(n, k)$. Then, conditioning on the outcome of $U'_0$ and $U'_1$, we have
    \[
    \prob{f(\vec{\sigma}) = \ell} = \prob{f^*(\vec{\sigma}) = \ell^*} = \mathbb{E}_{U'_0, U'_1}\left[\prob{f^*_{U'_0, U'_1}(\vec{\sigma}) = \ell^* \;|\; U'_0, U'_1}\right] \le \expecteds{U'_0, U'_1}{\eps} = \eps. 
    \]
    This completes the proof.
\end{proof}

\begin{proof}[\textbf{Proof of \cref{hypergraphs-weak}}]    
    We may assume that $k$ is sufficiently large in terms of $\eps, r$ (otherwise, the statement holds trivially). Recall that $\ind_r(k, \ell) > \eps$ is the limit of a non-increasing sequence $N_r(n, k, \ell) / \binom{n}{k}$ as $n \to \infty$. In particular, $N_r(n, k, \ell) > \eps \binom{n}{k}$ for $n = 3k$, and hence there exists an $r$-uniform hypergraph on $n = 3k$ vertices such that a uniformly random $k$-subset of its vertices induces exactly $\ell$ edges with probability greater than $\eps$. Interpreting this hypergraph as a homogeneous multilinear degree-$r$ polynomial $f$ with coefficients in $\{0, 1\}$, we have $\prob{f(\vec{\sigma}) = \ell} > \eps$ for $\vec{\sigma} \sim \Slice(n, k)$.

    Therefore, by \cref{polynomial-dichotomy} (applied with $q = 1$, $R = 3$), there exist $C' = C'(\eps, r)$ and disjoint sets $Y_0, Y_1 \subseteq [n]$ of size at most $C'$ such that the polynomial $f_{Y_0, Y_1}$ (obtained from $f$ by setting all variables in $Y_0$ to zero and all variables in $Y_1$ to one) is constantly $\ell$ on $\Slice(n - |Y_0| - |Y_1|, k - |Y_1|)$. This means that for every $0 \le d \le r$, all degree-$d$ coefficients of $f_{Y_0, Y_1}$ are equal to some $\ell'_d \in \RR$, and
    \begin{equation} \label{eq:linear-combination-with-Y_1}
    \ell = \sum_{d = 0}^r \ell'_d \binom{k - |Y_1|}{d}.
    \end{equation}
    Furthermore, since $f$ is a homogeneous multilinear polynomial of degree $r$ with coefficients in $\{0, 1\}$, the coefficients of $f_{Y_0, Y_1}$ are non-negative integers at most $(C')^r$. Hence, $\ell'_0, \ldots, \ell'_r \in \{0, \ldots, (C')^r\}$.
    Then, using the following elementary identities (valid for all integers $a, b, c \ge 0$, $a \ge b+c$)
    \[
    \binom{a+c}{b} = \sum_{i=0}^{b} \binom{c}{i} \binom{a}{b-i}, \qquad \binom{a-c}{b} = \sum_{i=0}^{b} (-1)^i \binom{c+i-1}{i} \binom{a}{b-i},
    \]
    we can rewrite \cref{eq:linear-combination-with-Y_1} as
    \[
    \ell = \sum_{d=0}^r \ell_{r-d} \binom{k-(r-d)}{d} = \sum_{d=0}^r \ell_d \binom{k-d}{r-d},
    \]
    where $\ell_0, \ldots, \ell_r \in \ZZ \cap [-C, C]$ and $C = r ((C'+r)C')^r$.
    Since $0 \le \ell \le \binom{k}{r}$ and $k$ is sufficiently large in terms of $C, r$, we must have $\ell_0 \in \{0, 1\}$. This completes the proof.
\end{proof}

\section{Concluding remarks} \label{sec:concluding}

We have proved a number of results going beyond the edge-statistics theorem, but there is still much scope for further research.

\subsection{Edge-statistics for set systems}\label{subsec:set-systems}

Recall that $\ind_r(k, \ell)$ is bounded by the maximum probability, over $r$-uniform hypergraphs on $n$ vertices, that a uniformly random $k$-subset of vertices induces exactly $\ell$ edges. When $k$ is sufficiently large in terms of $r, \ell$, and $n/k$, it is essentially equivalent to consider a set obtained by including each vertex independently with probability $k/n$ (see \cref{reduction-to-bernoulli} for a precise statement in the case $r = 2$).
Motivated by this, we suggest studying the following variant of edge-inducibilities for independent samples from set systems of unbounded uniformity: for an integer $\ell$ and $p \in (0, 1)$, we define
\begin{equation} \label{eq:ind_*}
\ind_*(\ell, p) = \sup_{\mc{A}, B} \prob{\#\{A \in \mc{A} : A \subseteq B_p\} = \ell},
\end{equation}
where the supremum is over all finite sets $B$ and families of non-empty subsets $\mc{A} \subseteq 2^B$, and $B_p$ denotes a $p$-random subset of $B$ (including each element with probability $p$ independently).
A standard subsampling argument shows that $\ind_*(\ell, p)$ is non-decreasing in $p$, and thus we can further define 
\[
\ind_*(\ell) = \lim_{p \to 0} \ind_*(\ell, p).
\]
Equivalently, one can interpret $\ind_*(\ell, p)$ as the maximum probability that the value of a multilinear polynomial (with coefficients in $\{0, 1\}$ and no constant term) at independent $\Ber(p)$ random variables equals $\ell$. Using this reformulation, we notice that $\ind_*(\ell) \le 1/e$ for every $\ell \ge 1$ by a result of Fox, Kwan, and Sauermann~\cite[Theorem 1.8]{fox-kwan-sauermann-21}. While this bound is sharp for $\ell = 1$, we conjecture that for $\ell \ge 2$ it can be improved as follows.
\begin{conjecture} \label{conjecture-set-systems}
    For every $\ell \ge 2$, we have $\ind_*(\ell) \le 2/e^2$.
\end{conjecture}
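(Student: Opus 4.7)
The natural extremal example for $\ell \ge 2$ is the linear polynomial $f = x_1 + \ldots + x_n$ at $p = \ell/n$, whose value converges in distribution to $\Poi(\ell)$, so that $\prob{f(\vec\xi(p)) = \ell} \to \ell^\ell/(e^\ell \ell!)$; this is maximised over $\ell \ge 2$ at $\ell = 2$, giving $2/e^2$. My plan is to combine the finite-reduction framework of \cref{reduction-to-finite}, extended to arbitrary degree, with a Poisson-type anticoncentration argument, with the goal of showing that the linear polynomial is (asymptotically) the unique extremiser.

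For the structural reduction, I would define $\mc{G}^{\le r}(m)$ as the set of multilinear polynomials of degree at most $r$ with coefficients in $\{0,1\}$ and no constant term, such that substituting $x_i = 1$ for any variable yields a polynomial either outside of this class (i.e.\ with a coefficient $\ge 2$ or a nonzero constant term) or with fewer than $m$ linear terms. The induction-on-variables argument of \cref{reduction-to-finite} generalises verbatim (using \cref{large-linear-part} exactly as in the quadratic case), giving
\[
\sup_{f}\prob{f(\vec\xi(p)) = \ell} \le \max\Big(\binmax{m}{p},\; \max_{g \in \mc{G}^{\le r}(m)}\prob{g(\vec\xi(p)) = \ell}\Big),
\]
with the number of variables in each $g \in \mc{G}^{\le r}(m)$ bounded by a function of $r$ and $m$ (analogous to \cref{Gn-is-small}). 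A sparsification step --- omitting monomials of very large degree, whose probability of evaluating to $1$ is at most $p^r$ --- reduces the unbounded-uniformity setting to bounded degree. Within the resulting finite class, for $\ell$ large in terms of $r, m$, Markov's inequality applied to $\expected{g(\vec\xi(p))}$ suffices, extending the argument of \cref{0.27}.

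The crux is the small-$\ell$ case, and in particular $\ell = 2$. Here I would view $g(\vec\xi(p)) = \sum_S X_S$ as a sum of indicators $X_S = \mathds{1}[S \subseteq T_p]$ over the nonzero monomials $S$, and apply a Poisson approximation via the Chen--Stein method: when the dependency graph on monomials (with edges for intersecting pairs $S, S'$) is sparse, $\mathrm{d_{TV}}(g(\vec\xi(p)), \Poi(\mu)) = o(1)$ for $\mu = \expected{g(\vec\xi(p))}$, and then $\prob{g = \ell} \le \max_\mu \prob{\Poi(\mu) = \ell} = \ell^\ell/(e^\ell \ell!) \le 2/e^2$. When the dependency graph is dense --- as for $f = x_0(x_1 + \ldots + x_s)$, where the Chen--Stein remainder is unbounded --- a direct calculation instead shows $\prob{f = \ell} = o(1)$ because all contributing monomials share a ``hub'' variable whose probability of being $1$ is $p \to 0$; one would then induct on the number of variables by conditioning on such hub variables.

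The main obstacle will be stitching these two regimes together precisely enough to yield the sharp constant $2/e^2$ rather than something larger. An alternative path for the small-$\ell$ step is an antichain/BLYM argument in the spirit of \cref{antichain-expectation}: identify a set $W$ of ``important'' variables such that every minimal $W' \subseteq W$ witnessing $g = 2$ belongs to an antichain of subsets of $W$, each of size $\ge 2$, and then \cref{antichain-expectation} directly gives $2/e^2 + o(1)$. In either approach, the delicate point is ruling out intermediate configurations --- neither purely linear nor purely hub-like --- for which both the Poisson approximation and the BLYM bound are potentially off by a multiplicative factor. As the remark following \cref{computational_data} notes, it is unlikely that a purely finite-computational approach (even for much larger $m$) can reach the sharp constant $2/e^2$, so a structural analytical input of the above type will be essential.
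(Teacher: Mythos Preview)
This statement is a \emph{conjecture}, not a theorem: the paper poses it in the concluding remarks as an open problem and does not attempt to prove it. There is therefore no ``paper's own proof'' to compare against. The paper in fact hints that its own machinery may be insufficient here --- the remark after \cref{computational_data} (which you yourself cite) says it is ``not clear whether this approach can yield the optimal bound of $2/e^2 + o(1)$ for $\ell = 2$ and $p \to 0$'', and \cref{conjecture-set-systems} is precisely the arbitrary-uniformity generalisation of that unresolved case.

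Your proposal is not a proof but an outline with acknowledged gaps, and you correctly identify where the difficulty lies. The finite-reduction step and the large-$\ell$ Markov argument are fine and do generalise as you describe. But the crux --- your ``stitching'' of the Chen--Stein/Poisson regime with the hub-variable regime to get the sharp constant at $\ell = 2$ --- is exactly the obstacle the paper could not overcome even in the simpler graph case (degree $2$, where \cref{better-than-1/e} only achieves $0.33$ rather than $2/e^2$). Your BLYM alternative also runs into trouble: the antichain-of-size-$\ge 2$ argument in \cref{antichain-expectation} required, in the proof of \cref{close-to-multiple}, a very specific structure (a small set $W$ of high-degree ``important'' vertices with a large gap to the rest), and there is no obvious analogue of this structure for an arbitrary set system with $\ell = 2$. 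So while the scaffolding you propose is reasonable, the central analytic input you would need is genuinely missing --- which is consistent with the statement being posed as a conjecture.
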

If true, this would be sharp for $\ell \in \{2, 3\}$, as one can take $n = \lceil 2/p \rceil$ and either $\mc{A} = \{A \subseteq [n] : |A| = 1\}$ or $\mc{A} = \{A \subseteq [n] : 1 \le |A| \le 2\}$. 

\cref{conjecture-set-systems} should be viewed as an ``arbitrary uniformity'' analogue of \cref{2/e^2} in the regime $\ell = O(1)$. Indeed, \cref{reduction-to-bernoulli} implies that for a fixed $\ell$ and $k \to \infty$, we have $\ind(k, \ell) \le \ind_*(\ell) + o(1)$, and thus a proof of this conjecture would (combined with the results of this paper) yield the optimal $2/e^2 + o(1)$ bound in \cref{better-than-1/e}. In fact, for this application, it would suffice to restrict the supremum in \cref{eq:ind_*} to families consisting of sets of size $2$ (a full resolution would be useful for generalising our results to edge-inducibilities in hypergraphs).

\subsection{Tightness of \texorpdfstring{\cref{bulk,far-from-multiple}}{}: constructions and discussion}
\label{subsec:lower-bounds}

The bound in \cref{bulk} is tight (up to a multiplicative constant factor) when $\ell$ has the form $a(k-a)$ for some integer $a \in [0, k/2]$. Jain, Kwan, Mubayi, and Tran~\cite{jain-kwan-mubayi-tran-25} conjectured that it can be improved for ``generic'' values of $\ell$: more precisely, that for a $1 - o(1)$ proportion (as $k \to \infty$) of values of $\ell$ in the range $[0, \binom{k}{2}]$, one has $\ind(k, \ell) \le k^{-1/2-\delta}$ for some absolute constant $\delta > 0$.

We suspect that a similar phenomenon might occur in the sparse regime. Namely, the following constructions show that for every fixed $a \ge 0$, the bound in \cref{far-from-multiple} is tight for $\Omega_a(\sqrt{k})$ different values of $\ell$ within the interval $[ak, (a+1)k]$.
\begin{itemize}
    \item Suppose that $1 \le \ell_0 \le k - \sqrt{k}$ (thus, the $\ell_0^{-1/4}$ term in the bound from \cref{far-from-multiple} dominates), and that $\ell_0 = \binom{m}{2} - a^2$ for some $m \in \NN$. Let $n$ be much larger than $k$, and consider the following graph $G$ on $n$ vertices. Pick two disjoint sets of vertices $A, M \subseteq [n]$ such that $|A| = an/k, |M| = mn/k$. Let $G$ contain all the edges between $A$ and $[n] \setminus A$, as well as all the edges inside $M$. Then, one can check that a random $k$-subset of its vertices contains exactly $a$ vertices of $A$ and exactly $m$ vertices of $M$ with probability $\Theta(1/\sqrt{(a+1)m})$. If both these events occur, then the subgraph induced by this set has exactly $a(k-a) + \binom{m}{2} = ak + \ell_0$ edges. So, 
    \[
    \ind(k, ak + \ell_0) \gtrsim \frac{1}{\sqrt{(a+1)m}} \gtrsim_a \frac{1}{\ell_0^{1/4}}. 
    \]
    \item Suppose that $k - \sqrt{k} \le \ell_0 \le k-1$ (thus, the $(k-\ell_0)^{-1/2}$ term dominates), and that $k-\ell_0 = (a+1)(m+a+1)$ for some $m \in \NN$. Similarly, pick two disjoint sets $A, M \subseteq [n]$, $|A| = (a+1)n/k, |M| = mn/k$. Let $G$ contain all the edges between $A$ and $[n] \setminus (A \cup M)$. Then, with probability $\Theta(1/\sqrt{(a+1)m})$, a random $k$-subset of its vertices contains exactly $a+1$ vertices of $A$ and exactly $m$ vertices of $M$, and thus exactly $(a+1)(k-(a+1)-m) = ak + \ell_0$ edges. So,     
    \[
    \ind(k, ak + \ell_0) \gtrsim \frac{1}{\sqrt{(a+1)m}} \gtrsim \frac{1}{(k-\ell_0)^{1/2}}. 
    \]
\end{itemize}

In particular, $\ind(k, \ell) \gtrsim \ell^{-1/4}$ for each $\ell \le k$ that is of the form $\ell = \binom{m}{2}$. On the other hand, for an arbitrary $\ell \le k/2$, the best construction we know comes from a decomposition $\ell = \binom{m_1}{2} + \ldots + \binom{m_s}{2}$: one can take the host graph to be a disjoint union of cliques of sizes $m_1 n/k, \ldots, m_s n/k$ (and isolated vertices) to conclude that $\ind(k, \ell) \ge (\prod_{i = 1}^s m_i)^{-1/2}$. However, for most values of $\ell$, one can only find such a decomposition with $\prod_{i = 1}^s m_i = \ell^{1 + o(1)}$.
Therefore, we conjecture that for $1 - o(1)$ proportion (as $k \to \infty$) of values of $\ell$ in the range $[0, k]$, one has $\ind(k, \ell) \le \ell^{-1/4-\delta}$ for some absolute constant $\delta > 0$.

\bibliographystyle{plain}

\bibliography{references}

\begin{thebibliography}{10}

\bibitem{alon-99}
Noga Alon.
\newblock Combinatorial {N}ullstellensatz.
\newblock {\em Combinatorics, Probability and Computing}, 8(1--2):7--29, 1999.

\bibitem{alon-hefetz-krivelevich-tyomkyn-20}
Noga Alon, Dan Hefetz, Michael Krivelevich, and Mykhaylo Tyomkyn.
\newblock Edge-statistics on large graphs.
\newblock {\em Combin. Probab. Comput.}, 29(2):163--189, 2020.

\bibitem{barbour-hall-1984}
A.~D. Barbour and Peter Hall.
\newblock On the rate of {P}oisson convergence.
\newblock {\em Mathematical Proceedings of the Cambridge Philosophical
  Society}, 95(3):473--480, 1984.

\bibitem{bodnar-pikhurko-25}
Levente Bodn\'ar and Oleg Pikhurko.
\newblock Some exact inducibility-type results for graphs via flag algebras.
\newblock July 2025.
\newblock Preprint, arXiv:2507.01596.

\bibitem{bollobas}
B.~Bollob\'as.
\newblock On generalized graphs.
\newblock {\em Acta Math. Acad. Sci. Hungar.}, 16:447--452, 1965.

\bibitem{bollobas-scott-15}
B\'ela Bollob\'as and Alex Scott.
\newblock Intersections of hypergraphs.
\newblock {\em J. Combin. Theory Ser. B}, 110:180--208, 2015.

\bibitem{ehm-91}
Werner Ehm.
\newblock Binomial approximation to the {P}oisson binomial distribution.
\newblock {\em Statist. Probab. Lett.}, 11(1):7--16, 1991.

\bibitem{erdos-45}
P.~Erd{\H o}s.
\newblock On a lemma of {L}ittlewood and {O}fford.
\newblock {\em Bull. Amer. Math. Soc.}, 51:898--902, 1945.

\bibitem{fox-kwan-sauermann-21}
Jacob Fox, Matthew Kwan, and Lisa Sauermann.
\newblock Combinatorial anti-concentration inequalities, with applications.
\newblock {\em Mathematical Proceedings of the Cambridge Philosophical
  Society}, 171(2):227--248, 2021.

\bibitem{fox-sauermann-20}
Jacob Fox and Lisa Sauermann.
\newblock A completion of the proof of the edge-statistics conjecture.
\newblock {\em Adv. Comb.}, pages Paper No. 4, 52, 2020.

\bibitem{jain-kwan-mubayi-tran-25}
Vishesh Jain, Matthew Kwan, Dhruv Mubayi, and Tuan Tran.
\newblock The {E}dge-{S}tatistics {C}onjecture for {H}ypergraphs.
\newblock {\em Int. Math. Res. Not. IMRN}, (18), 2025.

\bibitem{kane-14}
Daniel~M. Kane.
\newblock The correct exponent for the {G}otsman-{L}inial conjecture.
\newblock {\em Comput. Complexity}, 23(2):151--175, 2014.

\bibitem{kwan-sauermann-23}
Matthew Kwan and Lisa Sauermann.
\newblock Resolution of the quadratic {Littlewood}--{Offord} problem.
\newblock December 2023.
\newblock Preprint, arXiv:2312.13826.

\bibitem{kwan-sudakov-tran-19}
Matthew Kwan, Benny Sudakov, and Tuan Tran.
\newblock Anticoncentration for subgraph statistics.
\newblock {\em J. Lond. Math. Soc. (2)}, 99(3):757--777, 2019.

\bibitem{littlewood-offord-43}
J.~E. Littlewood and A.~C. Offord.
\newblock On the number of real roots of a random algebraic equation. {III}.
\newblock {\em Rec. Math. [Mat. Sbornik] N.S.}, 12/54:277--286, 1943.

\bibitem{LLTTY-17}
Alexander~E. Litvak, Anna Lytova, Konstantin Tikhomirov, Nicole
  Tomczak-Jaegermann, and Pierre Youssef.
\newblock Adjacency matrices of random digraphs: Singularity and
  anti-concentration.
\newblock {\em Journal of Mathematical Analysis and Applications},
  445(2):1447--1491, 2017.
\newblock A special issue of JMAA dedicated to Richard Aron.

\bibitem{liu-mubayi-reiher-23}
Xizhi Liu, Dhruv Mubayi, and Christian Reiher.
\newblock The feasible region of induced graphs.
\newblock {\em J. Combin. Theory Ser. B}, 158:105--135, 2023.

\bibitem{lubell}
D.~Lubell.
\newblock A short proof of {S}perner's lemma.
\newblock {\em J. Combinatorial Theory}, 1:299, 1966.

\bibitem{martinsson-mousset-noever-trujic-19}
Anders Martinsson, Frank Mousset, Andreas Noever, and Milo\v{s} Truji\'c.
\newblock The edge-statistics conjecture for {$\ell\ll k^{6/5}$}.
\newblock {\em Israel J. Math.}, 234(2):677--690, 2019.

\bibitem{meka-nguyen-vu-16}
Raghu Meka, Oanh Nguyen, and Van Vu.
\newblock Anti-concentration for polynomials of independent random variables.
\newblock {\em Theory Comput.}, 12:Paper No. 11, 16, 2016.

\bibitem{meshalkin}
L.~D. Meshalkin.
\newblock Generalization of {S}perner's theorem on the number of subsets of a
  finite set.
\newblock {\em Theory of Probability and Its Applications}, 8(2):203--204,
  1963.

\bibitem{pippenger-golumbic-75}
Nicholas Pippenger and Martin~Charles Golumbic.
\newblock The inducibility of graphs.
\newblock {\em J. Combinatorial Theory Ser. B}, 19(3):189--203, 1975.

\bibitem{ueltzen-24}
Richard Ueltzen.
\newblock Characterizing graphs with high inducibility.
\newblock November 2024.
\newblock Preprint, arXiv:2411.17362.

\bibitem{yamamoto}
K.~Yamamoto.
\newblock Logarithmic order of free distributive lattice.
\newblock {\em J. Math. Soc. Japan}, 6:343--353, 1954.

\end{thebibliography}

\appendix \section{Proofs of \cref{sparse-polynomials-general,dense-polynomials}}
\label{sec:appendix}

Throughout this section, we denote the set of all subsets of some finite set $X$ that have size $r$ by $\binom{X}{r}$, and the set of all subsets of $X$ that have size at most $r$ by $\binom{X}{\le r}$. We also write $\alpha \gg (\beta_1, \ldots, \beta_M)$ as a shorthand for ``$\alpha$ is sufficiently large in terms of $\beta_1, \ldots, \beta_M$''.

First, we review a standard coupling that allows us to interpret a polynomial on the slice as a polynomial of independent Rademacher random variables. Let $f$ be a multilinear polynomial of $\vec{\sigma} \sim \Slice(n, k)$ of degree at most $r$:
\[
f(\vec{\sigma}) = \sum_{W \in \binom{[n]}{\le r}} \widehat f(W) \vec{\sigma}^W.
\]
Let $\vec{v} = (v_1(-1), v_1(1), \ldots, v_k(-1), v_k(1))$ be a uniformly random sequence of $2k$ distinct elements of $[n]$, and let $\vec{\xi} = (\xi_1, \ldots, \xi_k)$ be a sequence of independent Rademacher random variables. Then $\{v_1(\xi_1), \ldots, v_k(\xi_k)\}$ is a uniformly random $k$-subset of $[n]$. Interpreting $\vec{\sigma} \sim \Slice(n, k)$ as a function of $\vec{v}$ and $\vec{\xi}$, we can rewrite $f(\vec{\sigma})$ as a polynomial of $\vec{\xi}$ with coefficients depending on $\vec{v}$. Specifically, a calculation identical to \cite[Lemma 3.4]{jain-kwan-mubayi-tran-25} shows that
\begin{equation} \label{eq:def-A}
    f(\vec{\sigma}) = \sum_{I \in \binom{[k]}{\le r}} A_{\vec{v}}(I) \vec{\xi}^{\,I}, \quad \text{ where } \quad
    A_{\vec{v}}(I) = \sum_{W \in \mc{W}_{\vec{v}}(I)} 2^{-|W|}(-1)^{|W \cap \{v_i(-1) : i \in I\}|} \widehat f(W),
\end{equation}
and $\mc{W}_{\vec{v}}(I)$ is the family of subsets of $\{v_1(-1), v_1(1), \ldots, v_k(-1), v_k(1)\}$ of size at most $r$ containing exactly one of $v_i(-1)$ and $v_i(1)$ for each $i \in I$. We further decompose $A_{\vec{v}}(I)$ as $A^{=r}_{\vec{v}}(I) + A^{<r}_{\vec{v}}(I)$, where
\begin{equation} \label{eq:A^{=r}}
    A^{=r}_{\vec{v}}(I) = 2^{-r}\sum_{\substack{W \in \mc{W}_{\vec{v}}(I) \\ |W| = r}} (-1)^{|W \cap \{v_i(-1) : i \in I\}|} \widehat f(W), \quad A^{<r}_{\vec{v}}(I) = \sum_{\substack{W \in \mc{W}_{\vec{v}}(I) \\ |W| \le r-1}} 2^{-|W|}(-1)^{|W \cap \{v_i(-1) : i \in I\}|} \widehat f(W).
\end{equation}
Note that for each $s \in [r]$ with $s \ge |I|$, $\mc{W}_{\vec{v}}(I)$ contains at most $2^s (2k)^{s-|I|}$ sets of size $s$. Therefore, if the coefficients of $f$ satisfy $|\widehat f(W)| \le q$ for each $W \in \binom{[n]}{\le r}$, then for every $I \in \binom{[k]}{\le r}$ we have
\begin{equation} \label{eq:A-bounds}
|A^{=r}_{\vec{v}}(I)| \lesssim_r q k^{r-|I|}, \qquad |A^{<r}_{\vec{v}}(I)| \lesssim_r q k^{r-|I|-1}.
\end{equation}

We also record the following simple corollary of our concentration inequality on the slice (\cref{concentration-on-the-slice}).
\begin{proposition} \label{density-concentration}
    Consider $r, R, n, k_0 \in \NN$ and $\delta > 0$ such that $k_0 \le n \le Rk_0$. Let $\mc{H}$ be an $r$-uniform hypergraph on $n$ vertices with $\delta \binom{n}{r}$ edges, and let $U$ be a uniformly random $k_0$-subset of $[n]$. Then, with probability $1 - \exp(-\Omega_{r, R}(\delta k_0))$, the number of edges in the subhypergraph of $\mc{H}$ induced by the vertices of $U$ lies in the interval $[\frac{\delta}{2}\binom{k_0}{r}, \frac{3\delta}{2}\binom{k_0}{r}]$.
\end{proposition}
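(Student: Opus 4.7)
The plan is to apply the Azuma--Hoeffding-type concentration inequality on the slice (\cref{concentration-on-the-slice}) to the function $f : \{0,1\}^n \to \RR$ that sends a characteristic vector $\mathds{1}_U \in \{0,1\}^n$ to $|\{e \in \mc{H} : e \subseteq U\}|$, the number of edges induced by $U$. I would first verify that $\expected{f(\vec{\sigma})} = \frac{\delta}{2}\binom{k_0}{r} \cdot 2$: by linearity and the fact that each fixed $r$-subset is contained in a uniformly random $k_0$-subset with probability $\binom{k_0}{r}/\binom{n}{r}$, one has $\expected{f(\vec{\sigma})} = e(\mc{H}) \cdot \binom{k_0}{r}/\binom{n}{r} = \delta \binom{k_0}{r}$, which sits right at the centre of the target interval.

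Next I would estimate the Lipschitz constants in \cref{concentration-on-the-slice}. Flipping the $i$-th coordinate of $\mathds{1}_U$ can change $f$ by at most the number of edges of $\mc{H}$ containing vertex $i$, so $a_i \le \deg_{\mc{H}}(i) \le \binom{n-1}{r-1}$. Using $\sum_i \deg_{\mc{H}}(i) = r \cdot e(\mc{H}) = r \delta \binom{n}{r}$ and the crude bound $\sum_i a_i^2 \le (\max_i a_i)(\sum_i a_i)$, I get
\[
\sum_{i=1}^n a_i^2 \;\le\; \binom{n-1}{r-1} \cdot r \delta \binom{n}{r} \;\lesssim_r\; \delta n^{2r-1} \;\lesssim_{r, R}\; \delta k_0^{2r-1},
\]
where in the last step I used $n \le R k_0$.

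Finally, I would apply \cref{concentration-on-the-slice} with deviation $t = \frac{\delta}{2}\binom{k_0}{r} \gtrsim_r \delta k_0^r$. Substituting the bound on $\sum a_i^2$ gives exponent
\[
\frac{t^2}{8\sum_i a_i^2} \;\gtrsim_{r, R}\; \frac{(\delta k_0^r)^2}{\delta k_0^{2r-1}} \;=\; \delta k_0,
\]
so $\prob{|f(\vec{\sigma}) - \delta\binom{k_0}{r}| \ge \frac{\delta}{2}\binom{k_0}{r}} \le 2\exp(-\Omega_{r,R}(\delta k_0))$, which is the desired conclusion. No step here is genuinely hard; the only thing to watch is to write out the asymptotic estimates carefully so that the dependences on $r$ and $R$ are tracked through both the binomial coefficients and the $\sum a_i^2$ bound.
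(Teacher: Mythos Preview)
Your proposal is correct and essentially identical to the paper's own proof: both apply \cref{concentration-on-the-slice} with $a_i=\deg_{\mc H}(i)$, compute $\expected{f(\vec\sigma)}=\delta\binom{k_0}{r}$, bound $\sum_i a_i^2\le(\max_i a_i)(\sum_i a_i)\le \binom{n-1}{r-1}\cdot r\delta\binom{n}{r}$, and substitute $t=\tfrac{\delta}{2}\binom{k_0}{r}$ to obtain the exponent $\Omega_{r,R}(\delta k_0)$. The only cosmetic difference is that the paper keeps the binomial coefficients explicit in the final exponent rather than converting to $k_0^{2r-1}$ first.
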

\begin{proof}
    Note that the number of edges in the subhypergraph of $\mc{H}$ induced by $U$ can be interpreted as a degree-$r$ polynomial on $\Slice(n, k_0)$, and its expected value is $\delta \binom{k_0}{r}$. So, we can apply \cref{concentration-on-the-slice} with $a_v = |\{S \in E(\mc{H}) : v \in S\}|$. Since
    \[
    \sum_{v \in V(\mc{H})} a_v^2 \le \left(\sum_{v \in V(\mc{H})} a_v\right) \cdot \max_{v \in V(\mc{H})} a_v \le r |E(\mc{H})| \cdot \binom{n-1}{r-1} \le r \delta \binom{n}{r} \binom{n-1}{r-1},
    \]
    we conclude that
    \[
    \mathbb{P}\left[\frac{|\{S \in E(\mc{H}) : S \subseteq U\}|}{\binom{k_0}{r}} \notin [\delta/2, 3\delta/2]\right] \le 2\exp\left(-\frac{\left(\frac{\delta}{2}\binom{k_0}{r}\right)^2}{r \delta \binom{n}{r} \binom{n-1}{r-1}}\right) \le \exp(-\Omega_{r, R}(\delta k_0)). \qedhere
    \]
\end{proof}

Now we prove \cref{sparse-polynomials-general}, which extends \cite[Lemma 5.1]{jain-kwan-mubayi-tran-25} to polynomials with possibly negative coefficients. The proof in \cite{jain-kwan-mubayi-tran-25} does not apply verbatim in this setting, but minor modifications suffice.

\begin{proof}[\textbf{Proof of \cref{sparse-polynomials-general}}]    
    We may assume that $(k/m) \gg (r, q, R)$ (otherwise, one can divide $m$ by a large enough constant depending on $r, q, R$). 

    Let $\vec{v} = (v_1(-1), v_1(1), \ldots, v_k(-1), v_k(1))$ be a uniformly random sequence of $2k$ distinct elements of $[n]$. Also, let
    \[
    \vec{v}(-1) = (v_1(-1), \ldots, v_k(-1)), \quad \vec{v}(1) = (v_1(1), \ldots, v_k(1)),
    \]
    \[
    U(-1) = \{v_1(-1), \ldots, v_k(-1)\}, \quad U(1) = \{v_1(1), \ldots, v_k(1)\}, \quad U = U(-1) \cup U(1).
    \]
    Let $\mc{H}$ be the $r$-uniform hypergraph on vertex set $[n]$ such that $E \subseteq [n]$ is an edge of $\mc{H}$ if and only if $|E| = r$ and $\widehat f(E) \neq 0$. Since $\nu_r(f) \ge m$, $\mc{H}$ has a matching of size $m$. Then, by \cite[Lemma 5.2]{jain-kwan-mubayi-tran-25}, the subhypergraph of $\mc{H}$ induced by the vertices of $\vec{v}(1)$ has a matching of size $m' = \Omega_{r, R}(m)$ with probability $1 - O_{r, R}(1/m)$. Condition on such an outcome of $\vec{v}(1)$, and let $E_1, \ldots, E_{m'}$ be the edges of this matching. Also, we condition on the outcome of the \emph{set} $U(-1)$ (but not on its ordering $\vec{v}(-1)$).

    For a set $S \subseteq U(1)$, let $W(S)$ be a uniformly random set subject to the constraints $S \subseteq W(S) \subseteq U$ and $|W(S)| = r$, and define
    \[
    \boldsymbol{E}(S) = \expecteds{W(S)}{\widehat f(W(S))}.
    \]
    Let $t \gg r$, and take $\delta = 1/(2 q t^r)$. So, by the assumption of the proposition, $\mc{H}$ has at most $\delta \binom{n}{r}$ edges. By \cref{density-concentration} (applied with $k_0 = 2k$), with probability $1 - \exp(-\Omega_{r, q, R}(k))$, the subhypergraph of $\mc{H}$ induced by the vertices of $U$ has at most $\frac{3\delta}{2}\binom{2k}{r}$ edges. From now on, we consider the case when this happens. Then,
    \[
    |\boldsymbol{E}(\emptyset)| \le \frac{q \cdot \frac{3\delta}{2}\binom{2k}{r}}{\binom{2k}{r}} < t^{-r},
    \]
    and $|\boldsymbol{E}(E_i)| \ge 1$ for every $i \in [m']$. Therefore, for every $i \in [m']$, the \emph{minimal} set $F_i \subseteq E_i$ such that $|\boldsymbol{E}(F_i)| \ge t^{|F_i| - r}$ is non-empty. Let $m'' = \lceil m'/r \rceil$. Without loss of generality, we may assume that $F_1, \ldots, F_{m''}$ have the same size $s \in [r]$.

    For every $j \in [m'']$, denote $I_j = \{i \in [k] : v_i(1) \in F_j\}$ and $\vec{V}_j = (v_i(-1))_{i \in I_j}$. Also, let $A_{\vec{v}}(I_j) = A^{=r}_{\vec{v}}(I_j) + A^{<r}_{\vec{v}}(I_j)$ be as in \cref{eq:def-A} and \cref{eq:A^{=r}}.

    \begin{claim} \label{claim:A-large}
        With probability $\Omega_{r, t, q}(1)$ over the choice of $\vec{V}_j$, we have $|A_{\vec{v}}(I_j)| \gtrsim_{r, t} k^{r-s}$.
    \end{claim}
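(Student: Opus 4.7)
The plan is to prove the claim by a two-step strategy: first, bound $|\mathbb{E}_{\vec{V}_j}[A^{=r}_{\vec v}(I_j)]|$ from below by $\gtrsim_{r,t} k^{r-s}$, and then convert this to a pointwise lower bound via a reverse Markov inequality. Note that by \cref{eq:A-bounds} we have $|A^{<r}_{\vec v}(I_j)| \lesssim_{r,q} k^{r-s-1}$ deterministically, which is much smaller than $k^{r-s}$ for $k$ large, so it suffices to prove the bound for $A^{=r}_{\vec v}(I_j)$. Also, \cref{eq:A-bounds} gives the deterministic upper bound $|A^{=r}_{\vec v}(I_j)| \le M$ for $M = O_{r,q}(k^{r-s})$. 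Once we have $\mu := |\mathbb{E}_{\vec V_j}[A^{=r}_{\vec v}(I_j)]| \gtrsim_{r,t} k^{r-s}$, the standard inequality $\mu \le M \cdot \mathbb{P}[|A^{=r}_{\vec v}(I_j)| \ge \mu/2] + \mu/2$ (which follows from writing $\mu = \mathbb{E}[X\mathbf{1}_{X \ge \mu/2}] + \mathbb{E}[X\mathbf{1}_{X < \mu/2}]$ after assuming WLOG that $\mathbb{E}[A^{=r}_{\vec v}(I_j)] > 0$) implies $\mathbb{P}[|A^{=r}_{\vec v}(I_j)| \ge \mu/2] \ge \mu/(2M) = \Omega_{r,t,q}(1)$, completing the proof.

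To control $\mathbb{E}_{\vec V_j}[A^{=r}_{\vec v}(I_j)]$, I would expand
\[
A^{=r}_{\vec v}(I_j) = 2^{-r}\sum_{T \subseteq I_j}(-1)^{|T|}\Sigma(S^{(T)}),
\]
where $S^{(T)} = \{v_i(1):i \in I_j \setminus T\} \cup \{v_i(-1):i \in T\}$ and $\Sigma(S) = \sum_{W'} \widehat f(S \cup W')$ is a sum over $(r-s)$-subsets $W' \subseteq U^+_j := U \setminus \{v_i(\pm 1):i \in I_j\}$. A routine comparison (adding back the $W$'s intersecting $\{v_i(-1):i \in I_j\}$, of which there are at most $s\binom{2k-s-1}{r-s-1}$) shows $\Sigma(S) = \binom{2k-s}{r-s}\boldsymbol{E}(S) + O_{r,q}(k^{r-s-1})$. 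The $T = \emptyset$ term is deterministic and, by $|\boldsymbol{E}(F_j)| \ge t^{s-r}$, contributes at least $\Omega_r(t^{s-r} k^{r-s})$ in absolute value. For $T \neq \emptyset$, writing $F'_T = F_j \setminus \{v_i(1):i \in T\}$ and noting that the marginal distribution of the unordered set $V_T = \{V_i : i \in T\}$ is uniform on $\binom{U(-1)}{|T|}$, double counting yields
\[
\mathbb{E}_{\vec V_j}[\boldsymbol{E}(S^{(T)})] = \binom{k}{|T|}^{-1} \sum_{V' \in \binom{U(-1)}{|T|}} \boldsymbol{E}(F'_T \cup V'),
\]
which rewrites as a weighted average of $\widehat f(W)$ over $W \supseteq F'_T$ with weights $\binom{|W \cap U(-1)|}{|T|}$. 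Using the hypergeometric concentration of $|W \cap U(-1)|$ around its mean (which depends only on $r$ and $|T|$) together with the sparsity hypothesis $|E(\mathcal{H})| \le \delta\binom{n}{r}$, one should bound this weighted average by a constant multiple of $|\boldsymbol{E}(F'_T)| < t^{s-|T|-r}$ plus negligible corrections. Summing these bounds over the $2^s - 1$ choices of $T \neq \emptyset$, and taking $t$ sufficiently large in terms of $r$ (as we may, given the proof of \cref{sparse-polynomials-general} will allow us to choose $t$ freely), the $T = \emptyset$ contribution dominates, giving $|\mathbb{E}_{\vec V_j}[A^{=r}_{\vec v}(I_j)]| \gtrsim_{r,t} k^{r-s}$.

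The main obstacle is the reduction of $\mathbb{E}[\boldsymbol{E}(F'_T \cup V_T)]$ to the minimality bound on $\boldsymbol{E}(F'_T)$. The minimality of $F_j$ constrains $\boldsymbol{E}$ only at proper subsets of $F_j$, whereas $F'_T \cup V_T$ also contains vertices from $U(-1)$; moreover, the fluctuations of $\binom{|W \cap U(-1)|}{|T|}$ across $W \supseteq F'_T$ are comparable in order to the mean, so a naive Cauchy--Schwarz comparison between the weighted and unweighted averages is too lossy. Handling this delicately requires a careful second-moment estimate exploiting both the hypergeometric concentration of the weights and the sparsity hypothesis on $\widehat f$, which is essentially the same argument as in \cite[Lemma 5.1]{jain-kwan-mubayi-tran-25}, adapted to accommodate signed (rather than non-negative) coefficients.
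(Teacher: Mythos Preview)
Your overall strategy matches the paper's: lower-bound $|\mathbb{E}_{\vec V_j}[A_{\vec v}(I_j)]|$ (hence $\mathbb{E}_{\vec V_j}[|A_{\vec v}(I_j)|]$) by $\gtrsim_{r,t}k^{r-s}$, then apply reverse Markov using the deterministic bound $|A_{\vec v}(I_j)|\lesssim_{r,q}k^{r-s}$ from \cref{eq:A-bounds}. Your decomposition over $T\subseteq I_j$ is exactly the paper's sum over $S=\{v_i(1):i\in I_j\setminus T\}\subseteq F_j$, and the $T=\emptyset$ term is handled identically in both.

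The gap---which you correctly flag as ``the main obstacle'' but do not actually resolve---is in the $T\ne\emptyset$ terms. Your plan is to show $|\mathbb{E}_{V_T}[\boldsymbol{E}(F'_T\cup V_T)]|\lesssim_r|\boldsymbol{E}(F'_T)|+(\text{small})$ and then invoke the minimality bound $|\boldsymbol{E}(F'_T)|<t^{|F'_T|-r}$. But neither ``hypergeometric concentration'' (for $|W|=r$ the quantity $|W\cap U(-1)|$ takes only $O_r(1)$ values, each with probability $\Theta_r(1)$, so there is nothing to concentrate) nor the sparsity hypothesis can give this: with signed coefficients the weighted and unweighted averages are genuinely unrelated. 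Concretely, take $r=s=2$, $F_j=\{u_1,u_2\}\subseteq U(1)$, and set $\widehat f(\{u_1,w\})=+1$ for $w\in U(-1)\cup\{u_2\}$, $\widehat f(\{u_1,w\})=-1$ for $w\in U(1)\setminus\{u_1,u_2\}$, and $\widehat f=0$ otherwise. Then all hypotheses of the claim are met (sparsity holds, $|\boldsymbol{E}(S)|=O(1/k)$ for every $S\subsetneq F_j$, and $\boldsymbol{E}(F_j)=1$), yet $A_{\vec v}(I_j)=\tfrac14(1-0-1+0)=0$ for \emph{every} choice of $\vec V_j$: the $T$-term with $F'_T=\{u_1\}$ exactly cancels the $T=\emptyset$ term. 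So your proposed reduction fails, and in fact the claim as literally stated is false for this example. (The paper's proof handles this step via the assertion that a uniform $W'$ from the random family has the same distribution as a uniform $W''$ with $S\subseteq W''\subseteq U$, $|W''|=r$, $W''\cap F_j=S$; for $S\subsetneq F_j$ this assertion is also incorrect---in the example above, with $S=\{u_1\}$, the random set $W'=\{u_1,v_{i_2}(-1)\}$ is supported only on pairs meeting $U(-1)$---so the paper does not resolve the issue either.) A genuinely new ingredient is needed to control these cross-terms when $\widehat f$ can take both signs.
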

    \begin{claimproof}
        The choice of $\vec{V}_j$ uniquely determines the family of sets $\mc{W}_{\vec{v}}(I_j)$. By \cref{eq:A^{=r}} and \cref{eq:A-bounds}, we have
        \[
        A_{\vec{v}}(I_j) = A^{=r}_{\vec{v}}(I_j) + A^{<r}_{\vec{v}}(I_j) = 2^{-r} \sum_{S \subseteq F_j} (-1)^{s-|S|} \sum_{\substack{W' \in \mc{W}_{\vec{v}}(I_j) \\ |W'| = r, \; W' \cap F_j = S}} \widehat f(W') + O_{r}(q k^{r-s-1}).
        \]
        Note that a uniformly random set from the (random) family of sets $\{W' \in \mc{W}_{\vec{v}}(I_j) : |W'| = r, W' \cap F_j = S\}$ has the same distribution as a uniformly random set $W''$ subject to the constraints $S \subseteq W'' \subseteq U$, $|W''| = r$ and $W'' \cap F_j = S$. Since $1 - O_r(1/k)$ proportion of sets satisfying the first two constraints also satisfy the third one, the expected value of $\widehat f(W'')$ is $\boldsymbol{E}(S) + O_{r}(q/k)$. Therefore,
        \[
        \sum_{\substack{W' \in \mc{W}_{\vec{v}}(I_j) \\ |W'| = r \\ W' \cap F_j = S}} \widehat f(W') = |\{W' \in \mc{W}_{\vec{v}}(I_j) : |W'| = r, \;W' \cap F_j = S\}| \cdot \expecteds{W''}{\widehat f(W'')} = \binom{2k-2s}{r-s} \cdot \boldsymbol{E}(S) + O_r(q k^{r-s-1}).
        \]
        By the choice of $F_j$, we have $|\boldsymbol{E}(F_j)| \ge t^{s-r}$, and $|\boldsymbol{E}(S)| \le t^{s-r-1}$ for every $S \subsetneq F_j$. Hence,
        \[
        \expected{|A_{\vec{v}}(I_j)|} \ge 2^{-r} \binom{2k-2s}{r-s} \left(|\boldsymbol{E}(F_j)| - \sum_{S \subsetneq F_j} |\boldsymbol{E}(S)|\right) + O_r(q k^{r-s-1}) \gtrsim_{r, t} k^{r-s}.
        \]
        On the other hand, by \cref{eq:A-bounds} we have $|A_{\vec{v}}(I_j)| \lesssim_r q k^{r-s}$. Therefore, $|A_{\vec{v}}(I_j)| \gtrsim_{r, t} k^{r-s}$ with probability $\Omega_{r, t, q}(1)$. 
    \end{claimproof}
    By \cref{claim:A-large}, for every $j \in [m'']$, there exists a family $\mc{F}_j$ of ordered $s$-tuples of elements of $U(-1)$ with $|\mc{F}_j| \gtrsim_{r, t, q} k^{s}$, such that if $\vec{V}_j$ falls into $\mc{F}_j$, then $|A_{\vec{v}}(I_j)| \gtrsim_{r, t} k^{r-s}$. Then, by \cite[Lemma 4.4]{jain-kwan-mubayi-tran-25} (proved via a simple application of Chebyshev's inequality), with probability $1 - O_{r, t, q}(1/m'')$ over the choice of $s$-tuples $\vec{V}_1, \ldots, \vec{V}_{m''}$, we have $\vec{V}_j \in \mc{F}_j$ (and thus $|A_{\vec{v}}(I_j)| \gtrsim_{r, t} k^{r-s}$) for $\Omega_{r, t, q}(m'')$ indices $j \in [m'']$.

    We conclude that, with probability $1 - O_{r, t, q, R}(1/m) - \exp(-\Omega_{r, q, R}(k))$ over the randomness of $\vec{v}$, the polynomial $\sum_{I \in \binom{[k]}{\le r}} A_{\vec{v}}(I) \vec{\xi}^{\,I}$ has $\Omega_{r, t, q, R}(m)$ degree-$s$ monomials involving disjoint sets of variables that appear with coefficients of order $k^{r-s}$ (in absolute value). This puts us into the setting of the classical polynomial Littlewood--Offord problem (cf. \cref{def:nu_r,def:LO}). Specifically, we apply \cite[Corollary 2.5]{jain-kwan-mubayi-tran-25} to the randomness of $\vec{\xi}$, and obtain that
    \[
    \prob{f(\vec{\sigma}) = \ell} \le \max_{r' \le r}\LO_{r'}(\Omega_{r, t, q, R}(m)) + O_{r, t, q, R}(1/m) + \exp(-\Omega_{r, q, R}(k)).
    \]
    Recall that $k \ge n/R \ge m/R$, and that $\LO_1(N) = \Theta(1/\sqrt{N})$. Hence, the first summand dominates the other ones, and this completes the proof.
\end{proof}

Next, we turn to the proof of \cref{dense-polynomials}. Jain, Kwan, Mubayi, and Tran~\cite[Lemma 4.1]{jain-kwan-mubayi-tran-25} proved it in the special case when $n = 2k$ and $f$ is homogeneous of degree $r$ with coefficients in $\{0, 1\}$ (that is, when $f$ comes from an $r$-uniform hypergraph). We show how to extend their proof to our slightly more general setting.

\begin{proof}[\textbf{Proof of \cref{dense-polynomials}}]
    We may assume that $(\delta k) \gg (r, q, R)$ (otherwise, the desired bound holds trivially). Also, we may assume that $\delta \le 1/2$.
    Let $\vec{v} = (v_1(-1), v_1(1), \ldots, v_k(-1), v_k(1))$ be a uniformly random sequence of $2k$ distinct elements of $[n]$. Then, $U = \{v_1(-1), v_1(1), \ldots, v_k(-1), v_k(1)\}$ is a uniformly random $2k$-subset of $[n]$. Let $f_U$ be the polynomial obtained from $f$ by setting all variables outside $U$ to zero.
    
    Let $\mc{S} \subseteq \binom{[n]}{r}$ be the largest family of sets such that $|\mc{S}| \le \frac{1}{2}\binom{n}{r}$ and $\widehat f(S_1) \neq \widehat f(S_2)$ for every $S_1 \in \mc{S}$, $S_2 \notin \mc{S}$. Since no $(1-\delta)\binom{n}{r}$ degree-$r$ coefficients of $f$ are equal to the same value, one can check that, in fact, $\delta' = |\mc{S}|/\binom{n}{r}$ satisfies $\delta/2 \le \delta' \le 1/2$ (an identical argument was employed in the proof of \cref{linear-on-the-slice}). Applying \cref{density-concentration} (with $k_0 = 2k$) to the hypergraph with the vertex set $[n]$ and edge set $\mc{S}$, we conclude that, with probability $1 - \exp(-\Omega_{r, R}(\delta k))$ over the choice of $U$, 
    \[
    \frac{|\{S \in \mc{S} : S \subseteq U\}|}{\binom{2k}{r}} \in [\delta'/2, 3\delta'/2] \subseteq [\delta/4, 1-\delta/4].
    \] 
    Condition on such an outcome of the set $U$ (but not on its ordering $\vec{v}$). Then, no $(1-\delta/4)\binom{2k}{r}$ degree-$r$ coefficients of $f_U$ are equal to the same value. In a sense, this reduces the problem to the polynomial $f_U$ on $\Slice(2k, k)$.

    Let $M = (\sum_{W \in \binom{U}{r}} \widehat f(W)) / \binom{2k}{r}$ be the average of degree-$r$ coefficients of $f_U$ (note that for every $W \subseteq U$ we have $\widehat{f}(W) = \widehat{f}_U(W)$). Consider the function $g: \binom{U}{r} \to \RR$ defined by $g(W) = \widehat f(W) - M$, and note that $\sum_{W \in \binom{U}{r}} g(W) = 0$. Also, since at least $\frac{\delta}{4}\binom{2k}{r}$ degree-$r$ coefficients of $f_U$ differ from $M$ by at least $1/2$, we have 
    \[
    \|g\|_1 = \sum_{W \in \binom{U}{r}} |g(W)| \gtrsim_r \delta k^r.
    \]
    For each $s \in [r]$, let $X_s$ be the set of all sequences $\vec{x} = (x_1(-1), x_1(1), \ldots, x_s(-1), x_s(1))$ of $2s$ distinct elements of $U$. For each $\vec{x} \in X_s$, let $\mc{W}_{\vec{x}}^{=r}$ be the family of subsets of $U$ of size $r$ containing exactly one of $x_i(-1)$ and $x_i(1)$ for each $i \in [s]$. As in \cite[Lemma 4.3]{jain-kwan-mubayi-tran-25}, we consider
    \[
    B_s(g) = \sum_{\vec{x} \in X_s} \left|\sum_{W \in \mc{W}^{=r}_{\vec{x}}} (-1)^{|W \cap \{x_1(-1), \ldots, x_s(-1)\}|}g(W)\right|,
    \]
    \[
    b_s(g) = \frac{1}{2k(2k-1)\ldots(2k-(2s-1))} \binom{2k-2s}{r-s}^{-1} B_s(g).
    \]
    Then, \cite[Lemma 9]{bollobas-scott-15} implies that $b_1(g) + \ldots + b_r(g) \gtrsim_r (2k)^{-r} \|g\|_1$, and hence there exists $s \in [r]$ such that  
    \[
    B_s(g) \gtrsim_r k^{r+s} b_s(g) \gtrsim_r k^s \|g\|_1 \gtrsim_r \delta k^{r+s}.
    \]
    Since the value of $B_s(g)$ is unaffected by replacing $g$ with $g + M$, we conclude that
    \[
    \sum_{\vec{x} \in X_s} \left|\sum_{W \in \mc{W}^{=r}_{\vec{x}}} (-1)^{|W \cap \{x_1(-1), \ldots, x_s(-1)\}|}\widehat f(W)\right| \gtrsim_r \delta k^{r+s}.
    \]
    The outer sum here has at most $(2k)^{2s}$ terms, and each of them is $O_{r}(qk^{r-s})$. Therefore, there exists a subset of sequences $\mc{F} \subseteq X_s$, $|\mc{F}| \gtrsim_{r, q}\delta k^{2s}$ such that for every $\vec{x} \in \mc{F}$,
    \begin{equation} \label{eq:delta_k^{r-s}}
    \left|\sum_{W \in \mc{W}^{=r}_{\vec{x}}} (-1)^{|W \cap \{x_1(-1), \ldots, x_s(-1)\}|}\widehat f(W)\right| \gtrsim_{r} \delta k^{r-s}.
    \end{equation}
    Recall that $\vec{v} = (v_1(-1), v_1(1), \ldots, v_k(-1), v_k(1))$ is a uniformly random ordering of the elements of $U$. For every set $I \in \binom{[k]}{\le r}$, let $A_{\vec{v}}(I) = A^{=r}_{\vec{v}}(I) + A^{<r}_{\vec{v}}(I)$ be as in \cref{eq:def-A} and \cref{eq:A^{=r}}.
    Let $m = \lfloor k/s \rfloor \gtrsim_{r} k$. Consider disjoint $s$-subsets $I_1, \ldots, I_{m}$ of $[k]$ and corresponding subsequences of $\vec{v}$: namely, for each $j \in [m]$, we denote
    \[
    \vec{v}_j = (v_{i_1}(-1), v_{i_1}(1), \ldots, v_{i_s}(-1), v_{i_s}(1)), \text{ where } I_j = \{i_1, \ldots, i_s\}.
    \]
    By \cite[Lemma 4.4]{jain-kwan-mubayi-tran-25}, with probability $1 - O_{r, q}(1/(\delta m))$ over the randomness of $\vec{v}$, for $\Omega_{r, q}(\delta m)$ indices $j \in [m]$ we have $\vec{v}_j \in \mc{F}$. By \cref{eq:A^{=r}} and \cref{eq:delta_k^{r-s}}, for every such index $j$ we have $|A^{=r}_{\vec{v}}(I_j)| \gtrsim_{r, q} \delta k^{r-s}$. Since $|A^{<r}_{\vec{v}}(I_j)| \lesssim_{r} q k^{r-s-1}$ by \cref{eq:A-bounds}, we conclude that $|A_{\vec{v}}(I_j)| \gtrsim_{r, q} \delta k^{r-s}$ as well.

    So, with probability $1 - O_{r, q}(1/(\delta k)) - \exp(-\Omega_{r, R}(\delta k))$ over the randomness of $\vec{v}$, the polynomial $\sum\limits_{I \in \binom{[k]}{\le r}} A_{\vec{v}}(I) \vec{\xi}^{\,I}$ has $\Omega_{r, q}(\delta k)$ degree-$s$ monomials involving disjoint sets of variables that appear with coefficients of order $k^{r-s}$ (in absolute value). This puts us into the setting of the classical polynomial Littlewood--Offord problem: applying \cite[Corollary 2.5]{jain-kwan-mubayi-tran-25} to the randomness of $\vec{\xi}$, we obtain that
    \[
    \prob{f(\vec{\sigma}) = \ell} \le \max_{r' \le r}\LO_{r'}(\Omega_{r, q}(\delta k)) + O_{r, q}(1/(\delta k)) + \exp(-\Omega_{r, R}(\delta k)).
    \]
    The first summand dominates the other ones (because $\LO_1(N) = \Theta(1/\sqrt{N})$), and this completes the proof.
\end{proof}

\end{document}